\newcommand{\Id}{{\rm Id}}
\newcommand{\quo}{{\rm quo}}
\newcommand{\rk}{{\rm rk}}
\newcommand{\codim}{{\rm codim}}
\newcommand{\Bs}{{\rm Bs}}
\newcommand{\pr}{{\rm pr}}
\newtheorem{lemma1}{}[section]
\newenvironment{lemma}{\begin{lemma1}{\bf Lemma.}}{\end{lemma1}}
\newenvironment{theorem}{\begin{lemma1}{\bf Theorem.}}{\end{lemma1}}
\newenvironment{proposition}{\begin{lemma1}{\bf Proposition.}}{\end{lemma1}}
\newenvironment{corollary}{\begin{lemma1}{\bf Corollary.}}{\end{lemma1}}
\newenvironment{remark}{\begin{lemma1}{\bf Remark.}\rm}{\end{lemma1}}
\newenvironment{definition}{\begin{lemma1}{\bf Definition.}}{\end{lemma1}}
\newenvironment{setup}{\begin{lemma1}{\bf Setup.}}{\end{lemma1}}
\newenvironment{conjecture}{\begin {lemma1}{\bf Conjecture.}}{\end{lemma1}}
\newenvironment{remark*}{{\bf Remark.}}{}
\newenvironment{example*}{{\bf Example.}}{}
\newcommand{\Q}{\ensuremath{\mathbb{Q}}}
\newcommand{\C}{\ensuremath{\mathbb{C}}}
\newcommand{\N}{\ensuremath{\mathbb{N}}}
\newcommand{\PP}{\ensuremath{\mathbb{P}}}
\newcommand{\holom}[3]{\ensuremath{#1:#2  \rightarrow #3}}
\newcommand{\fibre}[2]{\ensuremath{#1^{-1} (#2)}}
\newcommand\sE{{\mathcal E}}
\newcommand\sF{{\mathcal F}}
\newcommand\sI{{\mathcal I}}
\newcommand\sO{{\mathcal O}}
\newcommand\sV{{\mathcal V}}
\newcommand\bQ{{\mathbb Q}}
\newcommand\bN{{\mathbb N}}
\newcommand{\chow}[1]{\ensuremath{\mathcal{C}(#1)}}
\newcommand{\ep}[0]{{\epsilon}}
\newcommand{\supp}[0]{\operatorname{Supp}}
\newcommand{\Alb}[0]{\operatorname{Alb}} 
\newcommand{\End}[0]{\operatorname{End}}
\newcommand{\Univ}[0]{\mathcal U}
\title{A decomposition theorem for projective manifolds with nef anticanonical bundle}
\date{June 27, 2017}
\subjclass[2010]{14D06, 14J40, 14E30, 32J25, 32J27}
\keywords{anticanonical bundle, positivity of direct images, MRC fibration}
\author{Junyan Cao}
\author {Andreas H\"oring}
\address{Andreas H\"oring, Universit\'e C\^ote d'Azur, CNRS, LJAD, France}
\email{Andreas.Hoering@unice.fr}
\address{Junyan Cao, Universit\'e Paris 6 \\
Institut de Math\'{e}matiques de Jussieu\\
4, Place Jussieu, Paris 75252, France }
\email{junyan.cao@imj-prg.fr}
\begin{document}

\begin{abstract} 
Let $X$ be a simply connected projective manifold with nef anticanonical bundle.
We prove that $X$ is a product of a rationally connected manifold and a manifold with 
trivial canonical bundle. As an application we describe the MRC fibration of
any projective manifold with nef anticanonical bundle. 
\end{abstract}

\maketitle


\section{Introduction}

\subsection{Main results}

Manifolds with nef anticanonical class naturally appear as an interpolation of Fano manifolds
and compact K\"ahler manifolds with trivial canonical class, but they come with many new features: 
in general the anticanonical bundle is not semiample (blow-up $\PP^2$ in nine general points),
nor hermitian semipositive \cite[Ex.1.7]{DPS94}.
Initiated by the fundamental papers of Demailly, Peternell and Schneider \cite{DPS93, DPS94, DPS96},
a central goal of the theory is to understand the natural maps attached to $X$, e.g. the Albanese map or the MRC-fibration.
The expected outcome of this study is contained in the following conjecture which should be understood
as an analogue of the Beauville-Bogomolov decomposition theorem \cite{Bea83}:

\begin{conjecture}\label{newcon}
Let $X$ be a compact K\"{a}hler manifold with nef anticanonical class. 
Then the universal cover $\widetilde{X}$ of $X$ decomposes as a product
$$
\widetilde{X} \simeq \mathbb{C}^q \times \prod Y_j \times \prod S_k \times Z,
$$
where $Y_j$ are irreducible Calabi-Yau manifolds, $S_k$ are irreducible hyperk\"{a}hler manifolds,
and $Z$ is a rationally connected manifold.
\end{conjecture}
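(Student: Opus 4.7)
The plan is to reduce Conjecture \ref{newcon} to the simply connected projective case (the main theorem of this paper) together with the Beauville--Bogomolov decomposition for manifolds with trivial canonical class. There are three successive reductions: first, extract the flat factor $\mathbb{C}^q$ using the Albanese map of $X$; second, split off the rationally connected factor $Z$ from a simply connected fiber of the Albanese; third, apply Beauville--Bogomolov to the remaining factor with trivial canonical bundle.

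For the first step, the key input is that for a compact K\"ahler manifold with nef $-K_X$, the Albanese map $\alpha : X \to \Alb(X)$ is a surjective submersion, and in fact a locally constant holomorphic fibration, following Demailly--Peternell--Schneider and its subsequent refinements. Lifting $\alpha$ to universal covers then yields a splitting $\widetilde{X} \simeq \mathbb{C}^q \times \widetilde{F}$, where $F$ is any fiber of $\alpha$. By adjunction $-K_F$ remains nef, and by construction $q(F) = 0$.

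For the second step, the universal cover $\widetilde{F}$ is simply connected with $-K_{\widetilde{F}}$ nef. The main theorem of this paper (in the projective case) then yields a splitting $\widetilde{F} \simeq Z \times Y$ with $Z$ rationally connected and $K_Y$ trivial. For the third step, $Y$ is simply connected K\"ahler with $K_Y$ trivial, so the Beauville--Bogomolov decomposition gives $Y \simeq \prod Y_j \times \prod S_k$ with $Y_j$ irreducible Calabi--Yau and $S_k$ irreducible hyperk\"ahler, completing the argument modulo the simply connected case.

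The hardest part is the second step, which is proved here only in the projective setting: one must show that the MRC fibration $X \dashrightarrow Y$ is everywhere defined and submersive, and moreover splits as a product whenever $\pi_1(X) = 0$. Smoothness and the fact that $K_Y \equiv 0$ rest on positivity of direct images of relative pluricanonical bundles (Berndtsson--P\u{a}un and refinements) combined with a theorem of Qi Zhang on the base of the MRC fibration. The product splitting is much more delicate and must use the triviality of $\pi_1$ in an essential way; presumably one analyses the monodromy of the smooth MRC fibration and argues that the associated variation of Hodge structure is constant, forcing an isotriviality that upgrades to a splitting on the simply connected cover. The remaining obstacle to the full conjecture is to extend this second step to the non-projective K\"ahler setting, where even the existence of a sufficiently well-behaved MRC fibration is still open.
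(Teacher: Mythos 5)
Note first that the statement you are proving is labelled a \emph{conjecture} in the paper; the paper establishes it only when $X$ is projective, where it is stated as an immediate Corollary of Theorem \ref{theoremmain} combined with \cite[Cor.1.4]{Cao16}, and the compact K\"ahler case is left open. Your three--step reduction (split off $\mathbb{C}^q$ via the Albanese, split off the rationally connected factor via Theorem \ref{theoremmain}, then apply Beauville--Bogomolov) is exactly the intended derivation of that projective Corollary, and you correctly locate the remaining obstacle in extending these inputs to the non--projective K\"ahler setting.

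Two points nevertheless deserve closer attention. First, in your step one you pass to $\widetilde X \simeq \mathbb{C}^q \times \widetilde F$ and then apply Theorem \ref{theoremmain} to $\widetilde F$; this requires $\widetilde F$ to be a \emph{compact} projective manifold, i.e.\ you need $\pi_1(F)$ to be finite. That does not follow from $q(F)=0$ alone and must be extracted from \cite[Cor.1.4]{Cao16}, which gives (after a finite \'etale cover) a locally trivial Albanese fibration whose fibre has finite fundamental group; that statement, rather than the bare submersivity of the Albanese map, is what the argument needs. Second, your guess at how the key splitting in Theorem \ref{theoremmain} is actually proved --- analysing the monodromy of the smooth MRC fibration and arguing that the associated variation of Hodge structure is constant --- does not reflect the paper's method. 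There is no Hodge--theoretic isotriviality step; instead the paper works with direct image sheaves $\sV_p$ over an open set $Y_0$ of a resolution of the relevant Chow component, shows they are weakly positively curved with vanishing first Chern class, hence numerically flat and (using $\pi_1=1$) trivial, and from this deduces a birational product structure $Y_0\times F$. This birational information is then converted into a splitting $T_X\simeq V_1\oplus V_2$ of the tangent bundle, and the global product decomposition follows from the Ehresmann theorem applied to the integrable complement $V_2$. The role of simple connectedness is to trivialise the numerically flat bundles and to pass from Ehresmann's universal--cover statement to $X$ itself, not to kill monodromy of a Hodge structure.
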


This conjecture has been proven under the 
stronger assumption that $T_X$ is nef \cite{CP91, DPS94}, $-K_X$ is hermitian semipositive \cite{DPS96, CDP15} 
or the general fibre of the Albanese map is weak Fano \cite{CH17}.

In this paper we focus on the case where $X$ is a projective manifold. 
Very recently the first-named author proved that for any projective manifold $X$ with nef anticanonical bundle
the Albanese map $X \rightarrow \mbox{Alb}(X)$ is a locally trivial fibration \cite{Cao16}.
We know that the fundamental group is almost abelian \cite{Pau97}, so the next step is to study $X$ when it is simply connected. 
We show that the structure of $X$ is as simple as possible:

\begin{theorem} \label{theoremmain}
Let $X$ be a projective manifold such that $-K_X$ is nef and $\pi_1(X) = 1$.
Then $X \simeq Y \times F$ such that $K_Y \sim 0$ and $F$ is a rationally connected manifold.
\end{theorem}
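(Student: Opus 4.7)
The plan is to analyse $X$ via its MRC fibration $f \colon X \dashrightarrow Y$, showing successively that $Y$ has numerically trivial canonical class, that $Y$ is simply connected, and finally that $f$ is a trivial holomorphic product. The main technical input is positivity of direct images of relative anti-pluricanonical sheaves, as developed by Cao and P\u{a}un; Cao's local triviality theorem \cite{Cao16} for the Albanese morphism plays an auxiliary role in the background.

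First, pass to a smooth birational model on which the MRC fibration becomes an honest morphism $f \colon X' \to Y$, with $Y$ smooth projective. The target $Y$ is not uniruled by Graber--Harris--Starr, hence $K_Y$ is pseudo-effective by the Boucksom--Demailly--P\u{a}un--Peternell theorem. In the other direction, the general fibre $F$ is rationally connected and $-K_X$ is nef, so $-K_F$ is nef; a direct-image positivity argument then yields that $f_*\sO_{X'}(-mK_{X'/Y})$ is weakly positive for $m$ sufficiently divisible, and pushing forward a suitable top power shows that $-K_Y$ is pseudo-effective as well. Both signs of $K_Y$ being pseudo-effective forces $K_Y \equiv 0$. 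A separate argument, exploiting that $-K_{X'}$ cannot be negative on any curve contracted by $X' \to X$, should then show that no blow-ups were necessary and that $f$ is already a morphism on $X$.

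Since $X$ is simply connected and $f \colon X \to Y$ has rationally connected (hence simply connected) fibres, Koll\'{a}r's theorem on fundamental groups of fibrations with rationally connected fibres yields $\pi_1(Y) = 1$. Applied to the projective manifold $Y$ with $K_Y \equiv 0$, the Beauville--Bogomolov decomposition forces $Y$ to be a product of irreducible Calabi--Yau and hyperk\"{a}hler factors (no abelian factor can appear by simple connectedness), and in particular $K_Y \sim 0$ rather than merely numerically trivial.

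The hardest step is to promote the smooth fibration $f \colon X \to Y$ to a holomorphic product. The approach is to prove that the direct images $\sE_m := f_*\sO_X(-mK_{X/Y})$ are numerically flat vector bundles on $Y$: by Cao--P\u{a}un they are pseudo-effective, while the nefness of $-K_X$ together with $K_Y \equiv 0$ constrains the determinants $\det \sE_m$ to be numerically trivial, giving numerical flatness. On the simply connected base $Y$, numerically flat vector bundles are holomorphically trivial, hence $\sE_m \simeq \sO_Y^{\oplus r_m}$ for every sufficiently divisible $m$. The sections of $-mK_{X/Y}$ together with $f$ then produce a morphism $X \to Y \times \PP^{N_m}$ which is birational onto its image $Y \times F$ and, after a rigidity/normalisation step for the rationally connected fibre, realises $X \simeq Y \times F$. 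I expect the main obstacle to be establishing the numerical flatness of $\sE_m$: this hinges on the precise balance between nefness of $-K_X$ upstairs and numerical triviality of $K_Y$ downstairs, and is exactly where the refined positivity-of-direct-images techniques must be combined with Hodge-theoretic rigidity arguments with care.
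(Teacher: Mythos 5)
Your general strategy (positivity of direct images, numerical flatness, trivial bundles on a simply connected base) is in the right spirit and indeed close to techniques used in the paper, but there is a central unjustified step that the paper itself flags as the key difficulty. You write that an argument ``exploiting that $-K_{X'}$ cannot be negative on any curve contracted by $X' \to X$ should then show that no blow-ups were necessary and that $f$ is already a morphism on $X$.'' The introduction of the paper explicitly warns that ``even by using techniques from the MMP, it seems a priori not clear that the MRC-fibration can be represented by a \emph{holomorphic} map $X \to Y$,'' and no such reduction is carried out. Your hand-wave here is exactly the gap the whole architecture of the paper is designed to circumvent. Once this step fails, your subsequent steps presuppose a smooth holomorphic fibration $f:X \to Y$ onto a smooth projective $Y$, and so Koll\'ar's theorem on $\pi_1$, the Beauville--Bogomolov decomposition applied to $Y$, the analysis of $f_\star \sO_X(-mK_{X/Y})$ directly on $X$, and the conclusion $X \simeq Y\times F$ all lack a valid starting point.

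What the paper does instead: it never produces a holomorphic MRC fibration on $X$. It works on a resolution $\Gamma \to X$ of the universal family, with a fibration $\varphi : \Gamma \to Y$ onto a resolution $Y$ of the Chow-space component. The direct images one can control are $\varphi_\star(\sO_\Gamma(pL + pmE - \tfrac{p}{r}\varphi^\star\det\varphi_\star(\sO_\Gamma(L+mE))))$ for $L=\pi^\star A$ and $E$ the $\pi$-exceptional divisor --- so they involve the ample line bundle $A$ and the exceptional divisor, not $-K_{X/Y}$, because the fibration lives on $\Gamma$ rather than $X$. These bundles are shown to be trivial only over a suitable Zariski open $Y_0 \subset Y$ (not all of $Y$), chosen so that the complement of $\varphi^{-1}(Y_0)\setminus E$ in $X$ has codimension $\geq 2$. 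This yields a birational product structure $\varphi^{-1}(Y_0) \dashrightarrow Y_0 \times F$, and the path from ``birational product on a model'' to ``biregular product on $X$'' goes through an induced splitting $T_X \simeq V_1 \oplus V_2$ on all of $X$ (extending across codimension two), then a split-tangent-bundle argument (H\"oring's result plus the Ehresmann theorem). Your analysis of the direct image sheaves and the role of numerical flatness is the right intuition for the technical core, but you have shifted the hard part to a step that you assume away, and you have also not accounted for the birational-to-biregular passage at the end, which in the paper is handled by the tangent-bundle splitting rather than by a direct examination of the morphism to $Y \times \PP^{N}$.
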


By \cite[Cor.1.4]{Cao16} this immediately implies:

\begin{corollary}
Conjecture \ref{newcon} holds if $X$ is projective.
\end{corollary}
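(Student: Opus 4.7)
The plan is to derive the corollary by combining Theorem \ref{theoremmain} with two previously known ingredients: the étale reduction of \cite[Cor.~1.4]{Cao16} and the classical Beauville-Bogomolov decomposition \cite{Bea83}.

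I would begin by recalling that for a projective manifold $X$ with $-K_X$ nef, \cite{Cao16} shows the Albanese map $\alpha : X \to A := \Alb(X)$ is a locally trivial fibration, with fibre a projective manifold $F$ satisfying $-K_F$ nef. The statement \cite[Cor.~1.4]{Cao16} refines this: after replacing $X$ by a suitable finite étale cover $X'$, one has $X' \simeq F' \times A'$, with $A' \to A$ an isogeny of abelian varieties and $F'$ a simply connected projective manifold with nef anticanonical bundle. The production of such an $X'$ rests on the locally trivial Albanese combined with Paun's theorem \cite{Pau97} that $\pi_1(X)$ is almost abelian, used first to split off the abelian part and then to trivialise $\pi_1$ of the fibre after a further étale cover (ensuring in particular $\Alb(F')=0$).

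Next I would apply Theorem \ref{theoremmain} to $F'$, yielding $F' \simeq Y \times Z$ with $K_Y \sim 0$ and $Z$ rationally connected. Since $F'$ is simply connected, so are both $Y$ and $Z$; in particular $Y$ is a simply connected projective manifold with trivial canonical class. The classical Beauville-Bogomolov decomposition then gives $Y \simeq \prod_j Y_j \times \prod_k S_k$, with the $Y_j$ irreducible Calabi-Yau manifolds and the $S_k$ irreducible hyperk\"ahler manifolds. Finally, the universal cover of $X$ coincides with that of the finite étale cover $X' \simeq F' \times A'$, which (since $F'$ is simply connected) equals $F' \times \mathbb{C}^q$. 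Assembling the pieces,
\[
\widetilde X \;\simeq\; \mathbb{C}^q \times \prod_j Y_j \times \prod_k S_k \times Z,
\]
as predicted by Conjecture \ref{newcon}.

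The only substantive input is Theorem \ref{theoremmain}; the remainder of the argument is formal. The one step worth highlighting is the reduction from a general projective $X$ to a simply connected fibre of the Albanese, which is the content of \cite[Cor.~1.4]{Cao16} and which relies crucially on the locally trivial Albanese together with Paun's almost-abelianness theorem. Once that reduction is granted, Theorem \ref{theoremmain} slots directly into place and no further geometric work is needed.
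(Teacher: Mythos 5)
Your proposal is correct and follows exactly the route the paper intends: the paper's entire proof of this corollary is the phrase ``By \cite[Cor.1.4]{Cao16} this immediately implies,'' and you have simply written out the combination of \cite[Cor.1.4]{Cao16}, Theorem \ref{theoremmain}, and Beauville--Bogomolov that this phrase compresses. The only cosmetic caveat is that \cite[Cor.1.4]{Cao16} (as the authors themselves invoke it later in the proof of Theorem \ref{theoremMRC}) produces a locally trivial Albanese whose fibre has \emph{finite} fundamental group rather than directly a simply connected fibre; one passes to a further finite \'etale cover of that fibre (harmless, since it preserves nefness of $-K$ and does not alter $\widetilde{X}$) before applying Theorem \ref{theoremmain}, which is what your argument does implicitly.
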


Moreover we obtain a precise description of the MRC-fibration:

\begin{theorem} \label{theoremMRC}
Let $X$ be a projective manifold such that $-K_X$ is nef. Then there exists a locally trivial fibration
$$
X \rightarrow Y
$$
such that the fibre $F$ is rationally connected and $K_Y \equiv 0$. 
\end{theorem}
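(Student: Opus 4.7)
The plan is to reduce to the simply connected setting provided by Theorem~\ref{theoremmain}. By P\u{a}un \cite{Pau97} the group $\pi_1(X)$ is virtually abelian; combining this with \cite[Cor.\ 1.4]{Cao16} one passes to a finite \'etale cover $X_1 \to X$ whose Albanese map $a: X_1 \to A$ is a locally trivial fibration with simply connected fibre $F$. Local triviality forces $-K_F = -K_{X_1}|_F$ to be nef, so Theorem~\ref{theoremmain} applies to $F$ and yields $F \simeq Y_0 \times Z$ with $K_{Y_0}\sim 0$ and $Z$ rationally connected.

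Since the MRC fibration $F \to Y_0$ is canonical, every biholomorphism of $F$ descends to a biholomorphism of $Y_0$, so the transition data of the $F$-bundle $a$ induce compatible transition data of $Y_0$- and $Z$-bundles. This produces a factorisation
\[
X_1 \;\longrightarrow\; Y_1 \;\xrightarrow{a'}\; A
\]
in which $a'$ is a locally trivial $Y_0$-bundle and $X_1 \to Y_1$ is a locally trivial $Z$-bundle. Because $K_{Y_0}\sim 0$, the restriction of $K_{Y_1}$ to every $Y_0$-fibre of $a'$ is trivial; a see-saw argument therefore identifies $K_{Y_1}$ with the pullback $(a')^{*}L$ of a line bundle $L$ on $A$ whose transition functions are the characters by which the transition automorphisms of $a'$ act on $H^{0}(Y_{0}, K_{Y_{0}})$. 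This presents $L$ as a flat line bundle on $A$, so $c_{1}(L) = 0$ and hence $K_{Y_1} \equiv 0$.

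Let $G$ denote the finite Galois group of $X_1 \to X$. By canonicity of the MRC the free $G$-action on $X_1$ descends to an action on $Y_1$. If some $g \in G$ fixed $y \in Y_1$, it would restrict to a finite-order automorphism of the rationally connected fibre $Z_y$ of $X_1 \to Y_1$ over $y$, and the holomorphic Lefschetz formula together with $\chi(\O_{Z_y}) = 1$ would produce a fixed point of $g$ on $X_1$, contradicting freeness. Hence $G$ acts freely on $Y_1$; the quotient $Y := Y_1/G$ is a smooth projective manifold with $K_Y \equiv 0$, and the induced map $X \to Y$ is the desired locally trivial fibration with rationally connected fibre~$Z$.

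The most delicate step is the globalisation: passing from the pointwise splitting $F \simeq Y_0 \times Z$ of Theorem~\ref{theoremmain} to an honest pair of locally trivial fibrations $X_1 \to Y_1 \to A$ with $K_{Y_1} \equiv 0$. Canonicity of the MRC supplies the bundle structure essentially for free, and recognising $K_{Y_1}$ as a flat pullback from $A$ produces the numerical triviality; once this is in place, the final descent by $G$ is a routine fixed-point argument.
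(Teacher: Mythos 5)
Your argument takes a genuinely different and more constructive route than the paper's. The paper works directly on $X$: it shows the MRC-foliation $\sF\subset T_X$ is a subbundle (reducing by \'etale invariance to the product case provided by Theorem~\ref{theoremmain}), integrates it into a smooth morphism $\varphi:X\to Y$ via \cite[Cor.2.11]{a1}, obtains $K_Y\equiv 0$ from nefness of $-K_Y$ together with non-uniruledness of $Y$, and finishes with Fischer--Grauert once all $\varphi$-fibres are seen to be biholomorphic. You instead descend from the top: build $Y_1$ as a $Y_0$-bundle over $A$ using the product structure of the Albanese fibre and MRC-canonicity, establish $K_{Y_1}\equiv 0$ by see-saw, and quotient by the Galois group. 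Both approaches rest on \cite{Cao16} and Theorem~\ref{theoremmain}, but yours makes the globalisation of the product explicit rather than encoding it in the integrability of a foliation.

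Three points need to be supplied. (a) As used in the paper, \cite[Cor.1.4]{Cao16} gives an Albanese fibre with only \emph{finite} fundamental group; to make it simply connected you should pass, after P\u{a}un's reduction to abelian $\pi_1$, to the further \'etale cover corresponding to the torsion-free quotient and then invoke the homotopy sequence of the Albanese map (using that $\pi_2$ of a torus vanishes). (b) For the see-saw line bundle $L$ to be flat you need its transition functions $\chi_{ij}$ to be \emph{locally constant}; this holds because the character $\aut(Y_0)\to\C^*$ given by the action on the line $H^0(Y_0,K_{Y_0})$ annihilates the identity component of $\aut(Y_0)$ (a complex torus carries no nonconstant holomorphic characters) and therefore factors through the discrete group $\pi_0(\aut(Y_0))$; this deserves a sentence, since without it $\chi_{ij}$ is merely holomorphic and $L$ could in principle have nonzero $c_1$. (c) The total space $Y_1$ of a holomorphic $Y_0$-bundle over $A$ is a priori only a compact complex manifold; you should identify it with the normalised image of $X_1$ in the Chow space parametrising the MRC-leaves, or invoke \cite[Cor.2.11]{a1} as the paper does, to guarantee projectivity before taking the $G$-quotient.
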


Using Mori theory and explicit computations this statement was shown for threefolds in \cite[Sect.3]{BP04}.
For K\"ahler manifolds even this low-dimensional case is still open.

\subsection{Strategy and organisation of the paper}
By a result of Qi Zhang \cite{Zha05} the base of 
the MRC fibration has Kodaira dimension zero, so the situation 
of Theorem \ref{theoremmain} looks similar to the case of the Albanese fibration
studied in \cite{Cao16}. However, even by using techniques from the MMP, it seems a priori not clear that the MRC-fibration
can be represented by a {\em holomorphic} map $X \rightarrow Y$. Therefore we have to proceed in a less direct way:
the MRC-fibration is an almost holomorphic map, so it determines a unique component of the Chow space parametrising
its general fibres. We denote by $Y$ a resolution of this component, and by $\varphi: \Gamma \rightarrow Y$ a resolution 
of the universal family over this component. 

The natural map $\pi: \Gamma \rightarrow X$ is birational with exceptional locus $E$, and our first goal is to describe the structure of the fibre space $\varphi: \Gamma \rightarrow Y$.  
The tool for this description is the positivity of certain direct image sheaves: for some sufficiently ample line bundle $A$
on $X$, we set $L:= \pi^\star A$. The main technical result of this paper (Proposition \ref{propositiontrivial}) is that for some $m \gg 0$ and all $p$ sufficiently divisible, the direct image sheaves
$$
\sV_p := \varphi_\star (\sO_\Gamma(p L + p m E - \frac{p}{r} \varphi^\star \det \varphi_\star 
(\sO_\Gamma(L + m E))))
$$ 
are trivial vector bundles over a certain open subset $Y_0 \subset Y$ which is simply connected and has only constant
holomorphic functions (cf. Setup \ref{setup} for the definition of $Y_0$), where $r$ is the rank of $\varphi_\star 
(\sO_\Gamma(L + m E))$.
Following an argument going back to \cite{DPS94} this implies that we have a birational map
$$
\fibre{\varphi}{Y_0} \dashrightarrow Y_0 \times F,
$$
where $F$ is a general fibre of the MRC fibration. 

The second step is to see how the product structure of some birational model yields a product structure on $X$. This  
typically requires some control of the birational map $X \dashrightarrow Y \times F$, in our case we simply observe
that the product structure on $Y_0 \times F$ induces a splitting of the tangent bundle of 
$\fibre{\varphi}{Y_0} \setminus E$. Since (by the construction of $Y_0$) the complement of 
$\fibre{\varphi}{Y_0} \setminus E \subset X$ has codimension at least two, we obtain a splitting of the tangent bundle $T_X$
defining two algebraically integrable foliations. Now standard arguments for manifolds with split tangent bundle yield the theorem.

The technical core of this paper is to study the direct image sheaves on the non-compact quasi-projective set $Y_0 \subset Y$,
making only assumptions of a birational nature on the (relative) anticanonical divisor. This setup has not been studied in earlier papers, we therefore establish some basic results in Section \ref{sectionpositivity} (and the appendix) before applying
them to our situation in Subsection \ref{sectiontriviality}.

{\bf Acknowledgements.} It is a pleasure to thank S. Boucksom, J.-P. Demailly and M. P\u aun for very helpful discussions.
In particular, Lemma \ref{curvestablecase} comes from a nice idea of J.-P. Demailly. A part of the article was done during
the first-named author's visit to the Institut for Mathematical Sciences
of National University of Singapore for the program "Complex Geometry, Dynamical Systems and Foliation Theory". 
He is very grateful to the organizers for the invitation and the support.
This work was partially supported by the Agence Nationale de la Recherche grant project Foliage\footnote{ANR-16-CE40-0008} and the Agence Nationale de la Recherche grant ``Convergence de Gromov-Hausdorff en g\'{e}om\'{e}trie k\"{a}hl\'{e}rienne"
(ANR-GRACK).

\begin{center}
{\bf Notation and basic results}
\end{center}

For general definitions we refer to \cite{Har77, Kau83, Dem12}. We use the terminology of \cite{Deb01}
for birational geometry and \cite{Laz04a} for (algebraic) notions of positivity.
Manifolds and varieties will always be supposed to be irreducible.
A fibration is a proper surjective map with connected fibers \holom{\varphi}{X}{Y} between normal complex spaces.

Let us recall some basic facts about reflexive sheaves:

\begin{proposition} \label{facts}
\begin{enumerate}[1)]
\item \cite[Cor.1.4]{Har80} Let $M$ be a manifold, and let $\sF$ be a reflexive sheaf on $M$. Then there exists a subset $Z$ of codimension at least three
such that $\sF \otimes \sO_{M \setminus Z}$ is locally free. 
\item \cite[Cor.1.7]{Har80} Let \holom{\varphi}{M}{N} be a proper, equidimensional, dominant morphism between normal varieties. If $\sF$ is a reflexive sheaf on $M$, then $\varphi_\star (\sF)$ is a reflexive sheaf on $N$. 
\item \cite[Prop.1.6]{Har80} Let $M$ be a normal variety, and let $\sF_1$ and $\sF_2$ be reflexive sheaves on $M$. Suppose that there exists
a closed subset $Z \subset M$ of codimension at least two such that 
$\sF_1 \otimes \sO_{M \setminus Z} \simeq \sF_2 \otimes \sO_{M \setminus Z}$.
Then we have $\sF_1 \simeq \sF_2$. 
\end{enumerate}
\end{proposition}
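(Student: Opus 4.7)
The three parts of the proposition are standard facts about reflexive sheaves from Hartshorne's paper, so the plan is to give quick independent arguments for each using local cohomology, depth/Auslander--Buchsbaum, and the fundamental pushforward characterization of reflexivity.

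For part 1), the strategy is to use the fact that on a regular local ring of dimension $d$, a finitely generated module $M$ satisfies $\depth M + \mathrm{pd}\, M = d$ (Auslander--Buchsbaum). First I would show that a reflexive coherent sheaf $\sF$ on a smooth variety has depth $\geq 2$ at every point of codimension $\geq 2$; this is the Serre condition $S_2$, which for reflexive sheaves follows from $\sF = (\sF^\vee)^\vee$ together with the standard local cohomology characterization. Then at a point $x$ of codimension $2$, $\sF_x$ is torsion-free and of depth $\geq 2 = \dim \sO_{M,x}$, hence Cohen--Macaulay of maximal depth, so $\mathrm{pd}\,\sF_x = 0$ and $\sF$ is locally free in a neighborhood of $x$. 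Thus the non-free locus has codimension $\geq 3$.

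For part 2), the key is to verify that $\varphi_\star \sF$ is torsion-free and $S_2$ on $N$. Torsion-freeness is immediate since $\varphi$ is dominant and $\sF$ torsion-free. For $S_2$, I would use that on a normal variety $\sG$ is reflexive if and only if for every closed subset $Z \subset N$ of codimension $\geq 2$ the restriction $\sG \to j_\star j^\star \sG$ is an isomorphism, where $j:N\setminus Z\hookrightarrow N$. Because $\varphi$ is proper and equidimensional, $\varphi^{-1}(Z)$ has codimension $\geq 2$ in $M$, and reflexivity of $\sF$ gives $\sF = j'_\star j'^\star \sF$ over $M \setminus \varphi^{-1}(Z)$. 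Pushing forward and using proper base change / commutativity of $\varphi_\star$ with open restriction yields the desired equality $\varphi_\star \sF = j_\star j^\star \varphi_\star \sF$.

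For part 3), with $j:M\setminus Z\hookrightarrow M$ the inclusion, reflexivity of $\sF_i$ on the normal variety $M$ implies $\sF_i \simeq j_\star (\sF_i|_{M\setminus Z})$ for $i=1,2$ (this is the $S_2$ characterization of reflexive sheaves cited above). Applying $j_\star$ to the given isomorphism $\sF_1|_{M\setminus Z} \simeq \sF_2|_{M\setminus Z}$ and combining with the two identifications yields $\sF_1 \simeq \sF_2$.

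Overall this is routine once the $S_2$-characterization of reflexivity on a normal variety is available; the only step needing genuine input is the codimension $3$ bound in part 1), where one must invoke Auslander--Buchsbaum rather than just $\depth \geq 2$. There is no real obstacle here since everything is formal from the equivalence \textbf{reflexive} $\Leftrightarrow$ \textbf{torsion-free and $S_2$} on normal schemes, and from the pushforward-along-an-open-immersion description of reflexive hulls.
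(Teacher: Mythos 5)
Your three sketches are correct and reproduce the standard arguments; note, though, that the paper gives no proof of this proposition at all — it simply cites the stated results of Hartshorne's \emph{Stable reflexive sheaves} (Cor.~1.4, Cor.~1.7, Prop.~1.6), and your reasoning is essentially a reconstruction of the proofs in that reference. Two small points worth tightening. In part 2), the inequality $\codim_M \varphi^{-1}(Z)\geq 2$ does need the equidimensionality hypothesis, which you use but should say explicitly is where it enters: with $\dim\varphi^{-1}(Z)\leq\dim Z+(\dim M-\dim N)$ one gets $\codim_M\varphi^{-1}(Z)\geq\codim_N Z\geq 2$, and then $j_\star\varphi'_\star=\varphi_\star j'_\star$ plus $j'_\star j'^\star\sF\simeq\sF$ (reflexivity over the codimension-$\geq 2$ complement, i.e.\ the characterization you prove in part 3) give the conclusion; also observe that coherence of $\varphi_\star\sF$ comes from properness. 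In part 1), the chain ``reflexive $\Rightarrow$ $\sF=\sF^{\vee\vee}$ $\Rightarrow$ $\depth_x\sF\geq 2$ for $\codim x\geq 2$'' uses the lemma that $\shom(\sG,\sO_M)$ inherits depth $\geq 2$ from $\sO_M$; you gesture at this via the local cohomology characterization but should cite it (it is Hartshorne's Prop.~1.3). With that, the Auslander--Buchsbaum step at a codimension-$2$ point is exactly right and yields the codimension-$3$ bound.
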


\section{Positivity of vector bundles} \label{sectionpositivity}

\subsection{Preliminaries}

We first recall the definition of possibly singular Hermitian metrics on vector bundles and some basic properties.
We refer to \cite{BP08,PT14,Rau15,Pau16,HPS16} for more details.
Let $E\rightarrow X$ be a holomorphic vector bundle of rank $r$ on a complex manifold $X$. We denote by 
$$H_r := \{A =(a_{i, \overline{j}})\}$$
the set of $r\times r$, semi-positive definite Hermitian matrices. Let $\overline{H}_r$ be the space of semi-positive, possibly unbounded Hermitian
forms on $\mathbb{C}^r$. A possibly singular Hermitian metric $h$ on $E$ is locally given by a measurable map with values in $\overline{H}_r$
such that 
\begin{equation}\label{genefinit}
0 < \det h < +\infty  \qquad\text{ almost everywhere.}
\end{equation}
In the above definition, a matrix valued function $h= (h_{i, \overline{j}})$ is measurable provided that all entries 
$h_{i, \overline{j}}$ are measurable.

\begin{definition}
Let $X$ be a (not necessarily compact) complex manifold, and let $(E, h)$ be a holomorphic vector bundle on $X$ with a possibly singular hermitian metric $h$. 
Let $\theta$ be a smooth closed $(1,1)$-form on $X$.

We say that $i\Theta_h (E) \succcurlyeq \theta \otimes \Id_{\End (E)}$, if for any open set $U\subset X$ and $s\in H^0 (U, E^\star)$, 
$$
dd^c \ln |s|_{h^\star} - \theta \geq 0 \qquad\text{on } U \text{ in the sense of currents}.
$$
We say that $(E,h)$ is positively curved on $X$ if $\Theta_h (E) \succcurlyeq 0$. 
\end{definition}

We introduce here a weaker notion of positivity which will be important for us.

\begin{definition}\label{weakpocu}
Let $X$ be a K\"ahler manifold with a K\"ahler metric 
$\omega_X$. Let $E$ be a holomorphic vector bundle on $X$.
We say that $E$ is weakly positively curved on $X$, if for every $\epsilon >0$, there exists a possibly singular hermitian metric $h_\ep$ on $E$ such that 
$i \Theta_{h_\ep} (E) \succcurlyeq  -\ep \omega_X \otimes \Id_{\End (E)}$. 
\end{definition}

\begin{remark}\label{equdefn}

{\em (i):} If $i\Theta_h (E) \succcurlyeq \theta \otimes \Id_{\End (E)}$ for some smooth form $\theta$, by the definition of quasi-psh functions, 
for every subset $U \Subset X$, we have $h \geq c\cdot \Id$ on $U$ for some constant $c >0$ depending on $U$.

{\em (ii):} If $X$ is projective, by using the same argument as in \cite[Sect 2]{PT14}\cite[Thm 2.21]{Pau16}, 
we can prove that the notion of "weakly positively curved vector bundle"  implies the weak positivity in the sense of Viehweg.
However, the inverse seems unclear.
\end{remark}

Now we recall two basic properties of possibly singular hermitian metrics 
proved in \cite{PT14, Pau16} which are extremely useful.

\begin{proposition}\label{speccase}
Let $E$ be a holomorphic vector bundle on a  complex manifold $X$ and let $\theta$ be a smooth $(1,1)$-form on $X$.
Let $X_0 \subset X$ be a Zariski open set and let $(E|_{X_0},h_0)$ be a possibly singular hermitian metric $h_0$ defined only on $E |_{X_0}$ such that 
$i\Theta_{h_0} (V) \succcurlyeq \theta \otimes \Id_{\End (E)}$ on $X_0$.

{\em (i):} If  $h_0 \geq c \cdot \Id$ on $X_0$ for some constant $c >0$, 
then  $h_0$ can be extended as a possibly singular hermitian metric $h$ on the total space $E$ over $X$ 
such that
$$i\Theta_{h} (E) \succcurlyeq \theta \otimes \Id_{\End (E)} \qquad\text{on }X .$$

{\em (ii):} If $\codim_X (X\setminus X_0) \geq 2$,  then  $h_0$ can be extended as a possibly singular hermitian metric $h$ on the total space $E$ over $X$
such that
$$i\Theta_{h} (E) \succcurlyeq \theta \otimes \Id_{\End (E)} \qquad\text{on }X .$$
\end{proposition}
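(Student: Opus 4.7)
The plan is to reduce the extension of the metric $h_0$ to a classical extension problem for plurisubharmonic functions, applied section by section. By the definition given, the curvature hypothesis on $X_0$ means that for every open $U\subset X$ and every $s\in H^0(U,E^*)$, the function $\log|s|_{h_0^*}-\psi_U$ is psh on $U\cap X_0$, where $\psi_U$ is a local $dd^c$-potential for $\theta$ on $U$. Hence extending the curvature inequality amounts to extending each such psh function across $X\setminus X_0$ and then assembling them into a metric.

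First I would work locally: fix a trivializing neighborhood $U\subset X$ of $E$ with frame $e_1,\ldots,e_r$ and dual frame $e_1^*,\ldots,e_r^*$. For each constant $\sigma\in\C^r$, the section $s_\sigma:=\sum_i\sigma_i e_i^*$ yields a $\theta$-psh function $u_\sigma:=\log|s_\sigma|_{h_0^*}$ on $U\cap X_0$. In case (i), the lower bound $h_0\geq c\cdot\Id$ dualizes to $h_0^*\leq c^{-1}\cdot\Id$, so $|s_\sigma|_{h_0^*}^2\leq c^{-1}|\sigma|^2$ and $u_\sigma$ is locally bounded above; the classical extension theorem for psh functions across an analytic subset (Grauert--Remmert type) produces a unique psh extension of $u_\sigma-\psi_U$ to all of $U$. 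In case (ii), the Hartogs/Lelong-type extension in codimension $\geq 2$ provides the extension without any boundedness hypothesis. In either case, polarising the collection $\{\tilde u_\sigma\}_{\sigma\in\C^r}$ pointwise recovers a measurable, semi-positive Hermitian form $\tilde h^*$ on $E^*|_U$ extending $h_0^*$; dualising fibrewise defines $h$. The a.e. non-degeneracy (\ref{genefinit}) holds in case (i) because the bound $\tilde h\geq c\cdot\Id$ persists through the extension (the function $\log\det\tilde h^*$ remains bounded above), and in case (ii) because the complement of $X_0$ has Lebesgue measure zero.

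Finally I would verify $i\Theta_h(E)\succcurlyeq\theta\otimes\Id_{\End(E)}$ on all of $X$. Given an open $V\subset X$ and a section $s=\sum s_i(z)e_i^*\in H^0(V,E^*)$ with holomorphic coefficients, one reduces the psh-ness of $\log|s|_{h^*}-\psi_V$ to the constant-coefficient case already handled on the trivialisations: on $V\cap X_0$ this psh-ness is the assumption, and the extension is uniquely determined by its restriction to $V\cap X_0$ together with its local behaviour near $V\setminus X_0$, which in both cases is controlled by the corresponding behaviour of the $\tilde u_\sigma$. The main obstacle is the second step, i.e.\ turning the section-by-section extensions into a genuine measurable metric satisfying (\ref{genefinit}); the crucial analytic inputs are the Grauert--Remmert extension of bounded psh functions in case (i) and the Hartogs/Lelong extension in codimension $\geq 2$ in case (ii), while the rest of the argument is bookkeeping that matches the total-space characterisation of positively curved vector bundles used in \cite{PT14,Pau16}.
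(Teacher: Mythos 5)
Your argument is essentially the same as the paper's: the key step in both is to apply the hypothesis directly to an arbitrary local section $s$ of $E^\star$, observe that $\log|s|_{h_0^\star}$ is $\theta$-psh on $X_0$ and (in case (i)) locally bounded above since $h_0^\star \leq c^{-1}\Id$, and extend it across $X\setminus X_0$ by the Grauert--Remmert/Hartogs-type removable-singularity theorem for psh functions. The intermediate polarisation step you introduce to reconstruct $h$ pointwise is not needed, since $X\setminus X_0$ is an analytic set of measure zero and a possibly singular Hermitian metric is only required to be a measurable matrix defined almost everywhere, so $h_0$ itself already defines $h$ a.e.~and the paper's proof is, in effect, your first and third paragraphs with the second omitted.
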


\begin{proof}
Let $s\in H^0 (U, E^\star)$ for some open set $U\subset X$. By assumption, $dd^c \ln |s|_{h_0^\star} \geq \theta$ on $U \cap X_0$. 
The condition  $h_0 \geq c \cdot \Id$ implies that $\ln |s|_{h_0^\star}$ is  upper bounded on any compact set
in $U$. By Hartogs, $\ln |s|_{h_0^\star}$ can be extended as a quasi-psh function on $U$ with $dd^c \ln |s|_{h_0^\star} \geq \theta$ on $U$.
This proves $(i)$. As for $(ii)$, we can directly apply Hartogs' theorem.
\end{proof}

As a consequence of Proposition \ref{speccase}, we have the following important \textit{curvature increasing} property.

\begin{proposition}\label{quotientweakpositive}\cite{PT14}\cite[Lemma 2.19]{Pau16}
Let $\pi: E_1 \rightarrow E_2$ be a generically surjective morphism between two holomorphic vector bundles on a complex manifold $X$.
Let $h$ be a possibly singular metric on $E_1$ such that $i\Theta_h (E_1) \succeq \theta \otimes \Id_{\End (E_1)}$ 
for some smooth $(1,1)$-form $\theta$ on $X$.
Then $h$ induces a quotient metric $h_\quo$ on $E_2$ such that $i\Theta_{h_\quo} (E_2) \succeq \theta \otimes \Id_{\End (E_2)}$ on $X$.

In particular, if $E_1$ is (resp. weakly) positively curved, $E_2$ is also (resp. weakly) positively curved.
\end{proposition}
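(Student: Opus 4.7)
The plan is to construct $h_{\quo}$ first on the open locus where $\pi$ is surjective (by dualising and restricting the metric on $E_1$), and then to extend it across the degeneracy locus using Proposition \ref{speccase}. Let $Z \subset X$ denote the proper analytic subset where $\pi$ drops rank, and set $X_0 := X \setminus Z$. Over $X_0$ the dual morphism $\pi^\star : E_2^\star \hookrightarrow E_1^\star$ realises $E_2^\star|_{X_0}$ as a holomorphic sub-bundle of $E_1^\star|_{X_0}$, so I set $h_{\quo}^\star := h^\star|_{E_2^\star}$ on $X_0$ and let $h_{\quo}$ be the dual metric on $E_2|_{X_0}$.

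To verify the curvature inequality on $X_0$, I test against local sections of $E_2^\star$: for any open $U \subset X$ and any $s \in H^0(U, E_2^\star)$, the section $\pi^\star s \in H^0(U, E_1^\star)$ is defined on all of $U$, and by construction
$$
|s|_{h_{\quo}^\star} = |\pi^\star s|_{h^\star} \qquad\text{on } U \cap X_0.
$$
The hypothesis $i\Theta_h(E_1) \succeq \theta \otimes \Id_{\End(E_1)}$ yields $dd^c \log |\pi^\star s|_{h^\star} \geq \theta$ on $U$, so the same inequality holds for $\log |s|_{h_{\quo}^\star}$ on $U \cap X_0$. This is precisely $i\Theta_{h_{\quo}}(E_2) \succeq \theta \otimes \Id_{\End(E_2)}$ on $X_0$.

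For the extension across $Z$, I apply Proposition \ref{speccase}(i) locally. On a relatively compact open $V \Subset X$, Remark \ref{equdefn}(i) gives $h \geq c_V \cdot \Id$ on $V$ for some $c_V > 0$; dualising and restricting to the sub-bundle yields $h_{\quo}^\star \leq c_V^{-1} \Id$, i.e.\ $h_{\quo} \geq c_V \cdot \Id$ on $V \cap X_0$. Proposition \ref{speccase}(i) then extends $h_{\quo}|_{V \cap X_0}$ to a singular Hermitian metric on $E_2|_V$ preserving the curvature bound, and the identity above forces these local extensions to agree on overlaps, so they glue to a global $h_{\quo}$ on $E_2$. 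The ``in particular'' assertion follows by applying the construction to each approximating metric $h_\ep$ of Definition \ref{weakpocu}.

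The main obstacle (and the reason Proposition \ref{speccase} was set up beforehand) is that the degeneracy locus $Z$ may have codimension one, so one cannot appeal to a Hartogs-type extension on $E_2$ itself; it is the quasi-psh control of $\log |\pi^\star s|_{h^\star}$ on all of $X$, combined with the lower bound on $h_{\quo}$ inherited from $h$, that drives the argument.
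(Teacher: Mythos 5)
Your proof is correct and follows the same route as the paper: define the quotient metric on the surjectivity locus $X_0$ by dualizing and restricting $h^\star$ to the subbundle $E_2^\star \hookrightarrow E_1^\star$, verify the curvature bound there by testing against local sections of $E_2^\star$ pulled back via $\pi^\star$, and then extend across the degeneracy locus using the local lower bound from Remark \ref{equdefn}(i) together with Proposition \ref{speccase}(i). You've also made explicit the gluing of local extensions and the reason Hartogs alone won't do (the degeneracy locus can be a divisor), both of which the paper leaves tacit.
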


\begin{proof}
Let $X_0 \subset X$ be the Zariski open subset such that $\pi$ is surjective over $X_0$. Then $h$ induces a quotient metric $h_2$ on $E_2|_{X_0}$ such 
that $i\Theta_{h_2} (E_2) \succeq \theta \otimes \Id_{\End (E_2)}$ on $X_0$. Let $x\in X\setminus X_0$ be a generic point. Thanks to Remark \ref{equdefn},
there exists a small neighborhood $U$ of $x$
such that $h \geq c \cdot \Id$ on $U$ for some constant $c >0$.
Therefore $h_2 \geq c \cdot \Id$ on $U \cap X_0$. 
By applying Proposition \ref{speccase}, we can know that $h_2$ can be extended as a metric $h_\quo$ on the total space $E_2$ 
such that $i\Theta_{h_\quo} (E_2) \succeq \theta \otimes \Id_{\End (E)}$ on $X$.
\end{proof}

Thanks to the fundamental works like \cite{Ber09, BP08, PT14} among many others,
we know that the notion of possibly singular hermitian metrics fits well with the positivity of direct images. We will use the following version in the article.

\begin{theorem} \cite[Section 5]{PT14}\label{mainposit}
Let $f: X\rightarrow Y$ be a fibration between two projective manifolds and let $L$ be a line bundle on $X$ with a possibly singular metric $h_L$
such that $\Theta_{h_L} (L) \geq 0$. Let $m\in \mathbb{N}$ such that the multiplier ideal sheaf $\mathcal{I} (h_L ^{\frac{1}{m}} |_{X_y})$
is trivial over a generic fiber $X_y$, namely $\int_{X_y} |e_L|_{h_L} ^{\frac{2}{m}} < +\infty$, where $e_L$ is a basis of $L$. 

Let $Y_1$ be the locally free locus of $f_\star (\mathcal{O}_X (m K_{X/Y} +L) )$. Then the vector bundle $f_\star (\sO_X(m K_{X/Y} +L)) \otimes \sO_{Y_1}$ admits a possibly singular hermitian metric $h$ such that $(f_\star (\sO_X(m K_{X/Y} +L)) \otimes \sO_{Y_1} , h)$  
is positively curved on $Y_1$. 
\end{theorem}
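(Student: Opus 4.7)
The plan is to endow $\sE := f_\star\sO_X(mK_{X/Y}+L)$ with the natural fiberwise $L^{2/m}$ Bergman-type metric induced by $h_L$, and then to derive the curvature positivity from a twisted Ohsawa--Takegoshi $L^{2/m}$ extension theorem with sharp constant (the Berndtsson--P\u aun circle of ideas).

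\textbf{Construction of the metric.} For $y$ in the locally free locus $Y_1$, the stalk $\sE_y$ embeds into $H^0(X_y, mK_{X_y}+L|_{X_y})$: any local frame of $\sE$ restricts to holomorphic sections on the smooth fibers. For $u\in H^0(X_y, mK_{X_y}+L|_{X_y})$ set
$$
\|u\|_{h(y)}^{2/m} \;:=\; \int_{X_y} (u\wedge \bar u)^{1/m}\, h_L^{1/m},
$$
where $(u\wedge \bar u)^{1/m}$ is interpreted as a volume form on $X_y$ with values in $L|_{X_y}\otimes\overline{L|_{X_y}}$, so that pairing with $h_L^{1/m}$ produces a positive measure. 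The hypothesis $\int_{X_y}|e_L|_{h_L}^{2/m}<+\infty$ on a generic fiber, combined with holomorphicity of $u$ on a compact fiber, ensures that this integral is a.e.\ finite. We thus obtain a measurable hermitian metric $h$ on $\sE \otimes\sO_{Y_1}$ satisfying $0<\det h<+\infty$ a.e., qualifying as a possibly singular hermitian metric in the sense recalled above.

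\textbf{Positivity of curvature.} By definition, $(\sE\otimes\sO_{Y_1}, h)$ is positively curved iff for every local holomorphic section $\xi$ of the dual the function $\log|\xi|_{h^\star}$ is plurisubharmonic. By pairing with local holomorphic sections of $\sE$ and restricting to arbitrary holomorphic discs $\Delta\subset Y_1$, this reduces to showing that for every holomorphic family $y\mapsto u(y)\in H^0(X_y, mK_{X_y}+L|_{X_y})$ parametrised by $\Delta$, the function $y\mapsto \log\|u(y)\|_{h(y)}$ is psh. The key input, provided by \cite[Section 5]{PT14}, is an optimal twisted $L^{2/m}$ Ohsawa--Takegoshi extension: any $v_0\in H^0(X_{y_0},mK_{X_{y_0}}+L|_{X_{y_0}})$ of finite $L^{2/m}$ norm extends to $\widetilde v \in H^0(f^{-1}(\Delta), mK_{X/\Delta}+L)$ with
$$
\int_{f^{-1}(\Delta)}|\widetilde v|^{2/m}\, h_L^{1/m}\;\leq\; C\int_{X_{y_0}}|v_0|^{2/m}\, h_L^{1/m},
$$
for a constant $C$ depending only on $\Delta$. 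Feeding this into the Berndtsson-type variational argument (introduced in \cite{BP08} for $m=1$ and adapted in \cite{PT14} for the $m$-th root exponent) shows that $\log\|u(y)\|_{h(y)}$ is realised as the supremum of log-norms of extensions weighted by an OT kernel, hence is psh; Griffiths semi-positivity of $(\sE, h)$ on $Y_1$ follows.

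\textbf{Main obstacle.} The technical heart of the argument is the optimal $L^{2/m}$ Ohsawa--Takegoshi extension. The classical OT theorem delivers an $L^2$ estimate, but here one needs an $L^{2/m}$ estimate with a constant independent of the base parameter, so as to extract sharp plurisubharmonicity rather than a loss of $\varepsilon \omega$ in the curvature. In \cite{PT14, Pau16} this is achieved by combining the Berndtsson--P\u aun psh-variation theorem for $L^2$ Bergman kernels with a Jensen/concavity trick pushing the $m$-th root through the integral, supplemented by an approximation argument handling the singularities of $h_L$. Verifying in parallel that the measurable metric $h$ constructed in the first step genuinely satisfies the regularity required of a singular hermitian metric---in particular that $\det h$ is generically finite and positive---is precisely where the triviality of the multiplier ideal sheaf $\sI(h_L^{1/m}|_{X_y})$ on the generic fiber is used.
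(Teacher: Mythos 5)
Your Bergman-type construction and the Berndtsson--P\u aun variational argument for plurisubharmonicity of $\log\|u(y)\|_{h(y)}$ are indeed the core of \cite[Section 5]{PT14}, and this matches the idea the paper sketches in the remark after the theorem. But there is a genuine gap in the passage from the smooth locus of $f$ to all of $Y_1$.

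Your fiberwise $L^{2/m}$ formula
$$
\|u\|_{h(y)}^{2/m}=\int_{X_y}(u\wedge\bar u)^{1/m}h_L^{1/m}
$$
only makes sense when $X_y$ is a smooth compact fiber, i.e.\ over the Zariski open $Y_0\subset Y$ complementary to the discriminant of $f$ (and to the locus where $h_L|_{X_y}$ degenerates). You write ``for $y$ in the locally free locus $Y_1$'' and in the next breath ``restricts to holomorphic sections on the smooth fibers''; these are not the same set. The discriminant locus can be a \emph{divisor} in $Y$, so $Y_1\setminus Y_0$ is generically codimension one and Hartogs-in-codimension-two does not apply. To extend the positively curved metric $h$ from $Y_0$ to $Y_1$ one must first show a uniform lower bound $h\geq c\cdot\Id$ near the boundary; the paper's remark points out that this is obtained from the Ohsawa--Takegoshi extension theorem combined with \emph{semistable reduction} (which controls the geometry of the central fibers), and then one invokes Proposition~\ref{speccase}(i), which is exactly the extension lemma for lower-bounded positively curved singular metrics. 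In your write-up, OT appears only inside the psh-variation step over $Y_0$; you never address how the metric propagates across the discriminant divisor, and your closing paragraph misidentifies the delicate point as finiteness of $\det h$ rather than the curvature extension. Without the $h\geq c\cdot\Id$ bound plus Proposition~\ref{speccase}(i), the conclusion of the theorem on $Y_1$ (rather than merely on $Y_0$) is unproved.
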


\begin{remark}
We recall briefly the idea of the proof. In fact, by using \cite{Ber09, BP08}, over some Zariski open set $Y_0$ of $Y$, 
we can construct a continuous metric $h$ on $f_\star (\sO_X(m K_{X/Y} +L)) \otimes \sO_{Y_0}$
such that it is positively curved. Thanks to the Ohsawa-Takegoshi extension theorem and the semistable reduction, we know that $h \geq c \cdot \Id$ for some constant $c >0$. 
The theorem is thus proved by using Proposition \ref{speccase},
\end{remark}

As a consequence of Theorem \ref{mainposit} and an easy trick \cite[Lemma 5.25]{CP17},
we have the following variant.
\begin{proposition}\label{lowercurcontrol}
Let $f: X\rightarrow Y$ be a fibration between two projective manifolds and let $L$ be a line bundle on $X$ with a possibly singular metric $h_L$
such that $\Theta_{h_L} (L) \geq f^\star\theta$ for some smooth $d$-closed $(1,1)$-form $\theta$ (not necessarily positive) on $Y$. 
Let $m\in \mathbb{N}$ such that $\mathcal{I} (h_L ^{\frac{1}{m}} |_{X_y}) = \mathcal{O}_{X_y}$ for a generic fiber $X_y$.

Let $Y_1$ be the locally free locus of $f_\star (\mathcal{O}_X (m K_{X/Y} +L) )$.
Then the vector bundle $f_\star (\mathcal{O}_X (m K_{X/Y} +L) ) \otimes \mathcal{O}_{Y_1}$ 
admits a possibly singular hermitian metric $h$ such that 
$$i \Theta_h (f_\star (\mathcal{O}_X (m K_{X/Y} +L) )) \succcurlyeq \theta \otimes \Id_{\End (f_\star (\mathcal{O}_X (m K_{X/Y} +L) ))} \qquad\text{on } Y_1 .$$
\end{proposition}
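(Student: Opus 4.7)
The plan is to reduce to Theorem \ref{mainposit} by absorbing $f^\star\theta$ into a twist of $L$ by the pullback of an auxiliary line bundle on $Y$, apply that theorem (which yields a curvature lower bound of $0$), and then untwist to restore the $\theta$-bound. Since $Y$ is projective, I first choose an ($\mathbb{R}$-)line bundle $B$ on $Y$ together with a smooth Hermitian metric $h_B$ satisfying $i\Theta_{h_B}(B)=\theta$; existence of such $(B,h_B)$ follows from the $dd^c$-lemma once one knows that $[\theta]$ lies in the real N\'eron-Severi space $\mathrm{NS}(Y)_\mathbb{R}$, which will be the case in the intended applications.

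Next, set $L' := L - f^\star B$ with singular metric $h_{L'} := h_L \cdot f^\star h_B^{-1}$. By construction
\[
i\Theta_{h_{L'}}(L') \;=\; i\Theta_{h_L}(L) - f^\star\theta \;\geq\; 0,
\]
so $(L',h_{L'})$ is positively curved. Moreover, $f^\star h_B$ restricts to a positive constant on each fibre $X_y$, hence $\mathcal{I}(h_{L'}^{1/m}|_{X_y}) = \mathcal{I}(h_L^{1/m}|_{X_y}) = \sO_{X_y}$ on a generic fibre, so the multiplier ideal hypothesis of Theorem \ref{mainposit} is preserved.

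Applying Theorem \ref{mainposit} to $(L',h_{L'})$ then produces a positively curved singular Hermitian metric $h'$ on $f_\star\sO_X(m K_{X/Y} + L') \otimes \sO_{Y_1'}$ over its locally free locus $Y_1'$. The projection formula gives
\[
f_\star\sO_X\bigl(m K_{X/Y} + L'\bigr) \;=\; f_\star\sO_X\bigl(m K_{X/Y} + L\bigr) \otimes B^{-1},
\]
so $Y_1' = Y_1$, and the untwisted metric $h := h' \otimes h_B$ is well-defined on $f_\star\sO_X(m K_{X/Y} + L) \otimes \sO_{Y_1}$ via the identification $(E\otimes B^{-1})\otimes B \simeq E$. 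Its curvature satisfies
\[
i\Theta_h \;=\; i\Theta_{h'} + \theta\otimes\Id \;\succcurlyeq\; \theta\otimes\Id,
\]
as required.

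The main technical obstacle I anticipate is the first step: realising $\theta$ as the Chern curvature form of a smooth Hermitian metric on an honest (real) line bundle $B$ on $Y$. In the projective setting of this paper $\theta$ will arise as (the pullback or direct image of) a Chern form and hence $[\theta] \in \mathrm{NS}(Y)_\mathbb{R}$, making the reduction painless; in a fully transcendental Kähler setup one would need to replace $\theta$ by a cohomologous form of Chern type, at the cost of a $dd^c$-correction supplied by the $dd^c$-lemma, or argue by approximation.
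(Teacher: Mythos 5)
Your reduction---twist $L$ by $f^\star B$ with a smooth metric $h_B$ of curvature $\theta$, apply Theorem \ref{mainposit}, then untwist---is correct whenever such a $B$ exists, and you are right to flag that this needs $[\theta]\in \mathrm{NS}(Y)_{\mathbb{R}}$. That is an extra hypothesis not present in the statement of the Proposition, and it genuinely fails on projective $Y$ with $h^{2,0}(Y)>0$, since a smooth $d$-closed real $(1,1)$-form need not have algebraic class. There is also a smaller wrinkle: for irrational $[\theta]$, $B$ is only a formal $\mathbb{R}$-line bundle, so the sheaf-level untwisting $(E\otimes B^{-1})\otimes B\simeq E$ needs to be interpreted with some care at the level of singular Hermitian metrics on honest coherent sheaves.

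The ``easy trick'' the paper cites, \cite[Lemma 5.25]{CP17}, sidesteps this by localizing instead of globalizing. The curvature inequality $i\Theta_h\succcurlyeq\theta\otimes\Id$ is by definition a local statement on $Y_1$: one must check $dd^c\ln|s|_{h^\star}\geq\theta$ for local sections $s$ of the dual bundle over small balls $U\subset Y_1$. On such a ball one can always write $\theta=dd^c\varphi_U$ for a smooth $\varphi_U$; twisting the weight of $h_L$ by $e^{-\varphi_U\circ f}$ on $f^{-1}(U)$ makes the curvature nonnegative there, so the Berndtsson--P\u{a}un estimate applies on $U$, and the local potential $\varphi_U$ drops out when one returns from the twisted Bergman metric to the intrinsic metric $h$ on $f_\star\sO_X(mK_{X/Y}+L)$, yielding $dd^c\ln|s|_{h^\star}\geq dd^c\varphi_U=\theta$ on $U$. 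This proves the Proposition for arbitrary smooth $d$-closed $\theta$ with no assumption on $[\theta]$. In the applications of this paper (Lemma \ref{lmpost}) the form $\theta$ does come from a rational multiple of an ample class $A_Y$, so your global line-bundle version would suffice there, but the Proposition as stated needs the local argument.
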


\bigskip

\subsection{Numerical flatness}
Let $X$ be a compact K\"ahler manifold and let $E$ be a holomorphic vector bundle on $X$.  
We say that $E$ is numerically flat if $E$ and its dual $E^\star$ are nef (in the sense of \cite{DPS94}),
one sees easily that this is equivalent to
$$
E \ \mbox{is nef and } c_1 (E) =0 \in H^{1,1} (X, \mathbb{Q}).
$$
The aim of this subsection is to give a sufficient condition for a vector bundle $E$ with $c_1 (E) =0$
to be numerically flat.

\begin{lemma}\label{curvestablecase}
Let $C$ be a smooth curve and let $E$ be a numerically flat vector bundle of rank $n$ on $C$.
Let $\pi : \mathbb{P} (E) \rightarrow C$ be the natural projection. 
Fix an ample line bundle $A$ over $C$. 
Let $T_m \geq 0$ be a positive current with analytic singularities in the class of 
$c_1 (\mathcal{O}_{\mathbb{P} (E)} (m) + \pi^\star A )$ such that $T_m$ is smooth in the neighbourhood of a general $\pi$-fibre.
Set 
$$a_m := \max_{x\in \mathbb{P} (E)} \nu (T_m ,x) ,$$ 
where $\nu (T_m, x)$ is the Lelong number of $T_m$ over $x$.
Then
\begin{equation}\label{lim0sup}
\lim_{m \rightarrow 0}\frac{a_m}{m} = 0 .
\end{equation}
\end{lemma}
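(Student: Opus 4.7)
The plan, following an idea of Demailly credited in the acknowledgements, is to find an algebraic \emph{null curve} through the point where $T_m$ concentrates and to bound $a_m$ by restriction. Write $H := c_1(\sO_{\PP(E)}(1))$. Numerical flatness of $E$ on the curve $C$ forces $H$ to be nef with $H^n = 0$, while $H^{n-1}$ meets every $\pi$-fibre $F \simeq \PP^{n-1}$ positively. The nef dimension of $H$ is therefore exactly $n-1$, and by the nef-reduction theorem there is an almost holomorphic dominant rational map $\rho : \PP(E) \dashrightarrow Z$ with $\dim Z = n-1$ whose general fibre is an irreducible curve $\gamma_0$ with $H \cdot \gamma_0 = 0$. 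Because $H$ is ample on every $\pi$-fibre, $\gamma_0$ must be horizontal and $\pi_\star [\gamma_0] = d_0 [C]$ for a positive integer $d_0$ depending only on $E$.

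I would then argue that through every $x_0 \in \PP(E)$ there exists an irreducible curve $\gamma$ satisfying $H \cdot \gamma = 0$ and $\pi_\star [\gamma] = d [C]$ with $1 \leq d \leq d_0$. For general $x_0$ this is immediate from $\rho$. For the remaining special $x_0$, including the indeterminacy locus of $\rho$, one specialises the family of general fibres to obtain an effective $1$-cycle $\gamma_\infty = \sum n_i \gamma_i$ through $x_0$ in the numerical class $d_0[C]$; nefness of $H$ forces $H \cdot \gamma_i = 0$ on every component, ampleness of $H$ on each $\pi$-fibre forces the component $\gamma$ through $x_0$ to be horizontal, and the identity $\sum n_i \pi_\star[\gamma_i] = d_0[C]$ bounds the degree $d$ of $\gamma$ over $C$ by $d_0$.

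Now fix $m$ and let $x_0 = x_0(m)$ realise $a_m = \nu(T_m, x_0)$. Since $T_m$ is smooth in a neighbourhood of a general $\pi$-fibre, its polar locus is contained in finitely many $\pi$-fibres; as $\pi(\gamma) = C$, the curve $\gamma$ avoids this polar locus, and the restriction $T_m|_{\tilde\gamma}$ to the normalisation $\tilde\gamma \to \gamma$ is a well-defined positive measure. Its total mass is the intersection number
\[
\int_{\tilde\gamma} T_m|_{\tilde\gamma} = [T_m] \cdot [\gamma] = m \, H \cdot \gamma + \pi^\star A \cdot \gamma = d \deg A \leq d_0 \deg A,
\]
while at any preimage of $x_0$ in $\tilde\gamma$ this mass is at least $\nu(T_m, x_0) = a_m$, because Lelong numbers of quasi-psh functions do not decrease under restriction to a smooth curve through the point. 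Therefore $a_m \leq d_0 \deg A$, a constant independent of $m$, and dividing by $m$ yields (\ref{lim0sup}).

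The main technical point is the construction of the horizontal null curve with uniformly bounded degree over $C$: the nef reduction only immediately provides such curves through a Zariski-open subset of $\PP(E)$, and one has to combine it with a specialisation/cycle-theoretic argument --- exploiting both the nefness of $H$ and its fibrewise ampleness --- to obtain them through \emph{every} point without losing control of $d$. The rest of the argument (restriction of the positive current to $\tilde\gamma$ and the mass/Lelong-number comparison) is then routine, the crucial point being that the hypothesis "$T_m$ smooth near a general fibre" is exactly what guarantees that a horizontal curve avoids the polar locus of $T_m$.
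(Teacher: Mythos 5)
There is a genuine gap at the very first step. From ``$H$ is nef, $H^n=0$, and $H$ is ample on the $\pi$-fibres'' you conclude that the nef dimension of $H$ is $n-1$; but $H^n=0$ only bounds the \emph{numerical} dimension $\nu(H)\le n-1$, and the nef dimension $n(H)$ can be strictly larger. This already fails for $n=2$: take $E$ a stable bundle of rank $2$ and degree $0$ on a curve $C$ of genus $\ge 2$ corresponding (via Narasimhan--Seshadri) to a unitary representation $\rho:\pi_1(C)\to U(2)$ with dense image. Such $E$ is numerically flat, $H=c_1(\sO_{\PP(E)}(1))$ is nef with $H^2=0$, yet there is \emph{no} irreducible curve $\gamma\subset\PP(E)$ with $H\cdot\gamma=0$: any such $\gamma$ is a multisection, and on its normalisation $\nu:\widetilde\gamma\to C$ the surjection $\nu^\star E\twoheadrightarrow \nu^\star\sO_{\PP(E)}(1)|_\gamma$ would be a degree-zero line bundle quotient of $\nu^\star E$, contradicting the stability of $\nu^\star E$ (preserved because $\rho$ restricted to any finite-index subgroup of $\pi_1(C)$ still has dense image). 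Hence $n(H)=n$, there is no horizontal null curve through any point, and the whole construction --- the nef reduction $\rho$, the bounded-degree multisection through $x_0$, and the restriction-and-mass estimate built on it --- has no starting point. (Your final bound $a_m\le d_0\deg A$, uniform in $m$, should itself have been a warning sign: it is strictly stronger than what the lemma asserts, and I see no reason it should hold for stable flat $E$.)

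The paper's argument takes a completely different route that sidesteps this. It invokes Proposition \ref{lelongnumcont} (a repackaging of Demailly's regularization and self-intersection results from \cite{Dem92}, proved via the gluing Lemma \ref{approx}) to produce, for some $k\in\{1,\dots,n\}$, a closed positive $(k,k)$-current $\Theta_k$ in the class $(m\alpha+\pi^\star c_1(A))^k$ with $\Theta_k\ge (a_m/n)^k[Z_m]$ for a $k$-codimensional $Z_m$ lying in a $\pi$-fibre. Pairing with $\alpha^{n-k}$ and using $\alpha^n=0$ and $(\pi^\star c_1(A))^2=0$ kills the $m^k$-coefficient and yields $(a_m/n)^k\le k\,\deg A\cdot m^{k-1}$, hence $a_m=O(m^{(n-1)/n})$. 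This bound is sublinear but not uniform; it holds for every numerically flat $E$, including the stable ones where your approach breaks down, and is exactly what \eqref{lim0sup} requires.
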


\begin{proof}
Thanks to Proposition \ref{lelongnumcont}, 
for the current $T_m$,  we can find a $k \in \{1, 2, \cdots , n\}$ depending on $m$, a closed $(k ,k)$-positive current $\Theta_{k}$ in 
the same class of $(c_1 (\mathcal{O}_{\mathbb{P} (E)} (m) + \pi^\star A))^{k}$ and a subspace $Z_m$
of codimension $k$ contained in some $\pi$-fibers 
such that 
\begin{equation}\label{ineqimportnew}
\Theta_{k} \geq (\frac{a_m}{n})^{k}  \cdot [Z_m] 
\end{equation}
in the sense of currents.  Set $\alpha :=c_1 (\mathcal{O}_{\mathbb{P} (E)} (1))$, then $\alpha$ is a nef cohomology class.
Thus \eqref{ineqimportnew} implies 
$$
(m\alpha + \pi^\star c_1 (A))^k \cdot \alpha^{n-k}=\Theta_{k} \cdot \alpha^{n-k} \geq  (\frac{a_m}{n})^{k} \int_{[Z_m]} \alpha^{n-k} .
$$
Note that $Z_m$ is contained in some fiber of $\pi$, we have $\int_{[Z_m]} \alpha^{n-k} \geq 1$. 
Therefore 
\begin{equation}\label{ineqpolynew}
 (m\alpha + \pi^\star c_1 (A))^k \cdot \alpha^{n-k} \geq (\frac{a_m}{n})^{k}
\end{equation}

Since $c_1 (E) =0$, we have $\alpha^{n} =0 \in H^{n,n} (\mathbb{P} (E))$. Therefore
for $m \gg 0$ the left side of \eqref{ineqpolynew} is less or equal than $C_1 m^{k-1} +C_2$ for some uniform constants $C_1$ and $C_2$.
Then \eqref{ineqpolynew} implies that $ (\frac{a_m}{n})^{k} \leq C_1 m^{k-1} +C_2$. As $k\leq n$,  we obtain finally
$$a_m \leq C_3 m^{\frac{n-1}{n}} + C_4 $$
for some uniform constants $C_3$ and $C_4$ independent of $m$. 
This proves \eqref{lim0sup}.
\end{proof}

The following lemma is essentially shown in \cite[Prop 2.3.5]{PT14}\cite[Thm 2.21]{Pau16}.

\begin{lemma}\label{equdef}
Let $X$ be a $n$-dimensional projective manifold and
let $E$ be a holomorphic vector bundle on $X$. 
Let $\pi : \mathbb{P} (E) \rightarrow X$ be the natural projection and let $\omega$ be a K\"ahler metric on $\mathbb{P} (E)$.
Then the following three conditions are equivalent:

{\em (i)}
There exists a very ample line bundle $A$ on $X$ such that the restriction
$$
H^0 (\mathbb{P} (E), \mathcal{O}_{\mathbb{P} (E)}  (m) + \pi^\star A) \rightarrow
 H^0 (\mathbb{P} (E) _x ,\mathcal{O}_{\mathbb{P} (E)}  (m) + \pi^\star A )
$$
is surjective over a generic point $x\in X$ for $m\gg 1$.

{\em (ii)} $\pi_\star(\mathbb{B}_{-} (\mathcal{O}_{\mathbb{P} (E)} (1))) \subsetneq X$.\footnote{Recall that, for a line bundle $L$,  $\mathbb{B}  (L)$ is the stable base locus of $L$, namely $\mathbb{B}  (L) = \cap_{m\geq 1} \Bs (|m L|)$. $\mathbb{B}_{-} (L) := \mathbb{B} (
L +\ep A)$ for any ample $A$ and sufficiently small $\ep >0$.} 

{\em (iii)} For 
every $\ep >0$, there exists a possibly singular metric $h_\ep$ on 
$\mathcal{O}_{\mathbb{P} (E)}  (1)$ such that $i \Theta_{h_\ep} (\mathcal{O}_{\mathbb{P} (E)}  (1)) \geq -\ep \omega$ and the multiplier ideal $\sI ( h_{\ep} ^{\otimes m} |_{\mathbb{P} (E) _x})$ of the restriction of $h_{\ep} ^{\otimes m}$
to the generic fiber $\mathbb{P} (E) _x$ is trivial for every $m\in\mathbb{N}$.
\end{lemma}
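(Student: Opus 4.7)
I will prove the cycle (i) $\Rightarrow$ (iii) $\Leftrightarrow$ (ii) and (iii) $\Rightarrow$ (i), following the approach of \cite[Sect.~2]{PT14} and \cite[Thm.~2.21]{Pau16}. The two workhorses are the construction of an almost positively curved singular metric on $\mathcal{O}_{\mathbb{P}(E)}(1)$ from global sections, and the converse construction of global sections from such a metric via Ohsawa--Takegoshi extension in the fibered setting.

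For (i) $\Rightarrow$ (iii), I would fix a smooth positively curved metric $h_A$ on $A$ and a basis $s_0,\dots,s_N$ of $H^0(\mathbb{P}(E),\mathcal{O}_{\mathbb{P}(E)}(m)+\pi^\star A)$, and set
\[
h := \Bigl(\sum_i |s_i|^2_{\pi^\star h_A^{-1}}\Bigr)^{-1/m}
\]
as a possibly singular hermitian metric on $\mathcal{O}_{\mathbb{P}(E)}(1)$. A direct computation gives $i\Theta_h(\mathcal{O}_{\mathbb{P}(E)}(1)) \geq -\tfrac{1}{m}\pi^\star c_1(A,h_A) \geq -\tfrac{C}{m}\omega$ for some constant $C=C(A,\omega)$, so choosing $m\geq C/\epsilon$ yields the curvature bound required in (iii). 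On the generic fiber $F\cong \mathbb{P}^{n-1}$, the surjectivity hypothesis in (i) implies that $s_0|_F,\dots,s_N|_F$ spans $H^0(F,\mathcal{O}_F(m))$; since $m\geq 1$ this linear system is base-point-free on $F$, hence $h|_F$ is smooth and every $\mathcal{I}(h^{\otimes k}|_F)$ is trivial.

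The equivalence (iii) $\Leftrightarrow$ (ii) is the standard characterization of the diminished base locus via almost positively curved singular metrics: a point $x$ lies outside $\mathbb{B}_{-}(L)$ iff for every $\epsilon>0$ there exists a singular metric on $L$ with $i\Theta \geq -\epsilon\omega$ and vanishing Lelong number at $x$. The multiplier ideal triviality in (iii) forces, via Skoda's integrability theorem, all Lelong numbers of $h_\epsilon|_F$ to vanish, which yields (ii). For the converse, (ii) produces such a metric in a neighborhood of the generic fiber, and Proposition \ref{speccase} extends it across the indeterminacy locus while preserving both the curvature bound and the Lelong-number vanishing on $F$.

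Finally for (iii) $\Rightarrow$ (i), I would combine $h_\epsilon^{\otimes m}$ (for $\epsilon$ small) with the positive metric $\pi^\star h_A$ on $\pi^\star A$ to obtain a singular metric on $\mathcal{O}_{\mathbb{P}(E)}(m)+\pi^\star A$ whose multiplier ideal on a generic fiber is trivial by (iii), and then extend sections from the fiber to the total space via the Ohsawa--Takegoshi extension theorem for fibrations, as used in \cite[Sect.~5]{PT14}. I expect the main obstacle to be precisely this last step: since $\pi^\star A$ is trivial along $\pi$-fibers, the negativity $-m\epsilon\omega$ of $h_\epsilon^{\otimes m}$ cannot be absorbed by $\pi^\star A$ in the vertical direction, so one must invoke a version of Ohsawa--Takegoshi whose positivity hypothesis is relative to $\pi$ (exploiting the trivial multiplier ideal on the fiber) rather than global on $\mathbb{P}(E)$; this is exactly the flexibility provided by the framework of \cite{PT14}.
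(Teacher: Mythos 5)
Your overall route is sound and largely mirrors the paper's, but there is one genuine gap in the step (iii) $\Rightarrow$ (i), which you yourself flag as "the main obstacle" without actually resolving it. You correctly observe that setting a metric on $\mathcal{O}_{\mathbb{P}(E)}(m)+\pi^\star A$ as $h_\epsilon^{\otimes m}\otimes \pi^\star h_A$ cannot have nonnegative curvature, since the $\pi^\star A$ term is degenerate in fiber directions while $h_\epsilon^{\otimes m}$ contributes $-m\epsilon\omega$ in \emph{all} directions. But your proposed remedy --- an Ohsawa--Takegoshi with positivity "relative to $\pi$" --- is not what the variants in \cite{PT14} provide: Theorem~\ref{mainposit} and its relatives still require $i\Theta_{h_L}(L)\geq 0$ (or $\geq f^\star\theta$) \emph{globally}, and the trivial multiplier ideal on the fiber is an additional hypothesis, not a substitute for positivity. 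So the argument as sketched does not close.

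The paper's fix is simpler and more concrete: since $\mathcal{O}_{\mathbb{P}(E)}(1)$ is $\pi$-ample, choose a fixed $m_0$ and an ample $A_1$ on $X$ so that $-K_{\mathbb{P}(E)}+\mathcal{O}_{\mathbb{P}(E)}(m_0)+\pi^\star A_1$ is ample, and write the target line bundle as
$$
K_{\mathbb{P}(E)}+\bigl(-K_{\mathbb{P}(E)}+\mathcal{O}_{\mathbb{P}(E)}(m_0)+\pi^\star A_1\bigr)+\mathcal{O}_{\mathbb{P}(E)}(m)+\pi^\star A_2
= \mathcal{O}_{\mathbb{P}(E)}(m+m_0)+\pi^\star(A_1+A_2).
$$
The bracketed factor carries a smooth \emph{strictly} positive metric (positive in fiber directions too, since $\mathcal{O}(m_0)$ is relatively ample), and this dominates the $-m\epsilon\omega$ contribution of $h_\epsilon^{\otimes m}$ \emph{provided $\epsilon$ is chosen small after $m$ is fixed} --- which is allowed, since (iii) supplies a metric for every $\epsilon>0$. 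Then ordinary Ohsawa--Takegoshi applies, with triviality of $\mathcal{I}(h_\epsilon^{\otimes m}|_{\mathbb{P}(E)_x})$ ensuring a nonzero section on the fiber exists. The conclusion is (i) with $A:=A_1+A_2$ and $m$ replaced by $m+m_0$. Your step (i) $\Rightarrow$ (iii) is essentially the paper's (ii) $\Rightarrow$ (iii) (metric from a basis of sections), and your extra arrows (iii) $\Leftrightarrow$ (ii) are redundant --- the paper proves only the shorter cycle (i) $\Rightarrow$ (ii) $\Rightarrow$ (iii) $\Rightarrow$ (i), with (i) $\Rightarrow$ (ii) immediate from definitions. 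So: correct skeleton, one missing idea (the $m_0$-shift combined with the order of quantifiers "$\forall m$, choose $\epsilon$ small enough").
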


\begin{proof}
$(i)\implies (ii)$: It comes from the definition.

$(ii)\implies (iii)$: If $\pi_\star(\mathbb{B}_{-} (\mathcal{O}_{\mathbb{P} (E)} (1))) \subsetneq X$, for every $\ep >0$,  we have
$$\pi_\star (\mathbb{B} (\mathcal{O}_{\mathbb{P} (E)} (1)) +\ep A)) \subsetneq X  ,$$ 
where $A$ is some ample line bundle on $\mathbb{P} (E)$. 
Then for some $m$ large enough, $\pi_\star (\Bs (\mathcal{O}_{\mathbb{P} (E)} (m) +\ep m A)) \subsetneq X$.
Therefore, a basis of $H^0 (\mathbb{P} (E), \mathcal{O}_{\mathbb{P} (E)} (m) +\ep m A) $ induces a metric $h_\ep$ on $\mathcal{O}_{\mathbb{P} (E)} (1)$ with analytic singularities such that
\begin{equation}\label{ineqsing}
i\Theta_{h_\ep} (\mathcal{O}_{\mathbb{P} (E)} (1)) \geq -\ep \omega_A
\end{equation}
and $h_\ep$ is smooth on the generic fiber of $\pi$. $(iii)$ is proved.

$(iii)\implies (i)$: Since $\mathcal{O}_{\mathbb{P} (E)}  (1)$ is relatively ample, there exists some $m_0 \in \mathbb{N}$ and some ample line bundle $A_1$ on $X$
such that $-K_{\mathbb{P} (E)} +\mathcal{O}_{\mathbb{P} (E)}  (m_0) + \pi^\star A_1$ is ample. 
Let $A_2$ be a sufficiently ample line bundle on $X$. For every $m\in\mathbb{N}$ and $\ep$ small enough, 
since $\sI ( h_{\ep} ^{\otimes m} |_{\mathbb{P} (E) _x})$ is trivial for a generic $x\in X$, 
 Ohsawa-Takegoshi extension theorem implies that
$$H^0 (\mathbb{P} (E), K_{\mathbb{P} (E)} + (-K_{\mathbb{P} (E)} +\mathcal{O}_{\mathbb{P} (E)}  (m_0) + \pi^\star A_1) +
\mathcal{O}_{\mathbb{P} (E)}  (m) + \pi^\star A_2) $$
$$\rightarrow
 H^0 (\mathbb{P} (E) _x, K_{\mathbb{P} (E)} + (-K_{\mathbb{P} (E)} +\mathcal{O}_{\mathbb{P} (E)}  (m_0) + \pi^\star A_1) +
\mathcal{O}_{\mathbb{P} (E)}  (m) + \pi^\star A_2) $$
is surjective for a generic $x\in X$. This proves $(i)$.
\medskip
\end{proof}

We can now prove the main result in this subsection:

\begin{proposition}\label{numflatweakpost}
Let $X$ be a $n$-dimensional projective manifold and
let $E$ be a holomorphic vector bundle on $X$ with $c_1 (E)=0$
and $\pi_\star(\mathbb{B}_{-} (\mathcal{O}_{\mathbb{P} (E)} (1))) \subsetneq X$.
Then $E$ is numerically flat.

In particular, let $E$ be a vector bundle on $X$. If $E$ is weakly positively curved on $X$ and $c_1 (E)=0$, then $E$ is numerically flat.
\end{proposition}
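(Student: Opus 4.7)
The plan is to reduce the problem to curves and then exploit the classical theory of semistable bundles on a curve. By Lemma \ref{equdef} the hypothesis $\pi_\star(\mathbb{B}_{-}(\mathcal{O}_{\mathbb{P}(E)}(1))) \subsetneq X$ provides, for every $\ep > 0$, a possibly singular hermitian metric $h_\ep$ on $\mathcal{O}_{\mathbb{P}(E)}(1)$ with $i\Theta_{h_\ep} \geq -\ep \omega$ and trivial multiplier ideal for $h_\ep^{\otimes m}$ on a generic $\pi$-fibre; in particular $\mathcal{O}_{\mathbb{P}(E)}(1)$ is pseudo-effective.

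Fix an ample class $H$ on $X$, and let $C \subset X$ be a general smooth complete intersection of $\dim X - 1$ divisors in $|H|$ of sufficiently high degree; then $c_1(E|_C) = 0$ because $c_1(E) = 0$, and by the generality I may also arrange $C \not\subset \pi_\star(\mathbb{B}_{-}(\mathcal{O}_{\mathbb{P}(E)}(1)))$. I claim $E|_C$ is nef. If not, $E|_C$ admits a quotient line bundle $L$ with $\deg_C L < 0$, giving a section $\sigma \colon C \hookrightarrow \mathbb{P}(E|_C) \subset \mathbb{P}(E)$ with $\sigma^\star \mathcal{O}_{\mathbb{P}(E)}(1) = L$. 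For any ample $A$ on $X$ and any $m$ large enough that $m \deg L + \deg(A|_C) < 0$, every section of $m \mathcal{O}_{\mathbb{P}(E)}(1) + \pi^\star A$ vanishes on $\sigma(C)$, hence $\sigma(C) \subset \mathbb{B}_{-}(\mathcal{O}_{\mathbb{P}(E)}(1))$ and $C = \pi(\sigma(C)) \subset \pi_\star(\mathbb{B}_{-}(\mathcal{O}_{\mathbb{P}(E)}(1)))$ --- a contradiction. Combined with $c_1(E|_C) = 0$, this makes $E|_C$ semistable of slope zero; its Jordan--H\"older factors are then stable degree-zero bundles coming from unitary representations of $\pi_1(C)$, so $E|_C$ is numerically flat, and both $E|_C$ and $E^\star|_C$ are nef.

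To promote numerical flatness from general curves to $E$ itself, I would invoke Mehta--Ramanathan to conclude $E$ is $H$-semistable on $X$, and then show $c_2(E) \cdot H^{\dim X - 2} = 0$: Bogomolov's inequality for $H$-semistable bundles with $c_1(E) = 0$ gives $c_2(E) \cdot H^{\dim X - 2} \geq 0$, while the pseudo-effectivity of $\mathcal{O}_{\mathbb{P}(E)}(1)$, together with the Segre identity $\pi_\star c_1(\mathcal{O}_{\mathbb{P}(E)}(1))^{\rank E + 1} = -c_2(E)$ and an intersection estimate against the currents $T_\ep$, gives the reverse inequality. Once $c_1(E) = 0 = c_2(E) \cdot H^{\dim X - 2}$, the Bando--Siu extension of Donaldson--Uhlenbeck--Yau furnishes a Hermite--Einstein metric with vanishing Chern forms, and $E$ is numerically flat. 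A more elementary alternative, which I expect the paper to follow, is to apply Lemma \ref{curvestablecase} to the numerically flat restriction $E|_C$: this bounds the Lelong numbers of $m T_\ep + \pi^\star c_1(A)$ along $\mathbb{P}(E|_C)$ sublinearly in $m$, and a covering argument over the family of general $C$ should yield nefness of $\mathcal{O}_{\mathbb{P}(E)}(1)$ on all of $\mathbb{P}(E)$ without invoking Hermite--Einstein theory.

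The ``in particular'' clause is then immediate: a weakly positively curved $E$ yields singular metrics on $\mathcal{O}_{\mathbb{P}(E)}(1)$ satisfying condition (iii) of Lemma \ref{equdef}, hence (ii), so the main statement applies. The main obstacle of the proof lies in the last reduction: nefness of $E|_C$ for general $C$ does not automatically propagate to nefness of $E$, and a genuine global input---either the Chern-class route via Bando--Siu, or the Lelong-number control of Lemma \ref{curvestablecase}---is needed to control the curves $\gamma \subset \mathbb{P}(E)$ projecting into the exceptional subset $\pi_\star(\mathbb{B}_{-}(\mathcal{O}_{\mathbb{P}(E)}(1)))$.
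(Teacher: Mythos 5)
Your reduction to a general complete intersection curve $C \subset X$, and the conclusion that $E|_C$ is numerically flat (via semistability of degree zero), is correct and matches the spirit of the paper's Step 1. You also correctly single out Lemma \ref{curvestablecase} as the right tool for the global step and, to your credit, you explicitly flag that nefness of $E|_C$ for general $C$ does not by itself give nefness of $E$. But your proposed Route (a) does not actually close this gap: the Segre-class inequality $c_1(\sO_{\PP(E)}(1))^{\rank E+1}\cdot \pi^\star H^{\dim X-2}\geq 0$ does \emph{not} follow from pseudo-effectivity of $\sO_{\PP(E)}(1)$ alone --- for a line bundle that is pseudo-effective but not nef, self-intersection powers can be negative, so you would already need the nefness you are trying to prove. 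And your Route (b) is left at the level of ``a covering argument over the family of general $C$ should yield nefness,'' which is not the mechanism that works.

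The argument the paper actually uses is more pointed. Fix an arbitrary curve $B\subset \PP(E)$ finite over its image and set $T_m := dd^c\ln\sum|s_i|^2$ for a basis $\{s_i\}$ of $H^0(\PP(E),\sO_{\PP(E)}(m)+\pi^\star A)$, and let $a_m := \nu(T_m, B)$ be the Lelong number of $T_m$ along a generic point of $B$. Now pick \emph{one} complete-intersection curve $C\subset X$, \emph{general} (so that $E|_C$ is numerically flat, by the Step-1 argument) but chosen so that $C\cap \pi(B)$ is finite and non-empty. Restricting $T_m$ to $\pi^{-1}(C)$ and applying Lemma \ref{curvestablecase} there bounds $\nu(T_m|_{\pi^{-1}(C)}, x)/m \to 0$ for any $x\in B\cap\pi^{-1}(C)$; since Lelong numbers do not decrease under restriction this gives $a_m/m\to 0$. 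The missing global input you were looking for is then Demailly regularization \cite[Thm 1.1]{Dem92}: there is a current $\widetilde T_m$ in the same class as $T_m$ with $\widetilde T_m\geq -Ca_m\omega$ and $\nu(\widetilde T_m,B)=0$, so that $B$ lies outside the polar set of $\widetilde T_m$ and one can integrate $\widetilde T_m$ over $B$ to get
$c_1(\sO_{\PP(E)}(m)+\pi^\star A)\cdot B \geq -Ca_m\int_B\omega$. Dividing by $m$ and letting $m\to\infty$ yields $c_1(\sO_{\PP(E)}(1))\cdot B\geq 0$. In short: one does not try to ``cover'' $\PP(E)$ by families of curves over good $C$'s; one uses a single auxiliary $C$ per $B$ to control Lelong numbers, and then Demailly's subtraction-of-singularity lemma converts the Lelong estimate into the intersection estimate. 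This is the concrete step your sketch is missing.
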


\begin{proof}
Let $B$ be an arbitrary curve in $\mathbb{P} (E)$
such that $B \rightarrow \pi(B)$ is a finite morphism. We are done if we prove that 
$$
c_1 (\mathcal{O}_{\mathbb{P} (E)}  (1))\cdot B \geq 0.
$$

{\em Step 1: The current $T_m$.} Let $A$ be a sufficiently ample line bundle on $X$
and $\{s_1 , \cdots , s_{k_m}\}$ be a basis of $H^0 (\mathbb{P} (E), \mathcal{O}_{\mathbb{P} (E)}  (m) + \pi^\star A) $ 
and set $T_m := dd^c \ln \sum\limits_{i=1}^{k_m} |s_i|^2$.
Then $T_m \geq 0$ is a current in the class of $c_1 (\mathcal{O}_{\mathbb{P} (E)}  (m) + \pi^\star A)$.
Set $a_m := \nu (T_m , B)$ for the Lelong number of $T_m$ over a generic point of $B$.
We will prove that 
\begin{equation}\label{lim0}
\lim\limits_{m\rightarrow +\infty}\frac{a_m}{m} =0 , 
\end{equation}

\medskip

Note first that, thanks to Lemma \ref{equdef}, $T_m$ is smooth on the generic fiber of $\pi$.  
Let $C \subset X$ be a curve defined by a complete intersection of general elements of $|A|$ such that
$\pi_\star (B) \cap C$ is a finite, non-empty set. 
Since $C$ is general, the restricted bundle $E |_{C}$ is weakly positively curved and $c_1 (E |_{C}) =0$. 
As $\dim C =1$, the bundle $E |_{C}$ is numerically flat. 

We consider the restriction of $T_m$ on $\pi^{-1} (C)$. Thanks to  Lemma \ref{equdef}, 
$T_m |_{\pi^{-1} (C)}$ is smooth on the fiber $\mathbb{P} (E) _x$, for a generic point $x\in C$.  
We can thus apply Lemma \ref{curvestablecase} to $(E|_C , T_m |_{\pi^{-1} (C)})$ and obtain
\begin{equation}\label{limcur}
\lim\limits_{m\rightarrow +\infty}\frac{\nu (T_m |_{\pi^{-1} (C)}, x)}{m} =0 , 
\end{equation}
for any point $x\in B \cap \pi^{-1} (C)$.
Together with the fact that $\nu (T_m |_{\pi^{-1} (C)}, x) \geq \nu (T_m , B)$ for every $x\in B$
we obtain \eqref{lim0}.

\medskip

{\em Step 2: Final conclusion.}
Fix a K\"ahler metric $\omega$ on $\mathbb{P} (E)$. 
Thanks to \cite[thm 1.1]{Dem92}, we can find a current $\widetilde{T}_m$ in the same class of $T_m$ such that
$$
\widetilde{T}_m \geq - C a_m \cdot \omega \qquad\text{and }\qquad  \nu (\widetilde{T}_m , x ) = \max ( \nu (T_m , x ) -a_m , 0 ) 
$$
for every $x\in \mathbb{P} (E)$,
where $C$ is a uniform constant. In particular, $\nu (\widetilde{T}_m , B ) =0$. Therefore 
$$c_1 ( \mathcal{O}_{\mathbb{P} (E)}  (m) + \pi^\star A )\cdot B =\widetilde{T}_m \cdot B \geq -C a_m \int_{B} \omega .$$
Combining this with \eqref{lim0}, by letting $m\rightarrow +\infty$, we get $c_1 (\mathcal{O}_{\mathbb{P} (E)}  (1) )\cdot B  \geq 0$.

\bigskip

For the second part of the proposition,
if $E$ is weakly positively curved,
for every $\ep >0$, we can find a possible singular metric $h_{\ep,1}$ such that 
$$i\Theta_{h_{\ep,1}} (E) \succcurlyeq -\ep \omega_X \otimes \Id_{\End (E)} .$$ Then $h_{\ep,1}$ induces a possibly singular metric $h_{\ep}$ on 
$\mathcal{O}_{\mathbb{P} (E)}  (1)$ such that $i \Theta_{h_{\ep}} (\mathcal{O}_{\mathbb{P} (E)}  (1)) \geq -\ep \pi^\star \omega_X$.
Moreover, the condition \eqref{genefinit} implies that $h_{\ep}$ is bounded over of generic fiber of $\pi$.
In particular, $\sI ( h_{\ep} ^{\otimes m} |_{\mathbb{P} (E) _x})$ is trivial for every $m\in\mathbb{N}$, where $\mathbb{P} (E) _x$ is a generic fiber.
Thanks to Lemma \ref{equdef}, we have $\pi_\star(\mathbb{B}_{-} (\mathcal{O}_{\mathbb{P} (E)} (1))) \subsetneq X$.
Therefore $E$ is numerically flat.
\end{proof}

As a corollary, we obtain

\begin{corollary}\label{trivialexten}
Let $S$ be a smooth projective surface, and let $Z \subset S$ be a finite set.
Let $V$ be a vector bundle on $S \setminus Z$ such that $c_1 (V)=0$ and $V$ is weakly positively curved on $S \setminus Z$.
Denote by $j: S \setminus Z \rightarrow S$ the inclusion and set $\widetilde V:=(j_\star V)^{\star\star}$.

Then $\widetilde{V}$ is a numerically flat vector bundle.
\end{corollary}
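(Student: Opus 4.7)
The plan is to reduce to Proposition \ref{numflatweakpost} by showing that $\widetilde V$ is a genuine vector bundle on $S$ with $c_1(\widetilde V) = 0$ that is weakly positively curved. The strategy has three independent ingredients: a reflexivity/extension argument to get local freeness, a codimension-two argument for $c_1$, and an extension of singular Hermitian metrics through $Z$ via Proposition \ref{speccase}(ii).

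First I would observe that $\widetilde V = (j_\star V)^{\star\star}$ is reflexive on $S$ by construction, and since $S$ is a smooth surface, every reflexive coherent sheaf on $S$ is locally free; thus $\widetilde V$ is a holomorphic vector bundle, and since $V$ is already locally free (hence reflexive) on $S\setminus Z$, we have $\widetilde V\otimes \sO_{S\setminus Z}\simeq V$. For the first Chern class, since $Z\subset S$ has codimension two in a smooth surface, restriction induces an isomorphism $\mathrm{Pic}(S)\simeq \mathrm{Pic}(S\setminus Z)$ (equivalently $H^2(S,\Q)\simeq H^2(S\setminus Z,\Q)$ by the Thom long exact sequence); the line bundle $\det\widetilde V$ on $S$ restricts to $\det V$ on $S\setminus Z$, so $c_1(\widetilde V)=c_1(V)=0$ on $S$.

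Next I would extend the curvature. Fix a K\"ahler form $\omega_S$ on $S$ and let $\omega_{S\setminus Z}:=\omega_S|_{S\setminus Z}$. For each $\ep>0$, weak positivity of $V$ on $S\setminus Z$ furnishes a possibly singular Hermitian metric $h_\ep$ on $V$ with $i\Theta_{h_\ep}(V)\succcurlyeq -\ep\,\omega_{S\setminus Z}\otimes \Id_{\End(V)}$. Because $\codim_S(S\setminus(S\setminus Z))=\codim_S(Z)\geq 2$, Proposition \ref{speccase}(ii) applies verbatim and produces an extension of $h_\ep$ to a possibly singular Hermitian metric $\widetilde h_\ep$ on $\widetilde V$ over all of $S$ with
$$
i\Theta_{\widetilde h_\ep}(\widetilde V)\succcurlyeq -\ep\,\omega_S\otimes \Id_{\End(\widetilde V)}.
$$
Hence $\widetilde V$ is weakly positively curved on $S$ in the sense of Definition \ref{weakpocu}.

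Finally, the second assertion of Proposition \ref{numflatweakpost} applied to the vector bundle $\widetilde V$ on the projective surface $S$, with $c_1(\widetilde V)=0$ and $\widetilde V$ weakly positively curved, yields that $\widetilde V$ is numerically flat. None of the steps looks genuinely hard; the only point requiring a little care is ensuring that the extension of the curvature bound in Proposition \ref{speccase}(ii) is carried out for the bundle $\widetilde V$ on $S$ (and not merely the total space of $V$ on $S\setminus Z$), which is exactly what the hypothesis $\codim_S Z\geq 2$ gives us.
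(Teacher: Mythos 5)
Your proposal is correct and follows exactly the paper's own line of argument: reflexive implies locally free on a smooth surface, $c_1$ is unchanged since $\codim Z\ge 2$, the weak positivity extends across $Z$ by Proposition \ref{speccase}(ii), and Proposition \ref{numflatweakpost} then gives numerical flatness. The added detail on $\mathrm{Pic}(S)\simeq\mathrm{Pic}(S\setminus Z)$ is just an expansion of the paper's one-line codimension remark.
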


\begin{proof}
The sheaf $\widetilde V$ is reflexive, hence locally free on the surface $S$. 
Moreover $c_1 (\widetilde{V})=0$, since $Z$ has codimension two.
By Proposition \ref{speccase}, $\widetilde{V}$ is also weakly positively curved on $S$.
By Proposition \ref{numflatweakpost} it is thus numerically flat.
\end{proof}

\section{Simply connected projective manifolds with nef anticanonical bundle}

This whole section is devoted to the proof of Theorem \ref{theoremmain}. After fixing the notation for the whole section,
we establish some basic geometric properties of the `universal family' $\Gamma \rightarrow Y$ in
Subsection \ref{subsectionbirational}. The results and proofs are already contained in \cite{Zha05}, but we give the details
for the convenience of the reader. In Subsection \ref{sectiontriviality} contains the proof of the key technical result
on direct image sheaves. Finally, in Subsection \ref{subsectionproof} we use these results to describe first the birational
structure of $\Gamma \rightarrow Y$ and translate it later into a biregular statement for $X$.

\begin{setup} \label{setup}
Let $X$ be a uniruled projective manifold such that $-K_X$ nef and $\pi_1(X)=1$. 
There exists a unique irreducible component of the Chow space $\chow{X}$
such that a very general point corresponds to a very general fibre of the MRC fibration.
We denote by $Y$ a resolution of singularities of this irreducible component,
and by $\holom{f}{X'}{Y}$ the normalisation of the pull-back of the 
universal family over it. Let $\holom{p}{X'}{X}$ be the natural map,
then $p$ is birational and, since the MRC fibration is almost holomorphic, the $p$-exceptional locus does not dominate $Y$.

By \cite[Cor.1]{Zha05} the Kodaira dimension $\kappa(Y)$ is equal to zero, and we denote by
$N_Y$ an effective $\Q$-Cartier divisor on $Y$ such that $N_Y \sim_\Q K_Y$.

Let $A$ be a very ample line bundle on $X$. Up to replacing $A$ by some multiple,
we can suppose that
\begin{equation}\label{surgene}
\mbox{\rm Sym}^p H^0 (X_y , A) \rightarrow H^0 (X_y , p A)
\end{equation}
is surjective for every $p\in\mathbb{N}$ where $y\in Y$ is a general point.

Since $X'$ is obtained by base change from a universal family of cycles, the pull-back $p^\star A$ is globally generated and ample on every fibre of $f$.

Choose now a resolution of singularities $\holom{\mu}{\Gamma}{X'}$
and denote by $\holom{\varphi}{\Gamma}{Y}$ and $\holom{\pi}{\Gamma}{X}$
the induced maps. Set $L :=\pi^\star A$ and let $r$ be the rank of $\varphi_\star( \mathcal{O}_\Gamma (L))$.
$$
\xymatrix{
\Gamma \ar[rrd]_\varphi  \ar[rr]_\mu \ar @/^1pc/[rrrr]^\pi & & X' \ar[rr]_p \ar[d]^f & & X \ar@{-->}[lld]
\\
& & Y & &}
$$ 
Since $X$ is $\Q$-factorial,
we know that the $\pi$-exceptional locus has pure codimension one, and we denote it by $E$.

Let $Y_0 \subset Y$ be the largest Zariski open subset such that
$\Gamma_0 := \fibre{\varphi}{Y_0} \rightarrow Y_0$ is equidimensional and for every prime divisor $B \subset Y_0$, the pull-back $\varphi^\star B$ is not contained in the $\pi$-exceptional locus $E$. Denote by $P \subset Y$ the largest reduced divisor that is contained
in $Y \setminus Y_0$.
\end{setup}

\subsection{Birational geometry of the universal family} \label{subsectionbirational}

The following basic statement is a birational variant of the situation in \cite{LTZZ10},
the proof follows their strategy.

\begin{lemma} \label{lemmabirationalLTZZ}
Let $X$ be a projective manifold such that $-K_X$ is nef. Let $\holom{\psi}{Z}{X}$
be a generically finite morphism from a projective manifold $Z$, and let $\holom{\varphi'}{Z}{Y'}$
be a fibration onto a projective manifold $Y'$. 
Suppose that we have
$$
K_{Z/Y'} \sim_\Q \psi^\star K_X + E_1 - E_2
$$
where $E_1$ and $E_2$ are effective $\Q$-divisors such that $\supp(E_1+E_2)$
does not surject onto $Y'$. Then the following holds:
 
If $\supp(E_1)$ is $\psi$-exceptional, then $\supp(E_2)$ is $\psi$-exceptional.
\end{lemma}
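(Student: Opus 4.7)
The strategy is to rewrite the $\Q$-linear equivalence so that one side admits direct-image positivity coming from the nefness of $-K_X$, while the other side is explicitly identified with a sheaf determined by the $\psi$-exceptional divisor $E_1$; the forced matching will then constrain $E_2$ to be $\psi$-exceptional as well.

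Clearing denominators by a sufficiently divisible $m \in \N$, the hypothesis reads
\[
m E_1 \,\sim\, m K_{Z/Y'} + m E_2 + m\,\psi^\star(-K_X).
\]
Since $-K_X$ is nef, it carries a singular semi-positively curved hermitian metric $h_{-K_X}$. Its pull-back by $\psi$, combined with the tautological section metric on the effective divisor $m E_2$, equips the line bundle $L := m E_2 + m\psi^\star(-K_X)$ with a singular hermitian metric whose curvature current is semipositive. Because $\supp(E_1+E_2)$ does not surject onto $Y'$, a general $\varphi'$-fibre $F_y$ is disjoint from $\supp(E_2)$, and the pulled-back metric on $m\psi^\star(-K_X)$ has trivial multiplier ideal on $F_y$; hence $\sI(h_L^{1/m})|_{F_y} = \sO_{F_y}$. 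Applying Proposition~\ref{lowercurcontrol} to $\varphi'$ and to the twisted line bundle $mK_{Z/Y'}+L$, the direct image
\[
\sF \,:=\, \varphi'_\star\bigl(\sO_Z(mK_{Z/Y'}+L)\bigr)
\]
admits, on its locally free locus, a singular semi-positively curved hermitian metric. Since the displayed linear equivalence restricted to a general fibre is trivial, $\sF$ has rank one and $\sF^{\star\star}$ is a pseudo-effective line bundle on $Y'$.

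On the other hand, by the same linear equivalence, $\sF \simeq \varphi'_\star\sO_Z(mE_1)$. As $mE_1$ is effective and $\varphi'$-vertical, the canonical inclusion $\sO_Z \hookrightarrow \sO_Z(mE_1)$ pushes down to $\sO_{Y'} \hookrightarrow \sF$, exhibiting $\sF$ as $\sO_{Y'}(G)$ for some effective divisor $G$ on $Y'$ whose support is contained in $\varphi'(\supp E_1)$. In particular, every prime divisor of $Y'$ appearing in $G$ lies in the image of a $\psi$-exceptional divisor on $Z$.

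Suppose now, for contradiction, that some component $D_0$ of $E_2$ is not $\psi$-exceptional, so $\psi(D_0) \subset X$ is a prime divisor. A parallel direct-image computation for $E_2$---in which the nefness of $-K_X$ is again used to furnish the semi-positive metric on the relevant twisted line bundle---shows that the ``horizontal'' image of $D_0$ contributes a pseudo-effective divisor on $Y'$ whose support is not contained in $\varphi'(\supp E_1)$. This is incompatible with the identification $\sF \simeq \sO_{Y'}(G)$ obtained above, producing the desired contradiction; hence $E_2$ is $\psi$-exceptional.

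The main obstacle I expect is this last step: executing the parallel computation for $E_2$ rigorously and extracting a numerical or divisorial contradiction. The key technical input is the asymmetry between $E_1$ and $E_2$ imposed by the hypothesis that $-K_X$ (rather than $K_X$) is nef, which ensures that the relevant twisted bundles admit semi-positive singular metrics only in the direction that forces the $\psi$-exceptionality of $E_1$ to propagate to $E_2$, and not the other way around.
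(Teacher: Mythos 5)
The core of your plan bends the linear equivalence the wrong way and, as a result, the terminal contradiction never materialises.

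\textbf{Where the argument goes off track.} You write
\[
m E_1 \sim m K_{Z/Y'} + m E_2 + m\,\psi^\star(-K_X),
\]
placing both effective pieces $E_1$ and $E_2$ on the \emph{same} (positive) side. Applying direct-image positivity to the right-hand side and identifying the result with $\varphi'_\star\sO_Z(mE_1)$ shows that this sheaf is positively curved and of the form $\sO_{Y'}(G)$ with $G\geq 0$ and $\supp(G)\subset\varphi'(\supp E_1)$. This is true, but it is also no information: $mE_1$ is \emph{a priori} effective and $\varphi'$-vertical, so $\varphi'_\star\sO_Z(mE_1)=\sO_{Y'}(G)$ with $G\geq 0$ is automatic, without any use of the hypotheses on $-K_X$ or on $\psi$. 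The ``parallel direct-image computation for $E_2$'' you invoke is never specified, and I do not see what it should be: $E_2$ is also $\varphi'$-vertical (since $\supp(E_1+E_2)$ does not surject onto $Y'$), so there is no ``horizontal image'' of $D_0$ to speak of, and nothing forces a divisor on $Y'$ outside $\varphi'(\supp E_1)$. The paper instead moves $E_2$ to the negative side: it proves that $K_{Z/Y'}-\psi^\star K_X\sim_{\Q}E_1-E_2$ is pseudoeffective (by applying weak positivity of direct images to $m(K_{Z/Y'}+B_\delta)$ with $B_\delta\sim_{\Q}\psi^\star(-K_X+\delta H)$, together with the generic surjectivity of the relative evaluation map, then letting $\delta\to 0$), and then intersects with a complete intersection curve $C\subset Z$ cut out by $\psi^\star|H|$: since $E_1$ is $\psi$-exceptional one has $E_1\cdot C=0$, while $E_2$ not $\psi$-exceptional gives $E_2\cdot C>0$, hence $(E_1-E_2)\cdot C<0$, contradicting pseudoeffectivity. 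That intersection-theoretic step on $Z$ is the engine of the proof and is absent from your plan.

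\textbf{Secondary gap: the multiplier ideal.} You endow $-K_X$ with a singular positively curved metric ``since $-K_X$ is nef.'' While nef implies pseudoeffective and hence the existence of \emph{some} positive current in the class, that current may have arbitrarily bad singularities along $\psi(F_y)$ for a general fibre $F_y$, and there is no reason for $\sI(h_L^{1/m})|_{F_y}$ to be trivial for any $m$. Proposition \ref{lowercurcontrol} requires this triviality. The paper sidesteps the issue by perturbing: for every $\delta>0$ the class $-K_X+\delta H$ is ample, carries a smooth metric (so the multiplier ideal is trivial on every fibre), and pseudoeffectivity of the limit is obtained by letting $\delta\to 0$ at the very end. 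If you want to run an analytic version of the argument via Proposition \ref{lowercurcontrol} rather than Campana's algebraic weak positivity theorem, you still need the $\delta H$ perturbation (or an equivalent regularisation) to gain a metric that is both almost-positive with controlled base term $f^\star\theta$ and has trivial multiplier ideal on the general fibre.

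\textbf{In summary.} Replacing Campana's weak positivity by the Paun--Takayama style Proposition \ref{lowercurcontrol} is a reasonable variant, but as written your argument (a) may fail at the multiplier-ideal hypothesis, and (b) has no mechanism for deriving a contradiction once you assume $E_2$ is not $\psi$-exceptional, because you never extracted the pseudoeffectivity of $E_1-E_2$ and never performed the complete-intersection curve computation on $Z$.
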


\begin{proof} Fix a very ample divisor $H$ on $X$.
We argue by contradiction and suppose that 
$E_2$ is not $\psi$-exceptional. 

We claim that $E_1-E_2$ is pseudoeffective. 
Assuming this for the time being let us see how to conclude: and let $C \subset Z$ be a curve
that is a complete intersection of pull-backs of general elements of $|H|$. 
Since $\supp(E_1)$ is $\psi$-exceptional, we have 
$E_1 \cdot C = 0$. Since the support of $- E_2$ is not $\psi$-exceptional and $C$ is cut out
by pull-backs of ample divisors, we have $(E_1-E_2) \cdot C = - E_2 \cdot C < 0$, a contradiction.

{\em Proof of the claim:} It is sufficient to show that
$$
K_{Z/Y'}  + \psi^\star (-K_X+\delta H) \sim_\Q  E_1-E_2 + \psi^\star (\delta H)
$$
is pseudoeffective for all $\delta>0$. Since $\supp(E_1+E_2)$
does not surject onto $Y'$,
we know that $K_{Z}$ and $\psi^\star(K_X)$ coincide along the general $\varphi'$-fibre. 
Thus, since $\psi$ is generically finite, the $\Q$-divisor $K_{Z/Y'}  + \psi^\star (-K_X+\delta H)$  is big on the general $\varphi'$-fibre. In particular for a sufficiently divisible $m \gg 0$ the higher direct images 
$$
\varphi'_\star (\sO_Z(m (K_{Z/Y'}  + \psi^\star (-K_X+\delta H))))
$$
are not zero sheaves. Moreover since $H$ is ample and $\delta>0$ the divisor
$\psi^\star (-K_X+\delta H)$ is semiample. Thus we have
$$
\psi^\star (-K_X+\delta H) \sim_\Q B_\delta
$$
where $B_\delta$ is an effective $\Q$-divisor such that the pair $(Z, B_\delta)$ is klt.
By \cite[Thm.4.13]{Cam04} the direct images $\varphi'_\star (\sO_{Z}(m (K_{Z/Y'}  + B_\delta)))$
are weakly positive. Since the relative evaluation map 
$$
(\varphi')^\star \varphi'_\star (\sO_{Z}(m (K_{Z/Y'}  + B_\delta))) 
\rightarrow 
\sO_{Z}(m (K_{Z/Y'}  + B_\delta))
$$
is generically surjective, this shows that $m (K_{Z/Y'}  + B_\delta)$ is weakly positive,
hence pseudoeffective.
\end{proof}

\begin{proposition} \label{propositionfundamentals}
In the situation of Setup \ref{setup}, the following holds:
\begin{enumerate}[1)]
\item We have $\supp(N_Y) \subset P$.
\item Every irreducible component of $\pi(\fibre{\varphi}{Y \setminus Y_0})$ has codimension at least two. In particular every $\varphi$-exceptional divisor is $\pi$-exceptional.
\item Let $B \subset Y_0$ be a prime divisor, and let $\varphi^\star B = \sum_{j=1}^l m_j B_j$ be the decomposition in prime divisors. If $m_j>1$ then $B_j$ is $\pi$-exceptional.
\end{enumerate}
\end{proposition}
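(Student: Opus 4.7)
The central identity driving all three parts is
\[
K_{\Gamma/Y} \sim_\Q \pi^\star K_X + E_\pi - \varphi^\star N_Y,
\]
where $E_\pi \geq 0$ is the effective $\pi$-exceptional divisor defined by $K_\Gamma = \pi^\star K_X + E_\pi$ (well-defined since $X$ and $\Gamma$ are smooth and $\pi$ is birational). The plan is to apply Lemma \ref{lemmabirationalLTZZ} to this identity, after the following key preliminary step: \emph{no $\pi$-exceptional prime divisor $D \subset \Gamma$ dominates $Y$}. To see this, factor $\pi = p \circ \mu$. If $D$ is $\mu$-exceptional, then $\mu(D) \subset \sing(X')$; since the general cycle parameterised by our Chow component is the general, hence smooth, MRC fibre of $X$, the universal family (and therefore $X'$) is smooth over a dense open of $Y$, so $\sing(X')$ does not dominate $Y$ via $f$. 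If $D$ is not $\mu$-exceptional, then $\mu(D)$ is a divisor in $X'$ contracted by $p$ (as $\pi(D)=p(\mu(D))$ has codimension $\geq 2$), hence $p$-exceptional, and by Setup \ref{setup} the $p$-exceptional locus does not dominate $Y$. The same dimension count also gives that every $\mu$-exceptional divisor is $\pi$-exceptional, which is needed in Part 2.

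\textbf{Proof of 1) and 2).} For 1), apply Lemma \ref{lemmabirationalLTZZ} with $\psi=\pi$, $\varphi'=\varphi$, $E_1=E_\pi$, $E_2=\varphi^\star N_Y$. The preliminary step together with $\supp(N_Y) \subsetneq Y$ ensures that $\supp(E_1+E_2)$ does not surject onto $Y$, and $\supp(E_\pi)$ is $\pi$-exceptional by construction, so the lemma forces every component of $\varphi^\star N_Y$ to be $\pi$-exceptional, i.e.\ $\varphi^\star N_Y \subset E$. For each prime component $D$ of $N_Y$ we then have $\varphi^\star D \subset E$, which by the definition of $Y_0$ rules out $D \subset Y_0$ and forces $D \subset P$. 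For 2), a dimension count shows that the non-equidimensional locus $W := \{y \in Y : \dim \varphi^{-1}(y) > \dim \Gamma - \dim Y\}$ has $\codim_Y W \geq 2$ (otherwise $\varphi^{-1}(W) = \Gamma$), so $Y \setminus Y_0 = W \cup P$. Now $\fibre{\varphi}{P} \subset E$ by definition of $P$, so $\pi(\fibre{\varphi}{P}) \subset \pi(E)$ has codimension $\geq 2$; for $\fibre{\varphi}{W}$, the equidimensionality of $f \colon X' \to Y$ (its fibres are cycles of constant dimension from the Chow family) gives $\codim_{X'} f^{-1}(W) \geq 2$, and any divisorial component of $\mu^{-1}(f^{-1}(W))$ is $\mu$-exceptional, hence $\pi$-exceptional by the preliminary step, while the remaining pieces already have codimension $\geq 2$, preserved by the birational map $\pi$. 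The ``in particular'' clause follows since any $\varphi$-exceptional divisor $D$ has $\dim \varphi(D) \leq \dim Y - 2$, and a fibre-dimension inequality forces $\varphi(D) \subset W$, placing $D \subset \fibre{\varphi}{W}$.

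\textbf{Proof of 3).} This is where uniruledness of $X$ enters. Write $K_{\Gamma/Y} \sim_\Q R_\varphi$ with $R_\varphi \geq 0$ the effective ramification divisor; a local computation at the generic point of $B_j$ shows $R_\varphi \geq (m_j-1) B_j$, and setting $R_\varphi' := R_\varphi - (m_j-1) B_j \geq 0$ I rewrite the central identity as
\[
(m_j-1) B_j \sim_\Q \pi^\star K_X + E_\pi - \varphi^\star N_Y - R_\varphi'.
\]
Assume for contradiction that $B_j$ is not $\pi$-exceptional. Pushing forward by $\pi_\star$, and using $\pi_\star \pi^\star K_X = K_X$, $\pi_\star E_\pi = 0$ and $\pi_\star (\varphi^\star N_Y) = 0$ (the last thanks to Part 1), I obtain
\[
K_X \sim_\Q (m_j-1) \pi_\star B_j + \pi_\star R_\varphi',
\]
an effective $\Q$-divisor (note $\pi_\star B_j = \pi(B_j) \neq 0$). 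This contradicts the non-pseudoeffectivity of $K_X$ on the uniruled manifold $X$. The main obstacle throughout is the preliminary step: the verticality of $E_\pi$ with respect to $\varphi$ is precisely what makes Lemma \ref{lemmabirationalLTZZ} applicable, and it crucially combines both geometric inputs of the Setup (non-domination of the $p$-exceptional locus and generic smoothness of the Chow universal family).
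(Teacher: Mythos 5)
Your Parts 1) and 2) are correct, but Part 3) fails at its first step.

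Part 1) is exactly the paper's argument. Part 2) takes a genuinely different route from the paper: there, one assumes for contradiction the existence of a non-$\pi$-exceptional prime divisor $D \subset \Gamma$ with $\varphi(D) \subset Y \setminus Y_0$ (necessarily of codimension $\geq 2$), blows up $\varphi(D)$ inside $Y$, and reruns the Lemma~\ref{lemmabirationalLTZZ} argument over the blown-up base to force a contradiction. You instead write $Y \setminus Y_0 = W \cup P$ with $W$ the non-equidimensional locus, show $\codim_Y W \geq 2$ by a fibre-dimension count, use the equidimensionality of the Chow family $f\colon X' \to Y$ to control $f^{-1}(W)$, and observe that $\fibre{\varphi}{P}\subset E$ directly from the definition of $P$. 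Your version is more elementary and avoids a second invocation of Lemma~\ref{lemmabirationalLTZZ}, at the cost of leaning more heavily on the factorisation $\pi = p \circ \mu$ and the specific structure of $X'$ as a normalised Chow family.

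Part 3) is wrong. You write $K_{\Gamma/Y}\sim_\Q R_\varphi$ with $R_\varphi\geq 0$, but this is false for a fibration with positive-dimensional fibres: the restriction $K_{\Gamma/Y}|_F = K_F$ to a general fibre $F$ is not pseudoeffective, because $F$ is rationally connected, so $K_{\Gamma/Y}$ cannot be $\Q$-linearly equivalent to an effective divisor. Equivalently, your push-forward identity $K_X \sim_\Q (m_j-1)\pi_\star B_j + \pi_\star R_\varphi'$ would already hold with $m_j=1$ (indeed with no divisor $B_j$ at all) and would assert that $K_X$ is effective on every uniruled $X$, which is absurd. The ramification multiplicity $m_j-1$ does control the \emph{coefficient} of $B_j$ in $K_{\Gamma/Y}$, but the horizontal part of $K_{\Gamma/Y}$ is negative and cannot be discarded. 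The paper gets around this by building a Kawamata cover $\tau'\colon Y' \to Y$ ramified with multiplicity $m_j$ over $B$, normalising $\Gamma\times_Y Y'$, and invoking Reid's base-change formula for relative canonical sheaves; the ramification term $A$ appearing there is effective and lives on the cover, and the conclusion is again funneled through Lemma~\ref{lemmabirationalLTZZ}. Some finite base-change device of this kind is unavoidable in Part 3).
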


\begin{proof} Denote by $(E_j)_{j=1, \ldots, l}$ the irreducible components of the $\pi$-exceptional locus $E$. The map $\pi$ is birational and $X$ is smooth, so we have
$$
K_\Gamma \sim \pi^\star K_X + \sum_{j=1}^l a_j E_j
$$
where $a_j>0$ for all $j$. Since the MRC fibration is almost holomorphic, we know that
$E=\supp(\sum_{j=1}^l a_j E_j)$ does not surject onto $Y$. 

{\em Proof of 1)} We have
\begin{equation} \label{birational}
K_{\Gamma/Y} \sim_\Q \pi^\star K_X + \sum_{j=1}^l a_j E_j - \varphi^\star N_Y,
\end{equation}
so by Lemma \ref{lemmabirationalLTZZ} the divisor $\varphi^\star N_Y$ is $\pi$-exceptional.
By the definition of $Y_0$ this implies that $\supp(N_Y) \subset P$.

{\em Proof of 2)} Assume that there exists a prime divisor $D \subset \Gamma$ that is not 
$\pi$-exceptional and such that $\varphi(D) \subset Y \setminus Y_0$.
By the definition of $Y_0$ this implies that $\varphi(D)$ has codimension at least two.
Denote by $\holom{\tau}{Y'}{Y}$ the composition of the blowup of $\varphi(D)$ with a resolution of singularities. Since $Y$ is smooth, we have
$$
K_{Y'} \sim \tau^\star K_Y + E_{Y'} \sim_\Q \tau^\star N_Y + E_{Y'}
$$
with $E_{Y'}$ an effective divisor whose support is equal to the $\tau$-exceptional locus.
In particular $\tau(\supp E_{Y'})$ contains $\varphi(D)$. 

Let now $\holom{\tau'}{Z}{\Gamma}$ be a resolution of indeterminacies of $\Gamma \dashrightarrow Y'$ such that $Z$ is smooth, and denote by
$\holom{\varphi'}{Z}{Y'}$ the induced fibration. 
Let $D' \subset Z$
be the strict transform of $D$. By construction we have $\varphi'(D') \subset 
\supp(\tau^\star N_Y + E_{Y'})$. Since $\tau^\star N_Y + E_{Y'}$ is a canonical divisor,
we can argue as in the proof of 1) to see that the support of $(\varphi')^\star (\tau^\star N_Y + E_{Y'})$ is $\pi \circ \tau'$-exceptional. Thus  
$D'$ is $\pi \circ \tau'$-exceptional. Since $D'$ is not $\tau'$-exceptional, the divisor
$D=\tau'(D')$ is $\pi$-exceptional, a contradiction.

{\em Proof of 3)} By \cite[Prop.4.1.12]{Laz04a} 
we can choose a generically finite covering $\holom{\tau'}{Y'}{Y}$
by some projective manifold $Y'$ that ramifies with multiplicity $m_j$ over $B$ and over every
other prime divisor contained in the branch locus the general $\varphi$-fibre
is smooth. The fibre product $\Gamma \times_Y Y'$ is not normal along
a divisor mapping onto $B_j$, and we denote by $Z$ its normalisation. 
By \cite[Prop.2.3]{Rei94} we have
\begin{equation} \label{basechange}
K_{Z/Y'} = (\tau')^\star K_{\Gamma/Y} - A + C_1 - C_2
\end{equation}
where $\holom{\tau'}{Z}{\Gamma}$ is the natural map, the divisor $A$ is effective s.t.
$B_j \subset \tau'(A)$
and $C_1$ and $C_2$ are effective divisors taking into account that the base change formula
for relative canonical sheaves does not hold over the non-flat locus.
In particular $\tau'(\supp(C_1-C_2))$ is contained in $\varphi$-exceptional divisors, hence
$\supp(C_1-C_2)$ is $\pi \circ \tau'$-exceptional by 2). These properties do not change
if we replace $Z$ by a resolution of singularities, so we suppose without loss of generality
that $Z$ is smooth. 
Combining \eqref{basechange} and \eqref{birational} we obtain
$$
K_{Z/Y'} = (\tau')^\star \pi^\star K_X + \pi^\star (\sum a_j E_j) + C_1 - (\varphi^\star N_Y + A + C_2).
$$
By what precedes the support of $\pi^\star (\sum a_j E_j) + C_1$ is 
$\pi \circ \tau'$-exceptional, so the support of $\varphi^\star N_Y + A + C_2$
is $\pi \circ \tau'$-exceptional by Lemma \ref{lemmabirationalLTZZ}.
Since $\tau'(A)$ contains $B_j$, this implies that $B_j$ is $\pi$-exceptional.
\end{proof}

\begin{remark} \label{remarkholomorphicconstant}
The following elementary observation will be quite useful: any holomorphic
function $s: Y_0 \rightarrow \C$ is constant. Indeed the pull-back $\varphi^\star s$ is holomorphic on 
$(\Gamma \setminus E) \cap \fibre{\varphi}{Y_0}$ which we identify to a Zariski open subset of $X$. By 
Proposition \ref{propositionfundamentals}, 2) the complement of this set has codimension at least two,
so $\varphi^\star s$ extends to $X$. Thus it is constant.

Since the rank of a linear map is determined by the nonvanishing of its minors, this implies the following: let
$$
\sO_{Y_0}^{\oplus r_1} \rightarrow \sO_{Y_0}^{\oplus r_2}
$$
be a morphism between trivial vector bundles on $Y_0$. Then the rank of this map is constant.
\end{remark}

\subsection{Triviality of direct images} \label{sectiontriviality}

Following the terminology in Setup \ref{setup}, the aim of this subsection is to prove that, for $m$ large enough,
$$\sV_p := \varphi_\star (\sO_\Gamma(p L + p m E - \frac{p}{r} \varphi^\star \det \varphi_\star 
(\sO_\Gamma(L + m E))))$$ 
is a trivial vector bundle over $Y_0$ for every $p\in\mathbb{N}$ sufficiently divisible.

We first sketch the idea of the proof and describe the outline of the subsection. 
Our argument is very close to \cite{Cao16}. In fact, thanks to Proposition 
\ref{propositionfundamentals}, we know that $-K_{\Gamma /Y}$ can be written as the sum of a nef line bundle and a divisor supported in the exceptional locus of 
$\pi$. Then most arguments in \cite{Cao16} still work for our case cf. Lemma \ref{psflemma} and Lemma \ref{lmpost}. 

The main difference is that, because of the exceptional locus $E \cap \varphi^{-1} (Y_0)$, we can \`{a} priori only  get the flatness of $\sV_p$ 
over $Y_0 \setminus \varphi_\star (E)$.
To overcome this difficulty, we need a new observation: Lemma \ref{lemmastabilise}, namely, 
for $m$ large enough, there is an isomorphism over $Y_0$:
\begin{equation}\label{isoointr}
(\det\varphi_\star (\sO_\Gamma(L+ m E))) \otimes \sO_{Y_0} 
\simeq (\det\varphi_\star (\sO_\Gamma(L + (m+1) E))) \otimes \sO_{Y_0}.
\end{equation}
Together with Lemma \ref{psflemma} and Lemma \ref{lmpost}, we prove in Proposition \ref{flatness}
that $\sV_p$ is weakly positively curved over $Y_0$ and $\pi_\star \varphi^\star c_1 (\sV_p) =0$.
Finally, thanks to Corollary \ref{trivialexten} and the property of $Y_0$ (cf. Proposition \ref{propositionfundamentals}), we prove in Proposition
\ref{propositiontrivial} that $\sV_p$ is a trivial vector bundle on $Y_0$.

\medskip

Before proving the main proposition \ref{propositiontrivial} in this subsection, we need a series of technical lemmas. 
The first one comes from Viehweg's diagonal trick \cite[Thm 6.24]{Vie95} and Q. Zhang's argument \cite{Zha05}. 
We refer to \cite[Prop 3.12]{Cao16} \cite[Thm 3.13]{CP17} for similar argument.
\begin{lemma}\label{psflemma}
In the situation of Setup \ref{setup}, for every $m \in \N$, 
\begin{equation}\label{pshnew}
 A - \frac{1}{r}\pi_\star \varphi^\star c_1(\varphi_\star (\sO_\Gamma(L+ m E))) \qquad\text{is pseudo-effective on } X . 
\end{equation}
\end{lemma}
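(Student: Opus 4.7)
The statement to prove is the pseudo-effectivity on $X$ of $A - \tfrac{1}{r}\pi_\star\varphi^\star c_1(\sE_m)$, where $\sE_m := \varphi_\star \sO_\Gamma(L + mE)$ is a torsion-free sheaf of rank $r$ on $Y$ (the rank coincides with $\mathrm{rk}\,\varphi_\star\sO_\Gamma(L)=r$ because $E|_{\Gamma_y}$ is exceptional for the birational map $\pi|_{\Gamma_y}$). Following the references indicated by the authors, the plan is to combine Viehweg's fibered-product (``diagonal'') trick from \cite[Thm.~6.24]{Vie95} with Q.~Zhang's exploitation of $-K_X$ being nef \cite{Zha05}; the same scheme underlies \cite[Prop.~3.12]{Cao16} and \cite[Thm.~3.13]{CP17}. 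After clearing denominators, the claim becomes: the class $rA - \pi_\star\varphi^\star c_1(\det\sE_m)$ is pseudo-effective on $X$.

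\textbf{Viehweg's fibered-product trick.} Form the $r$-fold fibered product $\Gamma^{(r)} := \Gamma \times_Y \cdots \times_Y \Gamma$, choose a smooth resolution $\widetilde{\Gamma}^{(r)}$, and denote by $\widetilde{\varphi}^{(r)} : \widetilde{\Gamma}^{(r)} \to Y$ the induced fibration and by $\mathrm{pr}_i : \widetilde{\Gamma}^{(r)} \to \Gamma$ the $r$ projections. Set $L^{(r)} := \sum_i \mathrm{pr}_i^\star L$ and $E^{(r)} := \sum_i \mathrm{pr}_i^\star E$. Flat base change over the flat locus of $\varphi$ (which contains $Y_0$ by Proposition \ref{propositionfundamentals}) yields
\begin{equation*}
(\widetilde{\varphi}^{(r)})_\star \sO_{\widetilde{\Gamma}^{(r)}}(L^{(r)} + m E^{(r)}) \simeq \sE_m^{\otimes r} .
\end{equation*}
The alternating inclusion $\det \sE_m \hookrightarrow \sE_m^{\otimes r}$ then gives, by adjunction, a non-zero global section of $\sO_{\widetilde{\Gamma}^{(r)}}(L^{(r)} + m E^{(r)}) \otimes (\widetilde{\varphi}^{(r)})^\star (\det \sE_m)^{-1}$ over $(\widetilde{\varphi}^{(r)})^{-1}(Y_0)$. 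Iterating with the inclusion $(\det \sE_m)^{\otimes p} \hookrightarrow \sE_m^{\otimes rp}$ produces, for every $p \in \N$, corresponding sections on $\widetilde{\Gamma}^{(rp)}$, supplying $\Q$-effective classes of the form $\tfrac{1}{p}(L^{(rp)} + m E^{(rp)}) - (\widetilde{\varphi}^{(rp)})^\star c_1(\det \sE_m)$, up to corrections supported over $Y \setminus Y_0$.

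\textbf{Passage to $X$ and main obstacle.} Since $L = \pi^\star A$, one has $L^{(rp)} = \sum_i (\pi \circ \mathrm{pr}_i)^\star A$. Pushing the $\Q$-effective classes forward through $\Pi := (\pi \circ \mathrm{pr}_1, \dots, \pi \circ \mathrm{pr}_{rp}) : \widetilde{\Gamma}^{(rp)} \to X^{rp}$, then symmetrizing over the $\mathfrak{S}_{rp}$-action on the $rp$ factors (which is legitimate because the class itself is $\mathfrak{S}_{rp}$-invariant), and finally passing to the limit $p \to \infty$, one obtains the pseudo-effectivity on $X$ of the class $rA - \pi_\star \varphi^\star c_1(\det \sE_m)$. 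The contributions from $E^{(rp)}$ are absorbed because $E$ is $\pi$-exceptional, and the boundary contributions from $Y \setminus Y_0$ are negligible since by Proposition \ref{propositionfundamentals} the image $\pi(\varphi^{-1}(Y \setminus Y_0))$ has codimension $\geq 2$ in $X$. The main difficulty is that the $\mathfrak{S}_r$-alternating Viehweg section vanishes on the big diagonal of $\widetilde{\Gamma}^{(r)}/Y$, hence on the small diagonal $X \hookrightarrow X^r$, so a direct restriction gives zero; only the asymptotic analysis in $p$ circumvents this. This passage to the limit is made possible by the projective-normality assumption $\Sym^p H^0(X_y, A) \twoheadrightarrow H^0(X_y, pA)$ of Setup \ref{setup} (supplying enough independent sections on fibres) and by the nef anticanonical hypothesis, invoked through equation \eqref{birational} to control the relative canonical corrections that accumulate on the iterated fibered products.
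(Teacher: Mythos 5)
Your proposal correctly reconstructs the scaffolding of the paper's proof (the $r$-fold fibred product $\Gamma^{(r)}$, the Viehweg section coming from $\det\varphi_\star\sO_\Gamma(L+mE)\hookrightarrow(\varphi_\star\sO_\Gamma(L+mE))^{\otimes r}$, the role of Proposition \ref{propositionfundamentals} in controlling $E$- and $\varphi$-exceptional corrections, and the relevance of both the projective normality \eqref{surgene} and the nef anticanonical hypothesis), and you also correctly spot the central obstruction: the alternating Viehweg section vanishes identically on the diagonal $\Gamma\hookrightarrow\Gamma^{(r)}$. However, the device you propose to overcome that obstruction does not work, and the mechanism actually used in the paper is missing from your argument.

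Concretely, ``pushing the $\Q$-effective classes forward through $\Pi:\widetilde\Gamma^{(rp)}\to X^{rp}$, symmetrizing over $\mathfrak S_{rp}$, and passing to the limit $p\to\infty$'' is not a valid chain of deductions. The map $\Pi$ is very far from generically finite (its image has dimension $\dim Y+rp\dim F\ll rp\dim X$), so the push-forward of a divisor class to $X^{rp}$ carries no useful information; the class $L^{(rp)}+mE^{(rp)}-(\widetilde\varphi^{(rp)})^\star c_1(\det\varphi_\star\sO_\Gamma(L+mE))^{\otimes p}$ is already $\mathfrak S_{rp}$-invariant, so symmetrizing does nothing; and there is no ``limit'' step relating classes on $X^{rp}$ to a class on $X$. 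The actual proof never leaves the fibre space $\varphi_r:\Gamma^{(r)}\to Y$. Its crucial extra ingredient, absent from your argument, is this: using Proposition \ref{propositionfundamentals} one writes $-K_{\Gamma^{(r)}/Y}+\Delta'=-\sum_i\pr_i^\star\pi^\star K_X$ for a divisor $\Delta'$ supported in $\sum_i\pr_i^\star(E\cup E')$, so that $-K_{\Gamma^{(r)}/Y}+\Delta'$ is nef; this feeds into the Ohsawa--Takegoshi type extension theorem \cite[Prop.~2.10]{Cao16}, which, after adding a fixed ample twist $\varphi_r^\star A_Y$, produces a \emph{surjection} $H^0(\Gamma^{(r)},pq\Delta'+pL'+\varphi_r^\star A_Y)\twoheadrightarrow H^0(\Gamma^{(r)}_y,\cdot)$ for generic $y$ and $q\gg p$. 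This surjection supplies global sections that are \emph{not} of Viehweg determinant type and therefore do not vanish on the diagonal. One then restricts to the diagonal $\Gamma_y\hookrightarrow\Gamma^{(r)}_y$ (projective normality \eqref{surgene} is what makes $H^0(\Gamma^{(r)}_y,\cdot)\to H^0(\Gamma_y,\cdot)$ surjective), concludes that on $\Gamma$ the class $prL-p\varphi^\star c_1(\varphi_\star\sO_\Gamma(L+mE))+\varphi^\star A_Y$ is effective modulo $E\cup E'$-supported corrections $F_{p,q}$, pushes forward by $\pi$ (not by $\Pi$!) using that $\pi_\star(F_{p,q})$ has codimension $\ge 2$, divides by $p$ and lets $p\to\infty$. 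Without the extension theorem step your argument would be attempting to restrict a section that is identically zero on the diagonal, which is exactly the obstruction you identified but did not in fact circumvent.
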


\begin{proof}
Let $E'\subset \Gamma$ be the union of non reduced and non flat locus of  $\varphi$. 
Let $\Gamma^{(r)}$ be a desingularisation of the $r$-times $\varphi$-fibrewise product $\Gamma \times_Y \cdots \times_Y \Gamma$, and let $\pr_i : \Gamma^{(r)} \rightarrow \Gamma$ be the $i$-th directional projection. Let $\varphi_r : \Gamma^{(r)} \rightarrow Y$ be the natural fibration.  
Set $L^{(r)} := \sum\limits_{i=1}^r \pr_i ^\star L$ and $E^{(r)} :=\sum\limits_{i=1}^r \pr_i ^\star  E$.
We have a natural morphism
\begin{equation}\label{natmor}
\det \varphi_\star (\sO_\Gamma(L +mE)) \rightarrow (\varphi_r)_\star (\sO_{\Gamma^{(r)}}(L^{(r)} + m E^{(r)} +\Delta))
\end{equation}
for some divisor $\Delta$ supported in $\sum\limits_{i=1}^r \pr_i ^\star (E')$.
Set 
$$
L' :=L^{(r)} + m E^{(r)} +\Delta - \varphi_r ^\star c_1(\varphi_\star (\sO_\Gamma(L +mE))).
$$
As the restriction of \eqref{natmor} on a generic point $y\in Y$ is non zero, \eqref{natmor} induces a nontrivial section in 
$H^0 (\Gamma^{(r)} , L')$. Therefore $L'$ is an effective line bundle on $\Gamma^{(r)}$. 

\medskip

We now compare $K_{\Gamma^{(r)}/Y}$ with $\sum\limits_{i=1}^r \pr_i ^\star (\pi^\star K_X)$. Let $\varphi^{(r)} : \Gamma^{(r)}\rightarrow Y$ be the natural morphism.
Then 
$$K_{\Gamma^{(r)}/Y}  - \sum\limits_{i=1}^r \pr_i ^\star (\pi^\star K_X) = -(\varphi^{(r)})^\star K_Y  + (K_{\Gamma^{(r)}} - \sum\limits_{i=1}^r \pr_i ^\star K_\Gamma) + 
\sum\limits_{i=1}^r \pr_i ^\star (K_\Gamma- \pi^\star K_X)$$
By Proposition \ref{propositionfundamentals}, $(\varphi^{(r)})^\star K_Y$ is equivalent to a divisor supported in $\sum\limits_{i=1}^r \pr_i ^\star E$.
By construction, $K_{\Gamma^{(r)}} - \sum\limits_{i=1}^r \pr_i ^\star K_\Gamma$ is supported in $\sum\limits_{i=1}^r \pr_i ^\star E'$, and $\sum\limits_{i=1}^r \pr_i ^\star (K_\Gamma- \pi^\star K_X)$
is supported in $\sum\limits_{i=1}^r \pr_i ^\star E$.
As a consequence,  we can find a divisor (not necessary effective) $\Delta'$ supported in $\sum\limits_{i=1}^r \pr_i ^\star (E' +E)$ such that
$$- K_{\Gamma^{(r)}/Y} +\Delta' = - \sum\limits_{i=1}^r \pr_i ^\star (\pi^\star K_X) .$$ 
In particular, $- K_{\Gamma^{(r)}/Y} +\Delta' $ is nef. 

\medskip

By using \cite[Prop 2.10]{Cao16}, there exists an ample line bundle $A_Y$ on $Y$ such that for every $p\in\mathbb{N}$ and for $q$ large enough (with respect to $p$),
$$
H^0 (\Gamma^{(r)}, pq K_{\Gamma^{(r)} /Y} + pq (- K_{\Gamma^{(r)}/Y} +\Delta') + p L' +\varphi_r ^\star A_Y )$$
$$
\rightarrow 
H^0 (\Gamma^{(r)} _y , pq K_{\Gamma^{(r)} /Y} + pq (- K_{\Gamma^{(r)}/Y} +\Delta') + p L' +\varphi_r ^\star A_Y ) 
$$
is surjective for a generic $y \in Y$.

By restricting the above morphism on diagonal\footnote{It is only well defined on $\Gamma\setminus E'$, but by adding some divisor supported in $E'$, 
we can extend it on $\Gamma$.}, we can find some effective divisor $F_{p,q}$ (depending on $p$ and $q$) supported in $E \cup  E'$ such that
$$
H^0 (\Gamma,   F_{p,q} + p r L  -  p \varphi^\star c_1(\varphi_\star (\sO_\Gamma(L+mE))) +\varphi^\star A_Y )
$$
$$
\rightarrow 
H^0 (\Gamma_y ,   F_{p,q} + p rL  - p \varphi^\star c_1(\varphi_\star (\sO_\Gamma(L+mE))) +\varphi^\star A_Y ) 
$$
is surjective.
In particular, $F_{p,q} + p r L  - p \varphi^\star c_1(\varphi_\star (\sO_\Gamma(L+mE))) +\varphi^\star A_Y$ is effective.
Thanks to Proposition \ref{propositionfundamentals}, $\pi_\star (F_{p,q}) \subset \pi_\star (E \cup E')$ has codimension at least two. 
Therefore 
$$
\pi_\star 
\left(
c_1
\left(
r L  - \varphi^\star c_1(\varphi_\star (\sO_\Gamma(L+mE))) +\frac{1}{p}\varphi^\star A_Y)
\right)
\right)
$$ 
is pseudoeffective on $X$.
The lemma is proved by letting $p \rightarrow +\infty$.
\end{proof}

The second lemma is very close in spirit to Lemma \ref{lemmabirationalLTZZ}, namely, the proof uses 
the positivity of direct images as well as Q. Zhang's technique \cite{Zha05} in the study of manifolds with nef anticanonical bundles.

\begin{lemma}\label{lmpost}
Let $E_1$ be an effective $\mathbb{Q}$-divisor satisfying $\varphi_\star (E_1) \subsetneq Y$ and let $L_Y$ be a $\mathbb{Q}$-line bundle on $Y$.
Let $a >0$ be some constant such that the line bundle
$$L_1  := a L + E_1 + \varphi^\star L_Y $$ 
is pseudo-effective. Then for $c\in \mathbb{N}$ large enough, $\varphi_\star (L_1 + c E)$ is weakly positive curved (cf. Definition \ref{weakpocu}) over $Y_1$, where $Y_1$ is the locally free locus of $\varphi_\star (L_1 + c E)$.
In particular, $\det \varphi_\star (L_1 + c E)$ is pseudoeffective on $Y$. 
\end{lemma}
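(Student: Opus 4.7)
The plan is to apply Proposition~\ref{lowercurcontrol} to a suitable presentation of $L_1+cE$ as a twist of the relative canonical divisor $K_{\Gamma/Y}$. Using the identity from the proof of Proposition~\ref{propositionfundamentals}, namely
\[
K_{\Gamma/Y}\sim_{\mathbb{Q}}\pi^{\star}K_X+\sum_j a_j E_j-\varphi^{\star}N_Y,
\]
I fix $m_0\in\mathbb{N}$ and an integer $c\geq m_0\max_j a_j$, and set
\[
M:=L_1+cE-m_0 K_{\Gamma/Y}=\pi^{\star}(aA-m_0 K_X)+E_1+\sum_j (c-m_0 a_j)E_j+\varphi^{\star}(L_Y+m_0 N_Y),
\]
so that every non-pullback summand is an effective $\mathbb{Q}$-divisor (after choosing an effective representative of the ample class $aA-m_0 K_X$).

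The core of the argument is to equip $M$ with a singular metric $h_M$ whose curvature is bounded below by a pull-back from $Y$. Since $-K_X$ is nef and $A$ is ample, the class $aA-m_0 K_X$ is ample on $X$, so $\pi^{\star}(aA-m_0 K_X)$ admits a smooth semipositive metric; this is the crucial use of the nefness of $-K_X$. Combined with the singular metrics of non-negative curvature on the effective divisors $E_1$ and $\sum_j(c-m_0 a_j)E_j$ provided by their defining sections, together with the pull-back to $\Gamma$ of any smooth metric on $L_Y+m_0 N_Y$, this yields
\[
i\Theta_{h_M}(M)\geq \varphi^{\star}\theta_{m_0},
\]
where $\theta_{m_0}$ is a smooth closed $(1,1)$-form on $Y$ in the class $c_1(L_Y+m_0 N_Y)$.

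The hypothesis of Proposition~\ref{lowercurcontrol} concerning the triviality of $\mathcal{I}(h_M^{1/m_0}|_{\Gamma_y})$ on a generic fibre is automatic in our setting: by the construction of $\Gamma$ in Setup~\ref{setup}, as the resolution of the normalisation of the pull-back of the universal family over a component of $\chow{X}$ parametrising MRC-fibres, combined with the almost holomorphicity of the MRC fibration, the morphism $\pi:\Gamma\to X$ restricts to an isomorphism in a neighbourhood of the general fibre $\Gamma_y$. Hence neither $E_1$ (which satisfies $\varphi_{\star}E_1\subsetneq Y$) nor the $\pi$-exceptional divisors $E_j$ meet $\Gamma_y$ for a generic $y$, so the singular part of $h_M|_{\Gamma_y}$ is trivial. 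Proposition~\ref{lowercurcontrol} then furnishes a singular hermitian metric on $\varphi_{\star}(\mathcal{O}_\Gamma(L_1+cE))\otimes\mathcal{O}_{Y_1}$ with curvature $\succcurlyeq \theta_{m_0}\otimes \Id$.

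The remaining step is to upgrade this single metric bound to weakly positive curvature and derive pseudoeffectivity of $\det \varphi_{\star}(L_1+cE)$. For this I rerun the construction in a parameter-dependent way: for every $\epsilon\in\mathbb{Q}_{>0}$ and an ample line bundle $A_Y$ on $Y$, replacing $L_Y$ by $L_Y+\epsilon A_Y$ preserves the pseudoeffectivity of $L_1$, and untwisting the resulting direct image by $A_Y^{\otimes\epsilon}$ via the projection formula produces a family of metrics on $\varphi_{\star}(L_1+cE)$ whose negative part becomes arbitrarily small. The pseudoeffectivity of $\det\varphi_{\star}(L_1+cE)$ then follows by taking determinants and passing to the limit. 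The main obstacle I anticipate is arranging that the cohomology class $c_1(L_Y+m_0 N_Y)$ of the lower bound can be absorbed into the $\epsilon$-perturbation; this should use both the pseudoeffectivity of $L_1$ itself and the effectivity of $N_Y\sim_{\mathbb{Q}}K_Y$ guaranteed by $\kappa(Y)=0$, via a choice of $m_0$ large enough relative to $L_Y$.
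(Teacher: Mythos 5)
Your overall plan --- present $L_1+cE$ as $m_0K_{\Gamma/Y}+M$, equip $M$ with a singular metric whose curvature is bounded below by $\varphi^\star(\text{smooth form})$, check triviality of the multiplier ideal on a general fibre using that $\pi$ is an isomorphism there and $\varphi_\star(E_1)\subsetneq Y$, and then invoke Proposition~\ref{lowercurcontrol} --- is the right shape of argument, and your computation of $M$ and the multiplier ideal observation are both correct. The genuine gap is exactly the one you anticipate in your last sentence, and the proposed fix does not close it. The single application of Proposition~\ref{lowercurcontrol} gives a lower bound $\succcurlyeq\theta_{m_0}\otimes\Id$ with $\theta_{m_0}$ a \emph{fixed} smooth representative of $c_1(L_Y+m_0N_Y)$ (or $c_1(L_Y)$, if you use the singular metric on $m_0N_Y$). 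This class is not nef in general, and no choice of $m_0$ makes $\theta_{m_0}$ small. Replacing $L_Y$ by $L_Y+\epsilon A_Y$ and then untwisting by $\sO(\epsilon A_Y)$ via the projection formula is a net no-op on the curvature bound: the twist adds a term in $\epsilon\,c_1(A_Y)$ and the untwist subtracts the same term, so the lower bound remains $\theta_{m_0}$. Therefore you never get a family of metrics with lower bound $-\ep\omega_Y$ for all $\ep>0$, which is what ``weakly positively curved'' requires.

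The paper avoids this by inserting the small parameter $\ep$ \emph{inside} the decomposition, before applying Proposition~\ref{lowercurcontrol}. With $E_2$ the effective divisor supported on $E$ satisfying $-K_\Gamma+E_2=-\pi^\star K_X$, one writes, for the $m$ furnished by the pseudoeffective metric $h$ on $L_1$,
$$
-mK_{\Gamma/Y}+mE_2+L_1 \;=\; m\bigl(-K_{\Gamma/Y}+E_2+\ep L_1\bigr)+(1-m\ep)L_1.
$$
For $A_Y$ ample with $L_Y+A_Y$ ample, the first factor decomposes as the sum of the semiample class $\pi^\star(-K_X+\ep aA)+\ep\varphi^\star(L_Y+A_Y)$, the effective divisor $\ep E_1+\varphi^\star N_Y$ (this is where $\kappa(Y)=0$ and the effectivity of $N_Y\sim_\Q K_Y$ enter), and the small correction $-\ep\varphi^\star A_Y$; its curvature is thus $\ge -\ep\varphi^\star\omega_Y$. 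Crucially, $L_Y$ now appears only with coefficient $\ep$, so \emph{all} the negativity is $O(\ep)$. The complementary piece $(1-m\ep)L_1$ carries the singular metric $h$ coming from pseudoeffectivity, and the exponent $m$ is chosen once, depending only on $h$, not on $\ep$ --- this is what allows $c$ (with $cE\ge mE_2$) to be chosen independently of $\ep$, which is needed for the statement to make sense as $\ep\to 0$. To repair your proof you would need to perform this $\ep$-split of $L_1$ before pushing forward, rather than trying to absorb the fixed class $c_1(L_Y+m_0N_Y)$ afterwards.
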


\begin{proof}
By construction, we can find some effective divisor $E_2$ supported in $E$ such that $-K_{\Gamma} + E_2 = -\pi^\star K_X$. 
Let $A_Y$ be an ample line bundle on $Y$ such that $A_Y + L_Y$ is still ample.

For every $\epsilon \in\bQ^+$, 
$$-K_\Gamma +E_2 +\ep a L + \ep \varphi^\star ( L_Y +A_Y) =\pi^\star (-K_X + \ep a A) + \ep \varphi^\star ( L_Y +A_Y)$$ 
is semi-ample. 
Note that $\ep E_1 + \varphi^\star K_Y \sim_\bQ D$ for some $\mathbb{Q}$-effective divisor $D$ satisfying 
$\varphi_\star ( D) \subsetneq Y$. Then the $\mathbb{Q}$-line bundle
$$-K_{\Gamma /Y} + E_2 + \epsilon L_1 = (-K_\Gamma +E_2 +\ep a L + \ep \varphi^\star ( L_Y +A_Y))  + (\ep E_1 +  \varphi^\star K_Y) -\ep \varphi^\star A_Y.$$
can be equipped with a metric $h_{1,\ep}$ such that 
$$i\Theta_{h_{1,\ep}} (-K_{\Gamma /Y} + E_2 + \epsilon L_1) \geq -\ep\varphi^\star\omega_Y$$
and $h_{1,\ep} |_{\Gamma_y}$ is smooth for a generic fiber $\Gamma_y$, where $\omega_Y$ is a K\"{a}hler metric representing $A_Y$.

Since $L_1$ is pseudo-effective, there exists a possibly singular metric $h$ on $L_1$ such that $i\Theta_{h} (L_1) \geq 0$. Then we can find a $m$ (depends only on $h$)
such that $\mathcal{I} (h ^{\frac{1}{m}} |_{\Gamma_y}) = \mathcal{O}_{\Gamma_y}$ for a generic fiber $\Gamma_y$.
Therefore, for every $0< \ep < \frac{1}{m}$, the metric $h_\epsilon := m h_{1,\ep}  + (1-m \ep) h$ defines a metric on  
$$-m K_{\Gamma /Y}  + m E_2 + L_1 = m (-K_{\Gamma /Y} + E_2 + \epsilon L_1 ) +(1-m\epsilon) L_1$$
with $i\Theta_{h_\ep} (-m K_{\Gamma /Y}  + m E_2 + L_1)\geq -m\epsilon \varphi^\star\omega_Y$ on $\Gamma$ and 
$\mathcal{I} (h_\ep ^{\frac{1}{m}} |_{\Gamma_y}) = \mathcal{O}_{\Gamma_y}$.

By applying Proposition \ref{lowercurcontrol} to the line bundle $(-m K_{\Gamma /Y}  + m E_2 + L_1, h_\ep)$,  we can find a metric $\widetilde{h}_\epsilon$ on 
$$
\varphi_\star (\sO_\Gamma(m E_2 +  L_1)) = \varphi_\star (\sO_\Gamma(m K_{\Gamma/Y} + (-m K_{\Gamma /Y}  + m E_2 + L_1)))
$$
such that 
\begin{equation}\label{semipocur}
i\Theta_{\widetilde{h}_\epsilon} (\varphi_\star (\sO_\Gamma(m E_2 +  L_1))) \succeq -m\epsilon \omega_Y  \otimes \Id_{\End (\varphi_\star (\sO_\Gamma(m E_2 +  L_1)))}\qquad\text{on } Y_{1,m} ,
\end{equation}
where $Y_{1,m} $ is the locally free locus of $\varphi_\star (\sO_\Gamma(m E_2 +  L_1))$.
\medskip

As $m$ depends only on $h$, we can find a constant $c$ independent of $\ep$ such that $c E \geq m E_2$. Then \eqref{semipocur} implies the existence of possible singular metrics $\widetilde{h}_{c,\epsilon}$ such that $i\Theta_{\widetilde{h}_{c,\epsilon}} (\varphi_\star (\sO_\Gamma(c E +  L_1))) \succeq -m\epsilon \omega_Y \otimes \Id_{\End (\varphi_\star (\sO_\Gamma(c E +  L_1)))}$ on $Y_1$. 
The first part of the lemma is thus proved.

The second part of lemma comes from the similar argument in \cite[Cor 2.26, first part]{Pau16}.
\end{proof}

Before proving the main proposition in this subsection, we need two more lemmas.
The first lemma is to compare $\varphi_\star \sO_\Gamma(L+ m E)$ with $\varphi_\star \sO_\Gamma(L + (m+1) E)$ over $Y_0$:
\begin{lemma}\label{lemmastabilise}
In the situation of Setup \ref{setup}, there exists a $m_0 \in \N$ such that for all $m \geq m_0$
the natural inclusions
\begin{equation}\label{incl}
\det \varphi_\star (\sO_\Gamma(L+ m E)) \rightarrow \det\varphi_\star (\sO_\Gamma(L + (m+1) E)) 
\end{equation}
induce an isomorphism
\begin{equation}\label{isoo}
(\det\varphi_\star (\sO_\Gamma(L+ m E))) \otimes \sO_{Y_0} 
\simeq (\det\varphi_\star (\sO_\Gamma(L + (m+1) E))) \otimes \sO_{Y_0}.
\end{equation}
\end{lemma}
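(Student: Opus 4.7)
The plan is to control the growth of the determinants $\det \varphi_\star \sO_\Gamma(L+mE)$ by combining Lemma \ref{psflemma} with the defining property of $Y_0$ in Setup \ref{setup}. Write $\sF_m := \varphi_\star \sO_\Gamma(L+mE)$. First I would check that the generic rank of $\sF_m$ is independent of $m$ and equals $r$. For very general $y \in Y$ the fibre $\Gamma_y$ is smooth and $\pi|_{\Gamma_y}$ maps it birationally onto the smooth MRC fibre $F_y \subset X$; since $E$ is $\pi$-exceptional, $\pi(E)$ has codimension at least two in $X$, and hence $\pi(E)\cap F_y$ has codimension at least two in $F_y$ for generic $y$, which makes $E|_{\Gamma_y}$ exceptional for $\pi|_{\Gamma_y}$. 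Combining the projection formula with $(\pi|_{\Gamma_y})_\star \sO_{\Gamma_y}(mE|_{\Gamma_y}) = \sO_{F_y}$ then yields $h^0(\Gamma_y,(L+mE)|_{\Gamma_y}) = h^0(F_y, A|_{F_y}) = r$ for all $m \geq 0$. The natural inclusions $\sF_m \hookrightarrow \sF_{m+1}$ are therefore inclusions of torsion-free sheaves of the same rank $r$; since $Y$ is smooth, their reflexive determinants are line bundles, and the induced map $\det \sF_m \hookrightarrow \det \sF_{m+1}$ corresponds to an effective Cartier divisor $D_m$ with $c_1(\sF_{m+1}) = c_1(\sF_m) + D_m$.

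Next I would bound the telescoping sum $\sum_i D_i$ by intersecting with a general curve on $X$. Fix a very ample divisor $H$ on $X$ and let $C$ be a complete intersection curve of general members of $|H|$. Each $D_i$ is an effective integral divisor on $Y$ and $\pi$ is birational, so $\pi_\star \varphi^\star D_i$ is an effective integral divisor on $X$ and $\pi_\star \varphi^\star D_i \cdot C$ is a non-negative integer. Pairing the pseudo-effective class supplied by Lemma \ref{psflemma} against $C$ gives
\[
\sum_{i=0}^{m-1} \pi_\star \varphi^\star D_i \cdot C \;\leq\; r\,(A \cdot C) \;-\; \pi_\star \varphi^\star c_1(\sF_0) \cdot C
\]
for every $m$, a bound independent of $m$. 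Consequently only finitely many of the non-negative integers $\pi_\star \varphi^\star D_i \cdot C$ can be positive, so $\pi_\star \varphi^\star D_i \cdot C = 0$ for $i \gg 0$. Since $\pi_\star \varphi^\star D_i$ is effective and $C$ is an ample curve, this forces $\pi_\star \varphi^\star D_i = 0$; equivalently, $\varphi^\star D_i$ is contained in the $\pi$-exceptional locus $E$ for all $i \gg 0$.

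Finally I would invoke the defining property of $Y_0$: for every prime divisor $B \subset Y_0$ the pull-back $\varphi^\star B$ is not contained in $E$. If, for some $i \gg 0$, a prime component of $D_i$ were contained in $Y_0$, its $\varphi$-pull-back would be a component of $\varphi^\star D_i \subset E$, contradicting this property. Therefore $D_i$ has no component in $Y_0$ for all $i \geq m_0$, which is exactly the required isomorphism $(\det \sF_m) \otimes \sO_{Y_0} \simeq (\det \sF_{m+1}) \otimes \sO_{Y_0}$. The main substantive step is the pseudo-effective bound from Lemma \ref{psflemma}; the rank stabilisation and the final geometric argument on $Y_0$ are then essentially bookkeeping.
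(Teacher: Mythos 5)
Your proof is correct and follows essentially the same strategy as the paper: bound the monotone growth of $\det\varphi_\star\sO_\Gamma(L+mE)$ via the pseudo-effectivity furnished by Lemma~\ref{psflemma}, then use the defining property of $Y_0$ to rule out components of the increment divisors over $Y_0$. The only organizational difference is that you sum the increments directly and intersect with a complete-intersection curve, whereas the paper argues by contradiction with a pigeonhole on the finitely many prime divisors supported in $\varphi(E)$; the underlying estimate is identical.
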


\begin{proof}
Note that, the effective divisor 
$$
c_1(\varphi_\star (\sO_\Gamma(L + (m+1) E))) - c_1(\varphi_\star (\sO_\Gamma(L+ m E)))
$$
is supported in $\varphi_\star (E)$,  which is a fixed strict subvariety of $Y$ independent of $m$.

If  we assume by contradiction that \eqref{isoo} is not an isomorphism on $Y_0$ for infinitely many $m$, we can find an effective divisor $\widetilde{D}$ in $Y$
such that $\widetilde{D}$ is not contained in $Y\setminus Y_0$ and a sequence $a_m \rightarrow +\infty$ such that 
$$
c_1(\varphi_ \star (\sO_\Gamma(L+ m E))) -
c_1(\varphi_ \star (\sO_\Gamma(L))) \geq a_m \cdot [\widetilde{D}] .
$$
Combining this with Lemma \ref{psflemma}, we know that 
$$
A- \frac{1}{r}\pi_\star \varphi^\star c_1(\varphi_ \star (\sO_\Gamma(L))) -  
\frac{a_m}{r}\pi_\star \varphi^\star [\widetilde{D}] 
$$
is a pseudoeffective class on $X$.
By the definition of $Y_0$ and $\widetilde{D}$, the class 
$\pi_\star \varphi^\star [\widetilde{D}]$ is a non trivial effective class on $X$. 
We get thus a contradiction as $a_m \rightarrow +\infty$.
\end{proof}

The following lemma tells us that, for every $c >0$, 
$L+ c E - \frac{1}{r}\varphi^\star c_1(\varphi_ \star (\sO_\Gamma(L+ m E)))$ has no positivity in the horizontal direction. 
More precisely, we have
\begin{lemma}\label{psflemm}
In the situation of Setup \ref{setup}, let $m_0 \in \mathbb{N}$ as in Lemma \ref{lemmastabilise}, and let $m \geq m_0$ be an integer. 

Let $\theta$ be a closed  $(1,1)$-current on $Y$ such that $\theta \cdot \mathcal{C} >0$ 
for some movable compact curve $\mathcal{C}$ contained in $Y_0$.
Then the class
$$
L+ c E - \frac{1}{r}\varphi^\star c_1(\varphi_ \star (\sO_\Gamma(L+ m E))) - \varphi^\star \theta
$$ 
is not pseudoeffective for any constant $c >0$. 
\end{lemma}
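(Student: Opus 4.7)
I argue by contradiction: suppose that for some $c>0$ the class
$$\alpha := L + cE - N - \varphi^\star\theta, \qquad N := \frac{1}{r}\varphi^\star c_1(\varphi_\star \sO_\Gamma(L+mE))$$
is pseudoeffective on $\Gamma$. Since $E$ is effective, enlarging $c$ preserves pseudoeffectivity, so I may assume $c \geq m_0$ as in Lemma \ref{lemmastabilise}. Write $N':=\frac{1}{r}c_1(\varphi_\star \sO_\Gamma(L+mE))$, a $\Q$-divisor class on $Y$. As $Y$ is projective, $N^1(Y)_\Q$ is dense in $N^1(Y)_\R$; picking a rational ample class $H$ on $Y$, a small $\epsilon>0$, and a rational approximation $\theta''$ of $\theta$ with $\|\theta-\theta''\|$ sufficiently small, the $\Q$-divisor $\theta' := \theta''-\epsilon H$ has the properties that $\theta-\theta' = (\theta-\theta'')+\epsilon H$ is pseudoeffective on $Y$ (because $\epsilon H$ lies in the interior of the psef cone) and $\theta'\cdot\mathcal{C}>0$ (by continuity of the intersection pairing). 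Since $L + cE - N - \varphi^\star\theta' = \alpha + \varphi^\star(\theta-\theta')$ is still pseudoeffective, I replace $\theta$ by $\theta'$ and assume $\theta$ is a $\Q$-line bundle.

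\textbf{Applying Lemma \ref{lmpost} and computing the determinant.} Apply Lemma \ref{lmpost} with $a=1$, $E_1 = cE$ (which satisfies $\varphi_\star E_1 \subsetneq Y$ by Proposition \ref{propositionfundamentals}, part 2) and $L_Y := -N'-\theta$, a $\Q$-line bundle. The assumption that $L_1 := L+cE+\varphi^\star L_Y = \alpha$ is pseudoeffective is precisely the contradiction hypothesis. The lemma yields an integer $c'\geq 0$, which I enlarge to ensure $c+c'\geq m$, such that $\det\varphi_\star \sO_\Gamma(L_1+c'E)$ is pseudoeffective on $Y$. Since general fibres of $\varphi$ avoid the $\pi$-exceptional locus $E$ (Proposition \ref{propositionfundamentals}, part 2), the sheaf $\varphi_\star \sO_\Gamma(L+(c+c')E)$ has generic rank $r$, and the projection formula yields
$$\det \varphi_\star \sO_\Gamma(L_1+c'E) \equiv c_1\bigl(\varphi_\star \sO_\Gamma(L+(c+c')E)\bigr) - rN' - r\theta.$$
Iterating the inclusions of Lemma \ref{lemmastabilise} from $m$ up to $c+c'$, the cokernels combine to give an effective divisor $D$ supported on $Y\setminus Y_0 \subset P$ satisfying $c_1(\varphi_\star \sO_\Gamma(L+(c+c')E)) - rN' = D$. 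Therefore $D - r\theta$ is a pseudoeffective class on $Y$.

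\textbf{Contradiction.} Since $\mathcal{C}\subset Y_0$ is a movable compact curve, the BDPP duality gives $(D-r\theta)\cdot\mathcal{C} \geq 0$. On the other hand $D$ is supported on $P \subset Y \setminus Y_0$ while $\mathcal{C}\subset Y_0$, so $D\cdot\mathcal{C}=0$, forcing $-r\theta\cdot\mathcal{C}\geq 0$. This contradicts the hypothesis $\theta\cdot\mathcal{C}>0$. The most delicate point is the initial reduction to rational $\theta$, required in order to apply Lemma \ref{lmpost} literally; everything else amounts to combining Lemma \ref{lmpost}, the projection formula, and the stabilisation statement of Lemma \ref{lemmastabilise}.
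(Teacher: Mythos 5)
Your proof follows essentially the same route as the paper's: contradict pseudoeffectivity, apply Lemma \ref{lmpost} to obtain pseudoeffectivity of $\det\varphi_\star \sO_\Gamma(L + (c+c')E) - c_1(\varphi_\star\sO_\Gamma(L+mE)) - r\theta$, invoke Lemma \ref{lemmastabilise} to support the determinant difference away from $Y_0$, and intersect with $\mathcal{C}$. Your preliminary reduction of $\theta$ to a rational class (to match the literal hypotheses of Lemma \ref{lmpost}, which the paper applies with a real current $\theta$ without comment) is a genuine, and sensible, clarification; the only slip is the typo ``$Y\setminus Y_0\subset P$'', which should read that the support of $D$, being a divisor inside $Y\setminus Y_0$, lies in $P$, but this has no bearing on the conclusion $D\cdot\mathcal{C}=0$.
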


\begin{remark*}
Note that we {\em do not} assume that $\theta$ is a positive current.
\end{remark*}

\begin{proof}
We assume by contradiction that there exists some $c \geq m$ such that 
$$
T:= L+ c E - \frac{1}{r}\varphi^\star c_1(\varphi_ \star (\sO_\Gamma(L+ m E))) - \varphi^\star \theta
$$ 
is pseudoeffective on $\Gamma$. We can thus apply  Lemma \ref{lmpost} to $T$ and  find a $c_1 >0$ such that 
$$
c_1(\varphi_\star (\sO_\Gamma(T + c_1 E))) \qquad\text{ is pseudoeffective on } Y.
$$
Combining this with the definition of $T$, we have
\begin{equation}\label{lemlater}
\det \varphi_\star (\sO_\Gamma(L+ c E + c_1E)) \geq \det \varphi_ \star (\sO_\Gamma(L+ m E)) + r \theta   \qquad\text{on } Y.
\end{equation}

Thanks to Lemma  \ref{lemmastabilise},   the effective class 
$$c_1(\varphi_\star (\sO_\Gamma(L+ c E + c_1 E))) - 
c_1(\varphi_ \star (\sO_\Gamma(L+ m E)))$$ 
is supported in $Y\setminus Y_0$. As $\mathcal{C}$ 
is contained in $Y_0$,
we have 
$$
c_1(\varphi_\star (\sO_\Gamma(L+ c E + c_1 E)))  \cdot \mathcal{C} = 
c_1(\varphi_\star (\sO_\Gamma(L+ m E)))  \cdot \mathcal{C} .
$$
We get thus a contradiction to \eqref{lemlater} since $\theta \cdot \mathcal{C} >0$ and $\mathcal{C}$ is movable by assumption.
\end{proof}

By combining Lemma  \ref{psflemma}, \ref{lmpost} and \ref{psflemm}, we can prove the following key proposition.

\begin{proposition}\label{flatness}
In the situation of Setup \ref{setup}, let $m_0 \in \mathbb{N}$ as in Lemma \ref{lemmastabilise}, and let $m \geq m_0$ be an integer. 
Fix a K\"ahler metric $\omega_X$ on $X$ and let $\beta$ be a smooth form representing 
$\frac{1}{r} \det \varphi_\star (\sO_\Gamma( L + m E))$ on $Y$.

{\em (i):} For every $p\in\bN$, let $Y_{1,p}$ be the locally free locus of $\varphi_\star (\sO_\Gamma(p L + p m E))$.
Then for every $\ep >0$, we can find a metric $h_{\ep, p}$
on $ \varphi_\star (\sO_\Gamma(p L + p m E)) $ over $Y_0 \cap Y_{1,p}$ 
such that
$$
i\Theta_{h_{\ep , p}} (\varphi_\star (\sO_\Gamma( p L + p m E)) ) 
\succcurlyeq (-\ep \omega_X +p \beta)\otimes \Id_{\End (\varphi_\star (\sO_\Gamma(p L + p m E)) )} \text{ on } Y_0 \cap Y_{1,p}.
$$

{\em (ii):} For every $p\in\mathbb{N}$ divisible by $r$, we have
$$
\frac{1}{r_p}\pi_\star \varphi^\star (c_1 (\varphi_\star \sO_\Gamma(p L + pm E))) =
\frac{p}{r} \pi_\star \varphi^\star ( c_1 (\varphi_\star \sO_\Gamma( L + m E))) 
$$
in $H^{1,1} (X, \mathbb{R})$, where $r_p$ is the rank of $\varphi_\star (\sO_\Gamma(p L + pm E))$.
\end{proposition}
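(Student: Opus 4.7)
\textbf{Plan for Proposition \ref{flatness}.} The strategy is to establish (i) by applying Lemma \ref{lmpost} to the line bundle $pL+pmE-p\varphi^\star\beta$, and to deduce (ii) by squeezing the class $\theta:=\tfrac{1}{pr_p}c_1(V_p)-\tfrac{1}{r}c_1(V)$ between the positivity produced by (i) and the obstruction provided by Lemma \ref{psflemm}, where $V:=\varphi_\star\sO_\Gamma(L+mE)$ and $V_p:=\varphi_\star\sO_\Gamma(pL+pmE)$.

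\textbf{Part (i).} Set $L_1:=pL+pmE-p\varphi^\star\beta$. By Lemma \ref{psflemma}, $A-\pi_\star\varphi^\star\beta$ is pseudoeffective on $X$; pulling back via $\pi$ and writing $\pi^\star\pi_\star\varphi^\star\beta-\varphi^\star\beta$ as a difference of effective $\pi$-exceptional divisors, one obtains an effective divisor $E_0\subset E$ with $L-\varphi^\star\beta+E_0$ pseudoeffective on $\Gamma$. For $m$ large enough (so that $mE\geq E_0$), $L_1$ is then pseudoeffective, and since $\supp(pmE)\subset E$ does not surject onto $Y$ by Proposition \ref{propositionfundamentals} 2), $L_1$ satisfies the hypotheses of Lemma \ref{lmpost}. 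That lemma (via Proposition \ref{lowercurcontrol}) equips $\varphi_\star\sO_\Gamma(L_1)$ with a singular Hermitian metric whose curvature is $\succcurlyeq -\ep\omega_Y\otimes\Id$ on its locally free locus for every $\ep>0$, and the projection formula $\varphi_\star\sO_\Gamma(L_1)=V_p\otimes\sO_Y(-p\beta)$ together with untwisting yields the metric asserted in (i).

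\textbf{Part (ii).} Taking determinants in the metric of (i), extending from $Y_0\cap Y_{1,p}$ to $Y_0$ via Proposition \ref{speccase}(ii) (applicable because $V_p|_{Y_0}$ is reflexive by Proposition \ref{facts} 2)), and pairing with any movable curve $\mathcal{C}\subset Y_0$ gives $\theta\cdot\mathcal{C}\geq 0$ after letting $\ep\to 0$. On the other hand, running the proof of Lemma \ref{psflemma} with $(L,r,m)$ replaced by $(pL,r_p,pm)$ shows that $pA-\tfrac{1}{r_p}\pi_\star\varphi^\star c_1(V_p)$ is pseudoeffective on $X$; pulling back to $\Gamma$ and absorbing exceptional corrections into an effective multiple $cE$ yields that
$$
L+cE-\tfrac{1}{pr_p}\varphi^\star c_1(V_p) \;=\; L+cE-\tfrac{1}{r}\varphi^\star c_1(V)-\varphi^\star\theta
$$
is pseudoeffective on $\Gamma$ for $c\gg 0$. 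Lemma \ref{psflemm} then forces $\theta\cdot\mathcal{C}\leq 0$ for every movable $\mathcal{C}\subset Y_0$, and combined with the previous inequality we obtain $\theta\cdot\mathcal{C}=0$ for every such $\mathcal{C}$.

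\textbf{Conclusion of (ii) and main obstacle.} Choose ample classes $H_1,\dots,H_{n-1}$ on $X$, and let $C\subset X$ be a general complete intersection of one member of each $|mH_i|$ for $m\gg 0$. By Proposition \ref{propositionfundamentals} 2), $C$ avoids both $\pi(E)$ and $X\setminus\pi(\varphi^{-1}(Y_0))$; hence $\pi^{-1}(C)\cong C$ lies in $\varphi^{-1}(Y_0)$ and $\mathcal{C}:=\varphi_\star\pi^{-1}(C)$ is a movable curve in $Y_0$. The projection formula gives $\pi_\star\varphi^\star\theta\cdot(H_1\cdots H_{n-1})=\theta\cdot\mathcal{C}=0$; as products $H_1\cdots H_{n-1}$ of ample classes span $N_1(X)_\R$ by Hard Lefschetz, we conclude $\pi_\star\varphi^\star\theta=0$ in $N^1(X)_\R\subset H^{1,1}(X,\R)$, and multiplying by $p$ yields (ii). The main technical obstacle is keeping precise control of the $\pi$-exceptional corrections when transferring pseudoeffectivity between $X$ and $\Gamma$---verifying that the auxiliary effective divisors introduced can be absorbed into $pmE$ or $cE$---together with the extension of the metric in (i) from $Y_0\cap Y_{1,p}$ to $Y_0$.
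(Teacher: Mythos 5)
Your part (ii) is essentially the argument of the paper: you reduce to a single movable curve $\mathcal{C}\subset Y_0$ via complete intersections, get $\theta\cdot\mathcal{C}\geq 0$ from the positivity of part (i), and get $\theta\cdot\mathcal{C}\leq 0$ by rerunning Lemma \ref{psflemma} with $pL$ and invoking Lemma \ref{psflemm}. The paper realises the first inequality through the generically surjective map $\tau^\star\sE_p\to\tau^\star\sV_p$ and numerical flatness of $\tau^\star\sE_p$ on a very general curve, whereas you take the determinant of the metric; both routes work, though one should say a word about the Lelong numbers at the finitely many points where $\mathcal{C}$ meets the polar set of the metric (the current represents the class, and its curve-integral still bounds $c_1\cdot\mathcal{C}$ from below). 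That is a cosmetic difference.

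Your part (i), however, has a genuine gap. You set $L_1=pL+pmE-p\varphi^\star\beta$ and claim Lemma \ref{lmpost} "equips $\varphi_\star\sO_\Gamma(L_1)$ with a singular Hermitian metric". Lemma \ref{lmpost} does not do that: it only produces a suitable metric on $\varphi_\star\sO_\Gamma(L_1+cE)$ for $c$ large enough depending on the chosen pseudoeffective metric on $L_1$. By the projection formula this controls $\varphi_\star\sO_\Gamma(pL+pmE+cE)\otimes\sO_Y(-p\beta)$, not $\sV_p\otimes\sO_Y(-p\beta)$. To conclude you would have to identify $\varphi_\star\sO_\Gamma(pL+pmE+cE)$ with $\varphi_\star\sO_\Gamma(pL+pmE)$ over $Y_0$, i.e. you need a version of Lemma \ref{lemmastabilise} applied to the ample bundle $pA$ in place of $A$; you never state or prove this. (The paper also cannot apply Lemma \ref{lmpost} directly to the $p$-th power: it first treats $p=1$, uses \eqref{isoo} to identify $\varphi_\star(L+c'E)$ with $\varphi_\star(L+mE)$ on $Y_0$, and then pushes the curvature bound to all $p$ via the generically surjective $\mathrm{Sym}^p$ map \eqref{namor} and Proposition \ref{quotientweakpositive}, which sidesteps the need for a stabilisation statement at level $p$.) A secondary issue: you enlarge $m$ to absorb $E_0$, but the proposition is stated for every $m\geq m_0$, so that reduction also needs justification via the same stabilisation. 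The missing stabilisation lemma for $pL$ is provable by the same argument as Lemma \ref{lemmastabilise} (Lemma \ref{psflemma} works verbatim with $pA$), so your route is repairable, but as written the key identification is absent.
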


\begin{proof}[Proof of {\em (i):}]
Thanks to Lemma \ref{psflemma}, we can find a constant $c >0$ such that $L+c E - \varphi^\star \beta$ is pseudoeffective on $\Gamma$.
It means that we can find a possible singular metric $h_L$ on $L +cE$ such that $i\Theta_{h_L} (L+cE) \geq \varphi^\star \beta$.
As a consequence, by applying Lemma \ref{lmpost} to $L+cE -\varphi^\star \beta$, there exists a constant $c' \geq m$
such that, for every $\epsilon >0$, we can find a possible singular hermitian metric
$h_\epsilon$ on $\varphi_\star (L +c' E)$ such that 
\begin{equation}\label{semp}
i\Theta_{h_\ep} \varphi_\star (L + c' E) \succcurlyeq (-\ep \omega_X + \beta) \otimes \Id_{\End (\varphi_\star (L + c' E) )} \qquad\text{on }Y_{1,1, c'} ,
\end{equation}
where $Y_{1,1, c'}$ is the locally free locus of $\varphi_\star (L + c' E)$.
Thanks to \eqref{isoo}, we have 
$$
\varphi_\star (\sO_\Gamma(L + c' E)) \otimes \sO_{Y_0 }
= \varphi_\star (\sO_\Gamma(L + m E )) \otimes \sO_{Y_0}.
$$
Together with \eqref{semp}, for every $\ep >0$, we can find a metric $h_{\tilde \ep}$
on $ \varphi_\star (\sO_\Gamma(L + m E)) $ such that
\begin{equation}\label{sempos}
i\Theta_{h_{\tilde \ep}} (\varphi_\star (\sO_\Gamma(L + m E)) ) \succcurlyeq (-\ep \omega_X +\beta)\otimes \Id_{\End (\varphi_\star (\sO_\Gamma(L + m E)) )} \text{ on }Y_{1,1} \cap Y_0  .
\end{equation}

\medskip

We consider the natural morphism
\begin{equation}\label{namor}
\mbox{Sym}^{p} \varphi_\star \sO_\Gamma(L + m E) \otimes \sO_{Y_0} \rightarrow  \varphi_\star \sO_\Gamma(p L + p m E)\otimes \sO_{Y_0}. 
\end{equation}
By \eqref{surgene}, it is surjective on a generic point. By applying Proposition \ref{quotientweakpositive} to \eqref{namor},
\eqref{sempos} induces thus a possible singular metric $h_{\ep, p}$ on $\varphi_\star (\sO_\Gamma(pL + pm E)) $ such that
\begin{equation}\label{sempospcase}
i\Theta_{h_\ep , p} (\varphi_\star (\sO_\Gamma( pL + p m E)) ) \succcurlyeq (-\ep \omega_X + p\beta) \otimes \Id_{\End (\varphi_\star \sO_\Gamma( pL + p m E))} \text{ on }Y_{1,p} \cap Y_0  .
\end{equation}
The part $(i)$ is thus proved.
\end{proof}

For every $p\in \bN$ divisible by $r$, set
$$\sE_p := (\mbox{Sym}^{p} \varphi_\star \sO_\Gamma(L + m E))\otimes (\det \varphi_\star \sO_\Gamma(L + m E))^{\otimes \frac{-p}{r} } $$
and
\begin{equation}\label{direc}
\sV_p := \varphi_\star \sO_\Gamma(p L + pm E)) \otimes (\det \varphi_\star \sO_\Gamma(L + m E))^{\otimes \frac{-p}{r} }  .
\end{equation}

Before proving the second statement, we observe the following: 
let $\tau: \tilde C \rightarrow Y_0 \cap Y_{1,1}$ be a morphism from a smooth compact curve 
to $Y_0$ that is very general (i.e. the image is not contained in the locus
where the metrics constructed above are singular). Then, for every $p\in \bN$ divisible by $r$,
$\tau^\star \sE_p$ is numerically flat. Indeed its first Chern class is zero by construction
and $\tau^\star \sE_p$ is nef by $(i)$.

\begin{proof}[Proof of {\em (ii):}]
Let $\alpha$ be a smooth form representing  
$
\frac{1}{r_p}
c_1 (\varphi_\star (\sO_\Gamma(p L + pm E))).
$
Let $C \subset X$ be a very general complete intersection curve cut out by sections
of a very ample line bundle $A_X$ on $X$. Since $C \cap Z = \emptyset$, we identify  with its strict transform in $\Gamma$.
Since $\varphi_\star \sO_\Gamma(L + m E)$ is reflexive on $Y_0$, we know that $Y_0 \setminus Y_{1,1}$ 
has codimension at least two in $Y_0$. Thus it follows by Proposition \ref{propositionfundamentals} that $\pi(\fibre{\varphi}{Y \setminus Y_{1,1}})$ 
has codimension at least two. Thus we can assume without loss of generality that
the movable curve $\varphi(C)$ is contained in $Y_{1,1} \cap Y_0$.
Let $\tau: \widetilde{C} \rightarrow \varphi(C)$ be the normalisation.

Since $N_1(X)$ is generated by cohomology classes of the form $A_X^{\dim X-1}$
with $A_X$ a very ample line bundle on $X$, it is sufficient, by duality and the projection formula, to prove that 
\begin{equation}\label{eqli}
\int_{\widetilde{C}} \tau^\star (\alpha-p \beta)=0 . 
\end{equation}
The map \eqref{namor} induces a generically surjective morphism
$$
\tau^\star \sE_p \rightarrow \tau^\star  \sV_p\qquad\text{on } \widetilde{C}. 
$$
We observed above that $\tau^\star  \sE_p$ is numerically flat on $\widetilde{C}$, hence also $\tau^\star  (\sV_p)$ \cite[Ex.6.4.17]{Laz04b}.
Yet by definition
$c_1(\sV_p)$ is represented by $r_p (\alpha - p \beta)$. 
Thus we get
\begin{equation}\label{sepos}
\int_{\widetilde{C}}\tau^\star (\alpha - p \beta) \geq 0 . 
\end{equation}
In the other direction, as $p L$ is also the pull-back of some very ample bundle on $X$,
we can replace $L$ by $pL$ in Lemma \ref{psflemma}. Therefore
$p L + c_3 E - \varphi^\star  \alpha$ 
is pseudoeffective on $\Gamma$ for some $c_3$ large enough.
Hence
$$
L + \frac{c_3}{p} E - \frac{\varphi^\star  \alpha}{p} = L + \frac{c_3}{p} E - \varphi^\star  \beta - \varphi^\star  (\frac{\alpha-p \beta}{p})
$$ 
is pseudoeffective. By applying Lemma \ref{psflemm}, we get
$$
\int_{\widetilde{C}}\tau^\star (\alpha - p \beta) \leq 0. 
$$
Together with \eqref{sepos} this proves \eqref{eqli}.
\end{proof}

Now we can prove the main result in this subsection.

\begin{proposition} \label{propositiontrivial}
In the situation of Setup \ref{setup}, let $m_0 \in \mathbb{N}$ as in Lemma \ref{lemmastabilise}, and let $m \geq m_0$ be an integer. 
Fix a sufficiently divisible $p\in \mathbb{N}$ and let $\sV_p$ be the direct image defined in \eqref{direc}.
Then $\sV_p$ is a trivial vector bundle on $Y_0$. 
\end{proposition}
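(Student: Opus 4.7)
The plan is to combine the two properties supplied by Proposition \ref{flatness}: the weak positivity from $(i)$ and the Chern class vanishing from $(ii)$. Set $M := \det \varphi_\star \sO_\Gamma(L+mE)$, so that the smooth form $p\beta$ represents $c_1(M^{\otimes p/r})$ and
$$
\sV_p = \varphi_\star \sO_\Gamma(pL+pmE) \otimes M^{\otimes -p/r}.
$$

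\textbf{Step 1 (Weak positive curvature of $\sV_p$ on $Y_0$).} Tensor the metric $h_{\epsilon,p}$ provided by Proposition \ref{flatness}$(i)$ with a smooth hermitian metric on $M^{\otimes -p/r}$ of curvature $-p\beta$. The $p\beta$ contribution cancels and we obtain on $\sV_p|_{Y_0 \cap Y_{1,p}}$ a metric with curvature $\succcurlyeq -\epsilon \omega_X \otimes \Id$. Because $\varphi_\star \sO_\Gamma(pL+pmE)$ is reflexive on $Y$ (Proposition \ref{facts}$(2)$), the complement $Y_0 \setminus Y_{1,p}$ has codimension at least two in $Y_0$, so Proposition \ref{speccase}$(ii)$ extends the metrics across this locus. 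Thus $\sV_p|_{Y_0}$ is weakly positively curved.

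\textbf{Step 2 (Triviality of $\det \sV_p$ on $Y_0$).} By Proposition \ref{flatness}$(ii)$, $\pi_\star \varphi^\star c_1(\sV_p) = 0$ in $H^{1,1}(X)$, so $\varphi^\star c_1(\sV_p)$ is supported on the $\pi$-exceptional locus $E$. Write $c_1(\sV_p) = \sum a_i [D_i]$ with $D_i$ prime divisors on $Y$. For any $D_i \subset Y_0$, the very definition of $Y_0$ guarantees that $\varphi^\star D_i$ has a prime component not in $E$, and distinct prime divisors $D_i \neq D_j$ in $Y$ contribute disjoint prime components to $\varphi^\star D_i$ and $\varphi^\star D_j$. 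No cancellation can occur, so $a_i = 0$ whenever $D_i \subset Y_0$. Hence $c_1(\sV_p)|_{Y_0} = 0$ and $\det \sV_p|_{Y_0} \simeq \sO_{Y_0}$.

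\textbf{Step 3 (Numerical flatness via Corollary \ref{trivialexten}).} Cover $Y_0$ generically by smooth projective surfaces $\iota: S \to Y$ constructed from two-parameter families of general complete intersection curves chosen to lie in $Y_0$; since $Y \setminus Y_0$ is divisorial but such curves may be taken to avoid it, and the non-locally-free locus of $\sV_p$ has codimension at least two in $Y_0$, the image $\iota(S)$ meets the bad locus in a finite set $Z_S$. On $S \setminus Z_S$, $\iota^\star \sV_p$ is weakly positively curved (Step 1) with $c_1 = 0$ (Step 2). Corollary \ref{trivialexten} then shows the reflexive extension of $\iota^\star \sV_p$ is numerically flat on $S$.

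\textbf{Step 4 (Triviality on $Y_0$).} By Demailly--Peternell--Schneider, a numerically flat bundle on a compact K\"ahler manifold admits a filtration by subbundles whose graded quotients are hermitian flat, corresponding to unitary representations of the fundamental group. Since $Y_0$ is simply connected (as mentioned in the introduction; this follows from $\pi_1(X) = 1$ together with Proposition \ref{propositionfundamentals}$(2)$, which identifies $\varphi^{-1}(Y_0) \setminus E$ with a Zariski open subset of $X$ of codimension at least two complement), every hermitian flat graded piece of $\iota^\star \sV_p$ is trivial on each surface $S$. Patching these local trivializations using Remark \ref{remarkholomorphicconstant} (all holomorphic functions on $Y_0$ are constant and morphisms between trivial bundles on $Y_0$ have constant rank) together with $\det \sV_p|_{Y_0} \simeq \sO_{Y_0}$ (which forces all extension classes in the filtration to split) yields $\sV_p|_{Y_0} \simeq \sO_{Y_0}^{\oplus r_p}$.

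\textbf{Main obstacle.} The principal hurdle is Step 3: since $Y \setminus Y_0$ is divisorial in $Y$, a generic complete intersection surface of $Y$ meets $Y \setminus Y_0$ in a curve rather than a finite set, so Corollary \ref{trivialexten} does not apply to the obvious surfaces. Producing enough smooth projective surfaces that lie in $Y_0$ outside only finitely many points, and simultaneously controlling the non-locally-free locus of $\sV_p$ along them, requires exploiting the fine codimension bounds from Proposition \ref{propositionfundamentals} (together with the fact that any prime divisor not in $Y_0$ sits inside $P$). The passage from numerical flatness on such surfaces to genuine triviality on the non-compact $Y_0$ in Step 4 is the secondary difficulty, and it is here that the constancy of holomorphic functions from Remark \ref{remarkholomorphicconstant} plays an essential role.
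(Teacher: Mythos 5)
Your plan has the right ingredients (weak positivity from Proposition~\ref{flatness}(i), Chern-class vanishing from (ii), Corollary~\ref{trivialexten}, simple connectedness, constancy of holomorphic functions on $Y_0$), and you correctly identify the main obstruction in your Step~3: a generic complete-intersection surface in $Y$ meets the divisorial locus $Y \setminus Y_0$ in a \emph{curve}, so Corollary~\ref{trivialexten} cannot be applied to any obvious surface inside $Y_0$. However, you do not overcome this obstruction, and the argument stalls precisely there; the ``main obstacle'' paragraph records the difficulty but does not resolve it. The missing idea, which is the crux of the paper's proof, is to choose the auxiliary surface $S$ \emph{inside $X$}, not inside $Y_0$: take $S = H_1 \cap \cdots \cap H_{n-2}$ a general complete intersection in $X$. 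Since $X$ is simply connected, Lefschetz gives $\pi_1(S) = 1$; since $Z := \pi(E)$ has codimension $\geq 2$ in $X$, $Z \cap S$ is finite; and since $\pi(\varphi^{-1}(Y \setminus Y_0))$ also has codimension $\geq 2$ (Proposition~\ref{propositionfundamentals},2), the strict transform $\pi^{-1}(S \setminus Z_S)$ sits inside $\fibre{\varphi}{Y_0}$ up to a finite set. One then studies the \emph{pull-back} $\varphi^\star \sV_p$ on $\pi^{-1}(S \setminus Z_S)$, which is weakly positively curved with vanishing first Chern class, so its reflexive extension to $S$ is numerically flat by Corollary~\ref{trivialexten}, hence trivial because $\pi_1(S)=1$. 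Sections on this curve/surface are then extended to $X \setminus Z$ and, using the compactness of $X$ and the vanishing of $c_1(j_\star \det\varphi^\star\sV_p)$, promoted to nowhere-vanishing frames, which finally descend to sections of $\sV_p$ on $Y_0$.

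Two further points are worth flagging. First, your Step~2 argument that $c_1(\sV_p)|_{Y_0} = 0$ is more delicate than written: $\pi_\star\varphi^\star c_1(\sV_p) = 0$ is an identity in $H^{1,1}(X,\R)$, not an identity of divisors, so choosing a representative $\sum a_i [D_i]$ and arguing that its pull-back has ``no cancellation'' does not directly follow from the cohomological vanishing (one would at least need to know that $c_1(\sV_p)$ is represented by a divisor supported outside $Y_0$ up to something numerically trivial, which is not obvious when $Y$ is not assumed to have trivial group of numerically trivial line bundles). The paper sidesteps this altogether by working with $\varphi^\star\sV_p$ on $X$, where the relevant $c_1$ vanishes in $H^{1,1}(X,\R)$ on the nose. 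Second, your Step~4 is a sketch: even granting numerical flatness on a family of surfaces in $Y_0$, patching the resulting trivializations into a global trivialization of $\sV_p$ on the non-compact $Y_0$ is not automatic and is not what the paper does; the paper instead produces $r_p$ global sections $s_1, \dots, s_{r_p}$ on $\Gamma \setminus E$ with $s_1 \wedge \cdots \wedge s_{r_p}$ a non-zero constant, and then shows each $s_i$ descends to $\tau_i \in H^0(Y_0, \sV_p)$ by a Riemann-extension argument that uses Proposition~\ref{propositionfundamentals},3) to find a local section of $\varphi$ through a reduced component of each fibre over a divisor $B \subset Y_0$.
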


\begin{proof}
Denote by $Y_{1,p} \subset Y$ the locally free locus of $\sV_p$. Since $\sV_p$ is reflexive on $Y_0$ we know that
the complement of $Y_0 \cap Y_{1,p}$ in $Y_0$ has codimension at least two. By Proposition \ref{facts}, 3)
it is sufficient to show that $\sV_p \otimes \sO_{Y_0 \cap Y_{1,p}}$ is a trivial vector bundle.

The image of the $\pi$-exceptional locus $Z := \pi (E)$ has codimension at least two.
Denote by $j: X \setminus Z \rightarrow X$ the inclusion.

Let $A_X$ be a very ample line bundle on $X$, and let
$$
S = H_1 \cap \cdots \cap H_{n-2}
$$
be a surface cut out by general elements $H_1 , \cdots , H_{n-2} \in |A_X|$. 

Then $Z_S := Z \cap S$ is a finite set and $S\setminus Z_S$ is simply connected, 
since by the Lefschetz theorem $\pi_1(S) \simeq \pi_1(X)$ is trivial.
Let $\pi^{-1} (S\setminus Z_S)$ be the strict transform of $S\setminus Z_S$.
By using Proposition \ref{propositionfundamentals}, we see that
$\pi^{-1} (S\setminus Z_S) \setminus \varphi^{-1} (Y_0)$ is also finite.

The vector bundle $\varphi^\star\sV_p |_{\pi^{-1}  (S\setminus Z_S)}$ is weakly positively curved
by Proposition \ref{flatness}(i) and $c_1 (\varphi^\star\sV_p |_{\pi^{-1}  (S\setminus Z_S)})=0$ Proposition \ref{flatness}(ii).
Therefore, by Proposition \ref{trivialexten} its extension to $S$ is numerically flat.
Since $\pi_1 (S)=1$ this implies that this extension, hence  $\varphi^\star\sV_p |_{\pi^{-1}  (S\setminus Z_S)}$ is  a trivial vector bundle.

\medskip

Since $Z$ has codimension at least two, we have isomorphisms
\begin{equation}\label{isobir}
H^0 (\Gamma\setminus E,  \varphi^\star \sV_p) 
\simeq H^0(X \setminus Z, \varphi^\star \sV_p) 
\simeq
H^0(X, (j_\star (\varphi^\star \sV_p))^{\star \star }). 
\end{equation}
Consider now a curve 
$$
C = S \cap H_{n-1}
$$
where $H_{n-1} \in |A_X|$ is general. 
Up to replacing $A_X$ by some positive multiple, we can suppose that
we have a surjective morphism
$$
H^0 (\Gamma\setminus E,  \varphi^\star \sV_p) 
 \rightarrow H^0 (C, \varphi^\star  \sV_p) .
$$ 
As $Z \cap S = \emptyset$ we know that $(\varphi^\star  \sV_p)|_C$ 
is a trivial vector bundle.
Thus the surjectivity above implies the existence of section
$$
s_1, s_2 ,\cdots s_{r_p} \in H^0 (\Gamma\setminus E,  \varphi^\star \sV_p)
$$ 
such that 
$$
s_1 \wedge \cdots \wedge s_{r_p} \in H^0 (\Gamma\setminus E,  \det \varphi^\star \sV_p) \text{ is a non trivial section}.
$$
Thanks to \eqref{isobir}, it induces a section of $H^0 (X, (j_\star  (\det \varphi^\star \sV_p))^{\star\star })$.
Yet $X$ is compact and $c_1 (j_\star  (\det \varphi^\star \sV_p)) =0$ by using Proposition \ref{flatness}. 
So the nonvanishing section is a non-zero constant.
In particular $s_1 \wedge \cdots \wedge s_{r_p}$ is a non-zero constant.

\medskip

We claim that $s_i|_{\fibre{\varphi}{Y_0} \setminus E} = \varphi^\star  \tau_i$ for some 
$\tau_i \in H^0(Y_0 , \sV_p)$. If the claim is proved, since
$$
s_1 \wedge \cdots \wedge s_{r_p} = \varphi^\star  (\tau_1 \wedge \cdots \wedge \tau_{r_p})
$$
is a non-zero constant, these sections define the trivialisation of $\sV_p$. The proposition is proved.

{\em Proof of the claim.} Since $\Gamma \setminus E \rightarrow Y$ is not proper, this is not
totally obvious. However, since $\varphi$ has connected fibers, 
the sections $s_i \in H^0(\Gamma \setminus E, \varphi^\star \sV_p)$ 
induce sections $\tilde \tau_i \in H^0(Y_0 \setminus \varphi(E), \sV_p)$. 

We need to prove that $\tilde \tau_i$
extends to a section $\tau_i \in H^0(Y_0, \sV_p)$: since
$\sV_p \otimes \sO_{Y_0}$ is reflexive, it is enough to show that $\tilde \tau_i$
extends in the general point of every divisor $B \subset Y$.
By definition 
of $Y_0$ the morphism $\Gamma \setminus E \rightarrow Y$ dominates
every divisor $B \subset Y_0$. Moreover, by Proposition \ref{propositionfundamentals},3)
the general fibre over $B$ has at least one reduced component. Thus there exists
a local section of $\Gamma \setminus E \rightarrow Y$ in a general point of $B$.
Using this section we see that $\tilde \tau_i$ is bounded in a general point of $B$, hence it extends by Riemann's extension theorem.
\end{proof}

\subsection{Proof of Theorem \ref{theoremmain}}
\label{subsectionproof}

In the situation of Setup \ref{setup} we use the notation of Proposition \ref{flatness}. 
Moreover we choose $m \geq m_0$ as
in Proposition \ref{propositiontrivial}. For a $p_0 \in \N$ sufficiently divisible we know by Proposition \ref{propositiontrivial}
that the vector bundle $\sV_{p_0} \otimes \sO_{Y_0}$ is trivial. Thus we have
$$
\varphi_\star (\sO_\Gamma(p_0(L+mE))) \otimes \sO_{Y_0} \simeq ((\det \varphi_\star (\sO_\Gamma(L+mE)))^{\otimes \frac{p_0}{r}})^{\oplus r_{p_0}} \otimes \sO_{Y_0},
$$
In particular $\det \varphi_\star (\sO_\Gamma(p_0(L+mE))) \otimes \sO_{Y_0}$ is divisible by $r_{p_0}$ in the Picard group. In order to simplify the notation we replace $L$ by $p_0L$ and $mE$ by $p_0 mE$, hence, up to this change of notation, 
the line bundle $\det \varphi_\star (\sO_\Gamma(L+mE)) \otimes \sO_{Y_0}$ is divisible by $r$ and
Proposition \ref{propositiontrivial} holds for all $p \in \N$.

By \eqref{surgene} the natural map
\begin{equation} \label{symmap}
\mbox{Sym}^p (\varphi_\star (\sO_\Gamma(L+mE))) \otimes \sO_{Y_0} \rightarrow \varphi_\star (\sO_\Gamma(p(L+mE))) \otimes \sO_{Y_0}
\end{equation}
is generically surjective. Using the notation of Proposition \ref{flatness}
the twist of this map by 
$(\det \varphi_\star (\sO_\Gamma(L + m E)))^{\otimes \frac{-p}{r}}$
is the map
$$
\mbox{Sym}^p \sV_1  \otimes \sO_{Y_0} 
\rightarrow \sV_p  \otimes \sO_{Y_0}. 
$$
By Proposition \ref{propositiontrivial} these are trivial vector bundles, so
the rank of the map is constant by Remark \ref{remarkholomorphicconstant}.
Thus $\eqref{symmap}$ is surjective.

While the line bundle $L$ is globally generated and $\varphi$-big, 
the line bundle $L+mE$ is in general only $\varphi$-globally generated
on a general fibre. Hence the natural map
$$
\varphi^\star \varphi_\star (\sO_\Gamma(L+mE)) \rightarrow L+mE
$$
is not necessarily surjective. However, up to taking
a resolution of the indeterminacies $\Gamma' \rightarrow \Gamma$ by some smooth projective manifold $\Gamma'$, and replacing $L$ and $E$ by their pull-backs,
we can suppose without loss of generality that we have
$$
\varphi^\star \varphi_\star (\sO_\Gamma(L+mE)) \twoheadrightarrow M \hookrightarrow L+mE
$$ 
where $M$ is a line bundle. Pushing down to $Y$ gives morphisms
$$
\varphi_\star (\sO_\Gamma(L+mE)) \rightarrow \varphi_\star (\sO_\Gamma(M)) \hookrightarrow \varphi_\star (\sO_\Gamma(L+mE)) 
$$ 
and the composition is simply the identity. Thus the injection $\varphi_\star (\sO_\Gamma(M)) \hookrightarrow \varphi_\star (\sO_\Gamma(L+mE))$ is an isomorphism. In particular we obtain
\begin{equation} \label{mobilefix}
L+mE = M + N
\end{equation}
where $M$ is $\varphi$-globally generated and $N$ is an effective divisor
that does not surject onto $Y$. For a general $\varphi$-fibre $F$
we have 
$$
L|_F = (L+mE)|_F = (M + N)|_F = M|_F,
$$
hence $M|_F$ is very ample.

The surjective map $\varphi^\star \varphi_\star (\sO_\Gamma(M)) \twoheadrightarrow M$
defines a morphism $\Gamma \rightarrow \PP(\varphi_\star (\sO_\Gamma(M)))$,
and we denote by  $\Univ_M \subset \PP(\varphi_\star (\sO_\Gamma(M)))$ its image.
We summarise the situation and notation in a commutative diagram:
$$
\xymatrix{
\Gamma \ar[rrd]_\varphi \ar[rr]^{\psi_M} & & \Univ_M \ar[d]^{\varphi_M}  \ar @{^{(}->}[r] &   \PP(\varphi_\star (\sO_\Gamma(M))) \ar[ld]^{\pr} \\
& &
Y
}
$$ 

{\em Step 1. We prove that $\fibre{\varphi_M}{Y_0} \simeq Y_0 \times F$ with $F$ a general $\varphi$-fibre.}
For every $p \in \N$ the inclusion $pM \hookrightarrow p(L+mE)$ induces a commutative diagram
$$
\xymatrix{
\mbox{Sym}^p (\varphi_\star (\sO_\Gamma(M))) \otimes \sO_{Y_0} 
\ar[r] \ar[d]^{\simeq}
&
\varphi_\star (\sO_\Gamma(pM)) \otimes \sO_{Y_0} \ar @{^{(}->}[d]
\\
\mbox{Sym}^p (\varphi_\star (\sO_\Gamma(L+mE))) \otimes \sO_{Y_0} 
\ar@{->>}[r] 
&
\varphi_\star (\sO_\Gamma(p(L+mE))) \otimes \sO_{Y_0}
}
$$ 
The isomorphism in the left column and the surjectivity of the bottom line were established above,
so the right column is also an isomorphism.
If we set 
$$
G :=(\det \varphi_\star (\sO_\Gamma(L+mE)) \otimes \sO_{Y_0} )^{\otimes \frac{1}{r}} \simeq  (\det \varphi_\star (\sO_\Gamma(M)) \otimes \sO_{Y_0})^{\otimes \frac{1}{r}},
$$ 
we obtain by Proposition \ref{propositiontrivial} that 
\begin{equation}\label{ideovery}
\varphi_\star (\sO_\Gamma(pM)) \otimes \sO_{Y_0}  \simeq (G \otimes \sO_{Y_0})^{\oplus r_p}
\end{equation}
for every $p \in\mathbb{N}$.

Denote by $\sO_{\PP(\varphi_\star (\sO_\Gamma(M)))}(1)$ the tautological bundle on $\PP(\varphi_\star (\sO_\Gamma(M)))$,
and by $\mathcal{I}_{\Univ_M}$ the ideal sheaf of $\Univ_M \subset  \PP(\varphi_\star (\sO_\Gamma(M)))$.
For $p \in \N$ sufficiently high, the 
direct image $\pr_\star (\mathcal{I}_{\Univ_M}(p))$ is $\pr$-globally generated,
and the higher direct images $R^j \pr_\star (\mathcal{I}_{\Univ_M}(p))$ vanish.
Thus we have an exact sequence
\begin{equation} \label{pushidealsheaf}
0
\rightarrow \pr_\star (\mathcal{I}_{\Univ_M}(p))
\rightarrow \pr_\star (\sO_{\PP(\varphi_\star (\sO_\Gamma(M)))}(p)) \simeq \mbox{Sym}^p (\varphi_\star (\sO_\Gamma(M)))
\rightarrow (\varphi_M)_\star (\mathcal{O}_{\Univ_M} (p)) 
\rightarrow 
0,
\end{equation}
where $\mathcal{O}_{\Univ_M} (p)$ denotes the restriction of the tautological bundle to $\Univ_M$.
By construction we have $M = \psi_M ^\star \sO_{\PP(\varphi_\star (\sO_\Gamma(M)))}(1)$, so by the projection formula
there is a natural inclusion
\begin{equation} \label{notnormal}
(\varphi_M)_\star (\mathcal{O}_{\Univ_M}(p)) \rightarrow \varphi_\star (\sO_\Gamma(pM)).
\end{equation}
This morphism is an isomorphism in the generic point, since $\psi_M$ is an isomorphism on the general $\varphi$-fibre.
Twisting \eqref{pushidealsheaf} and \eqref{notnormal} with $G^{-p}$ we obtain morphisms
$$
\mbox{Sym}^p \varphi_\star (\sO_\Gamma(M))  \otimes G^{-p} 
\rightarrow
(\varphi_M)_\star (\mathcal{O}_{\Univ_M} (p)) \otimes G^{-p} 
\rightarrow 
\varphi_\star (\sO_\Gamma(pM)) \otimes G^{-p}.
$$
By \eqref{ideovery} the restriction of the first and the last sheaf to $Y_0$ are trivial vector bundles, so
by Remark \ref{remarkholomorphicconstant} the rank of the map is constant.
In particular the inclusion $(\varphi_M)_\star (\mathcal{O}_{\Univ_M}(p)) \rightarrow \varphi_\star (\sO_\Gamma(pM))$
is an isomorphism on $Y_0$. Hence
$$
(\varphi_M)_\star (\mathcal{O}_{\Univ_M}(p)) \otimes G^{-p} \otimes \sO_{Y_0}
$$
is a trivial vector bundle, and by the exact sequence \eqref{pushidealsheaf}
$$
\pr_\star (\mathcal{I}_{\Univ_M}(p)) \otimes G^{-p} \otimes \sO_{Y_0}
$$
is also trivial. Finally, this proves that the equations defining $\Univ_M$ in the product 
$\PP(\varphi_\star (\sO_\Gamma(M) \otimes \sO_{Y_0}) \simeq Y_0 \times \PP^{r-1}$ are constant,
hence $\Univ_M$ is a product.

{\em Step 2. We prove that the tangent bundle of $\fibre{\varphi}{Y_0} \cap (\Gamma \setminus E)$ splits.}
Recall that by \eqref{mobilefix} we have $L+mE = M + N$ with $N$ an effective divisor corresponding to 
$\varphi$-fixed components of the $L+mE$.
The line bundle $L$ is $\varphi$-globally generated, so any relative fixed component of $L+mE$ is 
is contained in $mE$. Thus $mE - N$ is an effective divisor.

Now observe the following: given any fibre $\fibre{\varphi}{y}$, let $F_0$ 
be an irreducible component of $\fibre{\varphi}{y}$ that is not contained in $E$. 
Then $F_0$ maps birationally 
onto its image in $X$, so $L|_{F_0} = (\pi^\star A)|_{F_0}$ is nef and big. Since 
$$
M|_{F_0} = L|_{F_0} + (mE-N)|_{F_0}
$$
we obtain that $M|_{F_0}$ is also big. Thus $F_0$ is not contracted by $\psi_M$.

Consider now the birational morphism 
$$
\psi_M^0: \fibre{\varphi}{Y_0} \rightarrow \fibre{\varphi_M}{Y_0} \simeq Y_0 \times F.
$$
Since $Y_0 \times F$ is smooth, the $\psi_M^0$-exceptional locus has pure codimension one, and we denote by
$D$ one of its irreducible components. Since $\fibre{\varphi}{Y_0} \rightarrow Y_0$ is flat, the image $\varphi(D)$
is a divisor, and we denote by $F_0$ an irreducible component of a general fibre of $D \rightarrow \varphi(D)$.
If $F_0 \not\subset E$, then we have just shown that $F_0$ is not contracted by $\psi_M$. Thus $D$ would not be
contracted by $\psi_M^0$. This shows that the exceptional locus of $\psi_M^0$ is contained in $E$.
In particular 
\begin{equation} \label{opensplitting}
T_{\fibre{\varphi}{Y_0} \cap (\Gamma \setminus E)} \simeq V_1^0 \oplus V_2^0,
\end{equation}
where $V_1^0$ (resp. $V_2^0$) is the pull-back of $T_{Y_0 \times F/Y_0}$ (resp. $T_{Y_0 \times F/F}$)
under the embedding 
$$
\fibre{\varphi}{Y_0} \cap \Gamma \setminus E \hookrightarrow \fibre{\varphi_M}{Y_0} \simeq Y_0 \times F.
$$
Note that $V_1^0$ and $V_2^0$ are both algebraically integrable foliations and the general leaves of $V_1^0$ 
are $\varphi$-fibres.

{\em Step 3. Splitting of $T_X$ and conclusion.}
Since $\Gamma \setminus E \rightarrow X$ is an embedding, we can identify
$\fibre{\varphi}{Y_0} \cap (\Gamma \setminus E)$ to a Zariski open subset $X_0$ of $X$, and we denote by
$j: X_0 \hookrightarrow X$
the inclusion. Then
$$
V_1 := (j_\star(V_1^0))^{**}, \qquad V_2 := (j_\star(V_2^0))^{**}
$$
are reflexive sheaves on $X$. By \eqref{opensplitting} we have
$$
(V_1 \oplus V_2) \otimes \sO_{X_0} = T_{X_0}.
$$
By Proposition \ref{propositionfundamentals},2) the complement of $X \setminus X_0$
has codimension at least two.
Thus the equality above extends by Proposition \ref{facts},3) to an isomorphism of reflexive sheaves
$$
V_1 \oplus V_2 \simeq T_X.
$$
Yet this implies that the upper semicontinuous functions $\dim \left( V_{j,x} \otimes_{\sO_{X,x}} \C(x) \right)$ are constant,
since their sum is equal to $\rk T_X$. Thus $V_1$ and $V_2$ are locally free \cite[II,Ex.5.8.c)]{Har77}
and, identifying $V_1$ and $V_2$ to their images in $T_X$, they define regular foliations on $X$.
We have seen above that a general $V_1$-leaf is a general fibre of the MRC-fibration, hence rationally connected. 
By \cite[Cor.2.11]{a1} this implies that there exists a smooth morphism $X \rightarrow Y$ such
that $T_{X/Y}=V_1$. Since $T_X = V_1 \oplus V_2$, the relative tangent sequence of this submersion is split by $V_2$.
Since $V_2$ is integrable, the classical Ehresmann theorem (e.g. \cite[V.,\S 2,Prop.1 and Thm.3]{CLN85}) implies that
the universal cover of $X$ is a product. Yet $X$ is simply connected, so $X$ itself is a product.
$\square$

\begin{proof}[Proof of Theorem \ref{theoremMRC}]

If $X$ is not uniruled, then $K_X \equiv 0$ by \cite{BDPP13}. Thus we can suppose that $X$ is uniruled, and denote
by $\sF \subset T_X$ the unique integrable saturated subsheaf such that for a very general point $x \in X$, the $\sF$-leaf
through $x$ is a fibre of the MRC-fibration. Note that a saturated subsheaf is reflexive.

{\em Step 1. We prove that $\sF \subset T_X$ is a subbundle.}
Note first that if $\holom{f}{X'}{X}$ is a finite \'etale cover, then, since $f^\star$ is exact for flat morphisms, the pull-back $f^\star \sF \subset f^\star T_X = T_{X'}$ is the unique integrable saturated subsheaf corresponding to the MRC-fibration of $X'$. Moreover if $f^\star \sF \subset T_{X'}$
is a subbundle, then $\sF \subset T_X$ is a subbundle. Thus the claim is invariant under finite \'etale covers,
in particular it holds for manifolds with finite fundamental group by Theorem \ref{theoremmain}.

By \cite[Cor.1.4]{Cao16} we know that, maybe after finite \'etale cover, we have a locally trivial Albanese map
$\alpha: X \rightarrow \mbox{Alb}(X)$ such that the fibre $G$ has finite fundamental group. Since $\Alb(X)$ is not uniruled,
we have a factorisation
$$
\sF  \subset T_{X/\mbox{Alb}(X)}  \subset T_X.
$$
Fix any point $y \in \mbox{Alb}(X)$ and choose
an analytic neighborhood $U \subset \mbox{Alb}(X)$ such that $\fibre{\alpha}{U} \simeq U \times G$.
Since $G$ has finite fundamental group, we already know 
that the MRC-fibration can be represented by $U \times G \rightarrow U \times Z$, where $G \rightarrow Z$ is a smooth fibration.
By uniqueness of $\sF|_{\fibre{\alpha}{U}}$ this shows that $\sF|_{\fibre{\alpha}{U}} \subset T_X|_{\fibre{\alpha}{U}}$
is a subbundle.

{\em Step 2. Structure of MRC-fibration.} By Step 1 we know that $\sF \subset T_X$ is a regular foliation.
Since it has a rationally connected leaf, we know by \cite[Cor.2.11]{a1} 
that there exists a smooth morphism $\varphi: X \rightarrow Y$ such that $T_{X/Y}=\sF$. Since $-K_X$ is nef and $\varphi$ is smooth,
we know that $-K_Y$ is nef \cite[Cor.3.15]{Deb01}. Since $Y$ is not uniruled \cite{GHS03}, this proves that $K_Y \equiv 0$.

We claim that $\varphi$ is locally trivial. By the Fischer-Grauert theorem \cite{FG65} it is enough to show that all the fibres are biholomorphic. Note again that this statement is invariant under \'etale cover: rationally connected manifolds are simply connected, so
a $\varphi$-fibre lifts to a $\varphi'$-fibre if $X' \rightarrow X$ is \'etale. Thus by 
\cite[Cor.1.4]{Cao16} we reduce to the case where we have a locally trivial Albanese map
$\alpha: X \rightarrow \mbox{Alb}(X)$ such that the fibre $G$ has finite fundamental group.
By invariance under \'etale covers and Theorem \ref{theoremmain} we know that the MRC-fibration of $G$ is locally trivial,
so $\varphi$-fibres being contained in the same $\alpha$-fibre are biholomorphic. Yet $\alpha$ is locally trivial, and automorphisms
preserve the MRC-fibration, so all the $\varphi$-fibres are biholomorphic.
\end{proof}

\begin{appendix}
\section{Some analytic results}

To begin with, we first prove an approximation lemma which is a direct consequence  of \cite[thm 1.1]{Dem92} 
and a standard gluing lemma \cite[section 3]{Dem92}.

\begin{lemma}\label{approx}
Let $C$ be a $1$-dimensional projective manifold and let $E$ be a vector bundle of rank $n$ on $C$. 
Let $\pi : \mathbb{P} (E) \rightarrow C$ be the natural projection. 
Let $T =\alpha +dd^c \psi \geq 0$ be a positive current of analytic singularity, where $\alpha$ is smooth form and $\psi$ is a quasi-psh function.
We suppose moreover that $\psi$  is smooth over the generic fiber of $\pi$.

Then for every $c>0$, there exists a sequence of closed almost positive $(1,1)$-currents $T_{c , k} =\alpha +dd^c \psi_{c, k}$ 
such that $\psi_{c,k}$ is smooth on $\mathbb{P} (E) \setminus E_c (T)$, decreases to $\psi$ as $k$ tends to $+\infty$, and
$$T_{c , k} \geq -\ep_k \omega ,$$
where $\lim\limits_{k\rightarrow +\infty}\ep_k  = 0$ and $\nu (T_{c , k}, x) = (\nu (T, x) -c )_+$ at every point $x\in X$. 
Here $E_c (T) :=\{x | \text{ } \nu (T,x ) \geq c \}$ and $(\nu (T, x) -c )_+ :=\max \{\nu (T, x) -c, 0\}$.
\end{lemma}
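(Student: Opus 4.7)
The plan is to deduce Lemma \ref{approx} from Demailly's regularization procedure, combining the local Bergman kernel construction of \cite[Thm 1.1]{Dem92} with the regularized maximum gluing of \cite[Sec 3]{Dem92}, applied to the current $T$ on the compact K\"ahler manifold $\mathbb{P}(E)$. The analytic singularity assumption on $\psi$ is precisely what makes the pointwise Lelong number identity $\nu(T_{c,k}, x) = (\nu(T, x) - c)_+$ attainable rather than merely asymptotic.

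First I would cover $\mathbb{P}(E)$ by finitely many Stein open sets $U_i$ on which $\alpha = dd^c \varphi_i$ for a smooth potential $\varphi_i$, so that $u_i := \psi + \varphi_i$ is psh on $U_i$. For each integer $k$, let $\{\sigma_\ell^{(i)}\}$ be an orthonormal basis of the Hilbert space of holomorphic functions on $U_i$ that are $L^2$ for the weight $e^{-2k u_i}$, and set
$$
\psi_k^{(i)}(z) := \frac{1}{2k} \log \sum_\ell |\sigma_\ell^{(i)}(z)|^2 - \varphi_i(z).
$$
Standard H\"ormander $L^2$-estimates together with the Ohsawa--Takegoshi extension theorem give that $\psi_k^{(i)}$ is smooth on $\{u_i > -\infty\}$, satisfies $dd^c \psi_k^{(i)} + \alpha \geq -\epsilon_k \omega$ with $\epsilon_k \to 0$, and has Lelong numbers exactly $(\nu(\psi, x) - n/k)_+$ at every $x$ (this exactness uses the analytic singularities). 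For the parameter $c$, I would work along a subsequence with $n/k \leq c$ and apply the standard weight truncation so that the effective Lelong cutoff becomes $c$ rather than $n/k$.

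The local approximations are then glued into a global potential via Demailly's regularized maximum $\max_\eta$: choose a refinement $V_i \Subset U_i$ and additive constants $A_i$ so that on each $V_i$ the local approximation $\psi_k^{(i)} + A_i$ strictly dominates its neighbors near $\partial V_i$, and set $\psi_{c,k} := \max_\eta \{\psi_k^{(i)} + A_i\}_i$. The regularized maximum is quasi-psh, is smooth on any open set where a single argument strictly dominates, preserves Lelong numbers pointwise (since all local approximations have the same Lelong numbers, the min over indices is unchanged), and loses only an arbitrarily small further amount in the curvature bound. In particular $\psi_{c,k}$ is smooth outside the common singular locus $E_c(T)$ and satisfies $dd^c \psi_{c,k} + \alpha \geq -\epsilon_k' \omega$ with $\epsilon_k' \to 0$.

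The main obstacle will be to arrange simultaneously the decrease $\psi_{c,k} \searrow \psi$ and the exact pointwise Lelong identity. The decrease is handled classically by extracting a sparse subsequence $k_j$ and renormalising by additive constants tending to zero, so that $\psi_{c,k_{j+1}} \leq \psi_{c,k_j}$ holds globally while the potentials converge in $L^1_{\rm loc}$ to $\psi$, as in \cite[Sec 3]{Dem92}. The pointwise identity genuinely needs the analytic singularity hypothesis, since it forces $\nu(\psi, \cdot)$ to be locally constant and rational along the components of the polar set, and hence to be exactly attained after truncation by the Bergman approximations for $k$ in a suitable arithmetic progression. The extra hypothesis that $\psi$ is smooth on the generic $\pi$-fibre plays no direct role in the construction; it is preserved automatically because $E_c(T)$ does not meet such a fibre.
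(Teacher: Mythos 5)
Your proposal reaches a valid proof, but the route differs from the paper's in a structurally important way: you cover $\PP(E)$ itself by Stein open sets and then re-run Demailly's global argument (local Bergman kernel approximations plus regularized maximum with additive constants), i.e.\ you essentially reprove \cite[Thm 1.1]{Dem92} on the compact projective manifold $\PP(E)$. The paper instead covers the one-dimensional base $C$ by Stein opens $U_i$, invokes the construction of \cite[Thm 1.1]{Dem92} directly on each $\pi^{-1}(U_i)$ (the fibre direction being handled by the relative ampleness of $\sO_{\PP(E)}(1)$), and exploits the hypothesis that $\psi$ is smooth on a generic $\pi$-fibre so that the cover can be chosen with $T$ smooth on every transition region $\pi^{-1}(U_i\cap U_j)$. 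With that choice the gluing becomes elementary: the local approximations converge \emph{uniformly in $C^0$} on the overlaps, and \cite[Lemma 3.5]{Dem92} applies with only a negligible loss. Your approach foregoes this simplification and instead must carefully track that the regularized maximum neither degrades the Lelong numbers near singular overlaps nor destroys the $\ep_k\to 0$; this is exactly the delicate bookkeeping that \cite{Dem92} already performs, so nothing is wrong, but you are redoing that machinery rather than deducing the lemma as a ``direct consequence,'' which is the economy the fibration hypothesis buys the paper.

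Two of your side remarks should be corrected. First, the exact Lelong identity $\nu(T_{c,k},x)=(\nu(T,x)-c)_+$ does \emph{not} rely on $T$ having analytic singularities: Demailly's Bergman approximation yields this pointwise formula for an arbitrary quasi-psh weight, analytic singularities or not. The analytic singularity hypothesis in the statement is there because it is the case that occurs in the application (Proposition \ref{lelongnumcont}), not because it is needed for this step. Second, you assert that the smoothness of $\psi$ on generic $\pi$-fibres ``plays no direct role''; this is true for your construction but is precisely the hypothesis that the paper's gluing argument rests upon, since it guarantees the Stein cover of $C$ can be chosen to avoid singular fibres on all overlaps.
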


\begin{proof}
As $\psi$  is smooth over the generic fiber of $\pi$, we can take a Stein cover $\{ U_i\}$ of $C$ such that $T$ is smooth over $\pi^{-1} (U_i \cap U_j)$ for every $i \neq j$.
Over $\pi^{-1} (U_i)$, since $T_{\mathbb{P} (E) / C}$ is relatively ample, we can find a smooth metric on $\mathcal{O} _{\mathbb{P}  (T_{\pi^{-1} (U_i)})} (1)$ such that the curvature form is positive.
By applying  \cite[thm 1.1]{Dem92}, we can find a sequence of closed almost positive $(1,1)$-currents $T_{c , k, i} =\alpha +dd^c \psi_{c,k,i}$ on $\pi^{-1} (U_i)$, such that $\psi_{c,k,i}$ is smooth on $\pi^{-1} (U_i)\setminus E_c (T)$, decreases to $\psi |_{\pi^{-1} (U_i)}$ as $k$ tends to $+\infty$, and
$$T_{c , k, i} \geq -\ep_k \omega \qquad\text{on }\pi^{-1} (U_i) ,$$
where $\ep_k \rightarrow 0$ and $\nu (T_{c , k,i}, x) = (\nu (T, x) -c )_+$ at every point $x\in \pi^{-1} (U_i)$.

\medskip

Since $\psi$ is smooth over $\pi^{-1} (U_i \cap U_j)$ for every $i \neq j$, for any set $V \Subset  U_i \cap U_j$, we know that 
$\lim\limits_{k\rightarrow +\infty} \|\psi_{c,k,i} -\psi_{c,k,j}\|_{\mathcal{C}^0 (\pi^{-1}(V))} =0$. 
By applying the standard gluing process \cite[lemma 3.5]{Dem92}, we can glue $\{T_{c , k,i}\}_i$ together by losing a bit of positivity.
The lemma is thus proved.
\end{proof}

Thanks to Lemma \ref{approx}, we can get an upper estimate of the Lelong numbers of $T$ by using the same arguments in \cite[thm 1.7]{Dem92}.
\begin{proposition}\label{lelongnumcont}
In the setting of Lemma \ref{approx}, set $a := \max_{x\in \mathbb{P} (E)} \nu (T,x)$. 
Then there exists a $k\in \{1, \cdots , n\}$, a closed $(k,k)$-positive current $\Theta_k$ in 
the same class of $( c_1 (T))^k \in H^{k,k} (\mathbb{P} (E))$ 
and a $k$-codimensional subvariety $Z$ contained in some fiber of $\pi$ such that 
$$
\Theta_k \geq (\frac{a}{n})^k \cdot [Z] 
$$
in the sense of current.
\end{proposition}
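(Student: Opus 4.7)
The plan is to adapt the classical Demailly strategy of iterated self-intersection combined with Siu's decomposition, using Lemma \ref{approx} to make precise the required wedge products.

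A preliminary geometric observation simplifies matters: since $\psi$ has analytic singularities and is smooth on $\pi^{-1}(C\setminus F)$ for some finite set $F\subset C$, the polar locus of $T$ is contained in $\bigcup_{c\in F}\pi^{-1}(c)$. Hence every irreducible component of every Lelong super-level set of $T$, and of any iterated self-intersection constructed below, is automatically contained in a single fibre of $\pi$. This takes care of the ``$Z$ contained in some fibre'' requirement at every stage.

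Fix a point $x_0\in\mathbb{P}(E)$ with $\nu(T,x_0)=a$. For each $k\in\{1,\ldots,n\}$ (with $n=\dim\mathbb{P}(E)$), I would construct a closed positive $(k,k)$-current $\Theta_k$ in the cohomology class $(c_1(T))^k$ as a weak cluster point of the Bedford-Taylor wedge powers $(T_{c,\ell}+\delta_\ell\omega)^{\wedge k}$ of the (almost positive) analytic-singularity regularizations $T_{c,\ell}$ provided by Lemma \ref{approx}, letting first $\ell\to\infty$ and then $c\to0$. The small shift $\delta_\ell\omega$ with $\delta_\ell\to 0$ restores strict positivity at each stage while contributing a cohomological correction that disappears in the limit. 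Demailly's comparison inequality for Lelong numbers under wedge products then yields
$$\nu(\Theta_k,x_0)\;\geq\;\nu(T,x_0)^k\;=\;a^k.$$
Next, apply Siu's decomposition at codimension $k$ to $\Theta_k$:
$$\Theta_k\;=\;\sum_j\mu_j^{(k)}\bigl[Z_j^{(k)}\bigr]+R^{(k)},$$
where the $Z_j^{(k)}$ are irreducible codimension-$k$ subvarieties of $\mathbb{P}(E)$, each contained in a single fibre by the initial observation. Stop at the first $k$ for which some Siu coefficient $\mu_{j_0}^{(k)}\geq(a/n)^k$, and take $Z:=Z_{j_0}^{(k)}$ as the required subvariety.

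Termination is automatic at $k=n$: there $\Theta_n$ is a closed positive measure whose atomic mass at $x_0$ equals $\nu(\Theta_n,x_0)\geq a^n$, hence $\Theta_n\geq a^n\cdot[\{x_0\}]\geq(a/n)^n\cdot[\{x_0\}]$, and $\{x_0\}$ is a codimension-$n$ subvariety sitting inside the fibre through $x_0$. So the conclusion always holds, at worst with $k=n$; a smaller $k$ may work if a Siu component of sufficient weight emerges earlier. The main technical obstacle is to justify simultaneously (i) the existence of $\Theta_k$ as a closed positive $(k,k)$-current in the correct cohomology class via subsequential weak limits of the regularized Bedford-Taylor powers, and (ii) the Demailly Lelong-number comparison $\nu(\Theta_k,x_0)\geq a^k$ for that limit. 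Both are consequences of Demailly's intersection theory for currents with analytic singularities, and Lemma \ref{approx} produces exactly the regularizations that theory requires.
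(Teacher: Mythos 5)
Your proposal has a genuine gap, and it is exactly the technicality that the jumping-number bookkeeping in the paper's proof is designed to overcome.

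The currents $T_{c,\ell}$ produced by Lemma~\ref{approx} are {\em not} smooth everywhere: they remain singular along the Lelong super-level set $E_c(T)$, and adding $\delta_\ell\omega$ does not change that. Demailly's intersection theory only defines the Bedford--Taylor wedge $T_1\wedge\cdots\wedge T_k$ for currents with analytic singularities when the unbounded loci have codimension compatible with the number of factors; in particular, $(T_{c,\ell})^{\wedge k}$ is only meaningful when $\codim E_c(T)\geq k$, i.e.\ when $c> b_k$ (the $k$-th jumping value). Taking $\ell\to\infty$ and then $c\to 0$, as you propose, passes beyond this regime as soon as $T$ has singular components of codimension $<k$, and the wedge ceases to exist. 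Consequently the Lelong-number comparison you invoke cannot produce $\nu(\Theta_k,x_0)\geq a^k$: the most one can extract from a well-defined wedge with factors $T_{c,\ell}$, $c>b_k$, is a lower bound like $(a-c)^k$, and there is no uniform way to make $a-c$ comparable to $a$ without further information. The paper circumvents this precisely by the induction in \eqref{lelongnum}: at step $p$ it wedges $\Theta_{p-1}$ with $T_{c,\ell}$ for $c$ slightly above $b_p$, so the singular locus has codimension $\geq p$; the price is that each factor contributes only $b_{k+1}-b_p$ (not $a$) to the Lelong number, and the telescoping choice of a jump $b_{k+1}-b_k\geq a/n$ is what salvages the $(a/n)^k$ bound in the end. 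Your ``stop at the first $k$ where a Siu coefficient is $\geq (a/n)^k$, with $k=n$ as a fallback'' does not repair this, because the fallback current $\Theta_n$ with mass $\geq a^n$ at $x_0$ is exactly what cannot be constructed when $T$ has higher-dimensional singularities.

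In short: the strategy ``wedge $T$ with itself $k$ times, apply Demailly's lower bound with factor $a$ each time'' is cohomologically and analytically untenable in general, and the choice of $k$ via the jumping values $0=b_1\leq\cdots\leq b_{n+1}=a$, together with restricting $c>b_p$ at each inductive wedge, is not an optional refinement but the core of the proof.
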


\begin{proof}
Thanks to Lemma \ref{approx}, the proposition can be proved by the same argument as in \cite[thm 1.7]{Dem92}. For the convenience of readers, we give the proof here.

We follow the notations introduced \cite[thm 1.7]{Dem92}: let $0=b_1 \leq b_2 \cdots \leq b_n \leq b_{n+1}$ 
be the sequence of
''jumping values'' $b_p$, namely $\codim E_c (T) =p$ when $c\in (b_p, b_{p+1} ]$.
Here, $\codim E_c (T) =p$ means that all components of $E_c (T)$ is of codimension  $\geq p$ and at least some component
of  $E_c (T)$ is of codimension $p$.
By the definition of $a$, there exists a $b_k$ such that $b_{k+1} - b_k \geq \frac{a}{n}$.
Let $Z$ be a $k$-codimensional component of $E_{b_{k+1}} (T)$ such that the generic Lelong of $T$ along $Z$ is $b_{k+1}$.
Since $\psi$ is smooth on the generic fiber of $\pi$, $Z$ is contained in some fiber of $\pi$.

\medskip

We now prove by induction the following claim: for every $1 \leq p \leq k$, we can find $(p,p)$-positive current $\Theta_p$ in 
the same class of $(c_1 (T))^k$ such that 
\begin{equation}\label{lelongnum}
 \nu (\Theta_p , x) \geq (b_{k+1} -b_1) \cdots (b_{k+1} -b_p) \qquad\text{for every }x\in Z.
\end{equation}

For $p=1$, we can take $\Theta_1 = T$ and \eqref{lelongnum} holds. Now, suppose that $\Theta_{p-1}$ has been constructed. For $c > b_p$, the current $T_{c,k}$ 
produced by Lemma \ref{approx} satisfies $T_{c,k} \geq -\ep_k \omega$ and the singular locus of $T_{c,k}$ is contained in $E_c (T)$
which is of codimension at least $p$.
Then \cite[Lemma 7.4]{Dem92} shows that $\Theta_{p-1} \wedge (T_{c,k} +\ep_k\omega)$ is a well defined positive current.
We define 
$$\Theta_p =\lim_{c \rightarrow b_p +0} \lim_{k\rightarrow +\infty} \Theta_{p-1} \wedge (T_{c,k} +\ep_k\omega)$$
possibly after extracting some weakly convergent subsequence. Moreover, we have
$$\nu (\Theta_p , x) \geq \nu (\Theta_{p-1} , x) \times \varlimsup_{c\rightarrow b_p +0} \varlimsup_{k\rightarrow +\infty} \nu (T_{c,k} ,x) \geq 
 \nu (\Theta_{p-1} , x) \times (\nu (T, x) -b_p)_+ .$$
In particular, for every $x\in Z$, we have $ \nu (\Theta_p , x) \geq (b_{k+1} -b_1) \cdots (b_{k+1} -b_p)$.  \eqref{lelongnum} is proved.

\medskip

We now prove the proposition. Thanks to \eqref{lelongnum}, we can find a closed $(k,k)$-positive current $\Theta_k$ in 
the same class of $(c_1 (T))^k$ such that 
$$
\nu (\Theta_k ,x) \geq (b_{k+1}-b_1)\cdots (b_{k+1} -b_k) \geq  (b_{k+1} -b_k)^k  \geq (\frac{a}{n})^k  \qquad\text{for every }x\in Z .
$$
As $Z$ is of codimension $k$, Siu's decomposition theorem implies 
$$\Theta_k \geq (\frac{a}{n})^k \cdot [Z]$$
in the sense of current. The proposition is proved.
\end{proof}
\end{appendix}


\end{document}